\definecolor{ddmagenta}{rgb}{0.7,0,1.0}
\definecolor{ddcyan}{rgb}{0,0.1,1.0}
\definecolor{dred}{rgb}{.8,0,0}
\definecolor{ddgreen}{rgb}{0,0.4,0.4}
\newcommand{\RR}{\mathbb{R}}
\newcommand{\bele}{\begin{lemm}}
\newcommand{\enle}{\end{lemm}}
\newcommand{\bedef}{\begin{defi}}
\newcommand{\bete}{\begin{teor}}
\newcommand{\eddef}{\end{defi}}
\newcommand{\ente}{\end{teor}}
\newcommand{\beos}{\begin{osse}}
\newcommand{\eddos}{\end{osse}}
\newcommand{\bepr}{\begin{prop}}
\newcommand{\empr}{\end{prop}}
\newcommand{\bepro}{\begin{prob}}
\newcommand{\empro}{\end{prob}}
\newcommand{\bede}{\begin{defin}}
\newcommand{\edde}{\end{defin}}
\newcommand{\beco}{\begin{coro}}
\newcommand{\enco}{\end{coro}}
\newcommand{\beeq}[1]{\begin{equation}
 \label{#1}}
\newcommand{\eddeq}{\end{equation}}
\newcommand{\beeqa}[1]{\begin{eqnarray}
  \label{#1}}
\newcommand{\eddeqa}{\end{eqnarray}}
\newcommand{\beal}[1]{\begin{align}
 \label{#1}}
\newcommand{\eddal}{\end{align}}
\newcommand{\bespl}[1]{\begin{split}
 \label{#1}}
\newcommand{\edspl}{\end{split}}
\newcommand{\bega}[1]{\begin{gather}
 \label{#1}}
\newcommand{\edga}{\end{gather}}
\newcommand{\beeqax}{\begin{eqnarray*}}
\newcommand{\eddeqax}{\end{eqnarray*}}
\newcommand{\teta}{\vartheta}
\newcommand{\dt}{\partial_t}
\newcommand{\uu}{{\bf u}}
\newcommand{\io}{\int_\Omega}
\newcommand{\eps}{\varepsilon}
\newcommand{\weak}{\rightharpoonup}
\newcommand{\weakstar}{\mathop{\rightharpoonup}^{*}}
\DeclareMathOperator{\sign}{sign}
 \DeclareMathOperator{\semidist}{e}
\DeclareMathOperator{\distance}{d}
\let\TeXchi\chi
\def\chi{{\setbox0 \hbox{\mathsurround0pt
$\TeXchi$}\hbox{\raise\dp0 \copy0 }}}
\newtheorem{maintheorem}{Theorem}
\newtheorem{theorem}{Theorem}[section]
\newtheorem{lemma}{Lemma}[section]
\newtheorem{proposition}[lemma]{Proposition}
\newtheorem{definition}[lemma]{Definition}%
\newtheorem{notation}[lemma]{Notation}%
\newtheorem{remark}[lemma]{Remark}%
\newtheorem{mainproblem}{Problem}
\begin{document}

\newcommand{\Q}{\Omega \times (0,T)}
\newcommand{\Qt}{\Omega \times (0,t)}
\newcommand{\Qtau}{\Omega \times (0,\tau)}
\newcommand{\til}{\widetilde}
\renewcommand{\part}{\partial_t}
\renewcommand{\teta}{\theta}
\newcommand{\accaunoz}{H^1_0(\Omega)}
\newcommand{\accadue}{H^2(\Omega)}
\newcommand{\nnu}{\nonumber}
\newcommand{\irre}{\rho}
\newcommand{\vinc}{\beta}
\newcommand{\weaksto}{{\stackrel{*}{\rightharpoonup}\,}}
\newcommand{\weakto}{\rightharpoonup}
\newcommand{\debole}{\,\weak\,}
\newcommand{\debolestar}{\,\weakstar\,}
\newcommand{\pairing}[4]{ \sideset{_{#1 }}{_{ #2}}  {\mathop{\langle #3 , #4  \rangle}}}
\newcommand{\comments}[1]{\marginpar{\scriptsize\textit{#1}}}

 \def\fin{\hfill
         \trait .3 5 0
         \trait 5 .3 0
         \kern-5pt
         \trait 5 5 -4.7
         \trait 0.3 5 0
 \medskip}
 \def\trait #1 #2 #3 {\vrule width #1pt height #2pt depth #3pt}
\newcommand{\forae}{\text{for a.a.}}
\newcommand{\aein}{\text{a.e.\ in}}

\newcommand{\down}{\searrow}
\newcommand{\up}{\to}

\newcommand{\R}{\mathbb{R}}
\newcommand{\N}{\mathbb{N}}
\newcommand{\Z}{\mathbb{Z}}
\newcommand{\C}{\mathbb{C}}
\newcommand{\M}{\mathbb{M}}
\newcommand{\F}{\mathbb{F}}

\newcommand{\piecewiseConstant}[2]{\overline{#1}_{\kern-1pt#2}}
\newcommand{\pwC}{\piecewiseConstant}
\newcommand{\underlinepiecewiseConstant}[2]{\underline{#1}_{\kern-1pt#2}}
\newcommand{\upwC}{\underlinepiecewiseConstant}

\newcommand{\piecewiseLinear}[2]{{#1}_{\kern-1pt#2}}
\newcommand{\pwL}{\piecewiseLinear}
\newcommand{\pwM}[2]{\widetilde{#1}_{\kern-1pt#2}}
 \def\trait #1 #2 #3 {\vrule width #1pt height #2pt depth #3pt}
\newcommand{\pwN}[2]{#1_{\kern-1pt#2}}
 \def\trait #1 #2 #3 {\vrule width #1pt height #2pt depth #3pt}
\newcommand{\la}{\langle}
\newcommand{\ra}{\rangle}
\newcommand{\uk}{\pwN {\uu^k}{\tau}}
\newcommand{\un}{\pwN {\uu^n}{\tau}}
\newcommand{\Fn}{\pwN {J^n}{\tau}}
\newcommand{\uku}{\pwN {\uu^{k-1}}{\tau}}
\newcommand{\unu}{\pwN {\uu^{n-1}}{\tau}}
\newcommand{\chin}{\pwN {\chi^n}{\tau}}
\newcommand{\Fun}{\pwN {\mathbf{F}^n}{\tau}}
\newcommand{\gun}{\pwN {\mathbf{g}^n}{\tau}}
\newcommand{\chinuno}{\pwN {\chi^{n+1}}{\tau}}
\newcommand{\dom}{\text{dom}}

\newcommand{\ue}{\uu_{\varepsilon}}
\newcommand{\uet}{\uu_{\varepsilon t}}
\newcommand{\uek}{\uu_{\varepsilon_k}}
\newcommand{\uekt}{\uu_{\varepsilon_{k} t}}

\newcommand{\mo}{m}
\newcommand{\mw}{\mathcal{Z}}
\newcommand{\mn}{\mathcal{N}}
\newcommand{\mv}{\mathcal{V}}
\newcommand{\mh}{\mathcal{H}}
\newcommand{\mvi}{\mathcal{V'}}
\newcommand{\mwi}{\mathcal{Z'}}
\newcommand{\dd}{\mathrm{d}}
\newcommand{\h}{|_{H}}
\newcommand{\va}{\|_{V}}
\newcommand{\vi}{\|_{V'}}
\newcommand{\sobneg}[2]{\mathcal{W}^{#1,#2}(\Omega)}
\newcommand{\CC}{\mathrm{C}}
\newcommand{\uchi}{\underline{\chi}}
\newcommand{\uuu}{\underline{w}}
\newcommand{\uf}{\underline{G}}
\newcommand{\ug}{\overline{G}}
\newcommand{\uchio}{\underline{\chi}_{0}}

\newcommand{\ene}{\mathcal{E}}
\newcommand{\cx}{X}
\newcommand{\dcx}{\distance_X}
\newcommand{\ddd}{\, \mathrm{d}}
 \newcommand{\sfl}{\mathcal{S}}
 \newcommand{\geneset}{\mathcal{S}}
\newcommand{\att}{\mathcal{A}}
\newcommand{\rest}{Z(\sfl)}
\newcommand{\app}{\mu}
\newcommand{\chimd}{\chi}
\newcommand{\wmd}{w}

\newcommand{\quext}{\quad\text}
\newcommand{\qquext}{\qquad\text}
\newcommand{\chimud}{\chi_{\delta,M,\app}}
\newcommand{\wmud}{w_{\delta,M,\app}}
\newcommand{\foraa}{\forae}

\newenvironment{rcomm}{\color{dred}}{\color{black}}

\newenvironment{comm}{\color{ddgreen} \textsf{R:}\,}{\color{black}}
\newenvironment{BOH}{\color{red} \textsf{ATT!!!}\,}{\color{black}}
\newenvironment{todo}{\color{ddcyan}}{\color{black}}

\newenvironment{ric}{\color{ddcyan}}{\color{black}}
\newenvironment{prob}{\color{ddmagenta}}{\color{black}}

\title{\Large Analysis of the Cahn-Hilliard equation \\
with  a chemical potential dependent mobility\thanks{All authors
have been supported by the {project  \emph{Programma
Galileo, Universit\`a Italo-Francese/Projet Galil\'ee ``Modelli matematici in scienza
dei materiali/Mod\`eles math\'ematiques en science des
mat\'eriaux''.}} This paper was initiated during a stay of M.G.,
R.R., and G.S. in the \emph{Laboratoire
 de Math\'ematiques et Applications} (Universit\'e de Poitiers), whose
hospitality is gratefully acknowledged.}}
                           %
\author{Maurizio Grasselli\footnote{ \emph{Dipartimento di Matematica
``F.\ Brioschi'', Politecnico di Milano. Via Bonardi, 9. I--20133
Milano, Italy. Email: {\tt maurizio.grasselli\,@\,polimi.it}}}\,, \, Alain
Miranville\footnote{ \emph{Laboratoire de Math\'ematiques et
Applications--SP2MI, Universit\'e de Poitiers. Boulevard Marie et
Pierre Curie--T\'el\'eport 2. F--86962 Chasseneuil Futuroscope
Cedex, France.  Email: {\tt miranv\,@\,math.univ-poitiers.fr}}}\,,
\, Riccarda Rossi\footnote{ \emph{Dipartimento di Matematica,
Universit\`a di
 Brescia. Via Valotti 9. I--25133 Brescia, Italy.}
 E-mail: {\tt riccarda.rossi\,@\,ing.unibs.it}}\,, \,
 Giulio Schimperna\footnote{\emph{Dipartimento di Matematica ``F.\
Casorati'', Universit\`a di Pavia. Via Ferrata, 1. I--27100 Pavia,
Italy. Email: {\tt giusch04\,@\,unipv.it}}}
  }

\date{March 11th, 2010}
 \maketitle

 \numberwithin{equation}{section}

\begin{abstract}

The aim of  this paper is to study the well-posedness and the
existence of global attractors for  a family of Cahn-Hilliard
equations with  a  mobility  depending on the chemical potential.
 Such models arise from ge\-ne\-ra\-li\-za\-tions of the
(classical) Cahn-Hilliard equation due to \textsc{M.~E.~Gurtin}.

\end{abstract}

\section{Introduction}
\label{s:1} In this paper, we address the initial and boundary value
problem for the following {\em generalized Cahn-Hil\-liard equation}:
\begin{equation}
\label{e:1} \chi_t -\Delta \alpha\left(\delta \chi_t -\Delta \chi
+\phi(\chi) \right)=0 \qquad \text{in }\Omega \times (0,T),
\end{equation}
where $\delta \geq 0$,  $\Omega \subset \R^3$ is a bounded domain,
$T>0$ a finite time horizon, and $\alpha:\R\to \R $ a strictly
increasing function.

The classical Cahn-Hilliard equation reads
\[
\chi _t-\Delta w=0,\ \ w=-\Delta \chi +\phi (\chi ) \qquad \text{in
}\Omega \times (0,T),
\]
\noindent where $\chi $ is the order parameter (corresponding to a
density of atoms), $w$ is the chemical potential (defined as a variational
derivative of the free energy with respect to the order parameter), and
$\phi $ is the derivative of a double-well potential. This equation plays
an essential role in materials science and describes phase separation
processes in binary alloys (see, e.g., \cite{Cah, CahH, NC}).

 By considering a
mechanical version of the second law of thermodynamics and
introducing a new balance law for interactions at a microscopic
level, \textsc{M.~E.~Gurtin} proposed in \cite{Gu} the following
equations:
\[
\begin{cases}
\displaystyle{
 \chi_t-{\rm  div}(A(\chi ,\nabla \chi ,\chi _t,w)\nabla w)=0,}
 \\
 \displaystyle{
w=\delta (\chi ,\nabla \chi ,\chi _t,w)\chi _t-\Delta \chi +\phi
(\chi )}
\end{cases} \qquad \text{in } \Omega \times (0,T),
\]
\noindent Taking $\delta $ constant and $A=a(w)I$,  with $a: \R \to
\R$ a positive function,
 we then obtain an equation of the form \eqref{e:1}, in which $\alpha$ is some primitive of  the function
 $a$.

In the viscous case $\delta >0$,  such equations  have been studied
in \cite{rossi05, rossi06}. Therein, results on the well-posedness
and the existence of global attractors have been obtained.

Our main aim in this paper is to treat the case $\delta =0$. We also
consider the viscous case $\delta >0$ under different (and more
general) assumptions on $\alpha $ and $\phi $ from those in
\cite{rossi05, rossi06}.  In particular, we prove the existence of
solutions both in the non-viscous case $\delta =0 $ (cf.
Theorem~\ref{th:1}) and in the
 viscous case $\delta >0$ (see Theorem~\ref{th:2}). In the latter setting, under
more restrictive assumptions on the nonlinearities $\alpha$ and
$\phi$, we also obtain (cf. Theorem 3.1) well-posedness and
continuous dependence results for (the Cauchy problem for)
\eqref{e:1}. For $\delta>0$ we are also able
  to study
the asymptotic behavior of the system and establish the existence of
the global attractor (see Theorem~\ref{th:4}) in a quite general
frame of assumptions on $\alpha$ and $\phi$, which may allow for
non-uniqueness of solutions. That is why, for this long-time
analysis we rely on the   notion of generalized semiflows proposed
by \textsc{J.M.~Ball} in \cite{Ball97}, and on the extension given
in \cite{rossi-segatti-stefanelli08}. Finally, relying on the
short-trajectory approach developed in~\cite{malek-prazak}, we also
conclude  the existence of exponential attractors and, thus, of
finite-dimensional global attractors.  We recall that
 an exponential attractor is a compact and semi-invariant
set which has finite fractal dimension and attracts the trajectories
exponentially fast; note that the global attractor may attract the
trajectories at a slow (polynomial) rate (see, e.g.,
\cite{BabinVishik92, EFNT, MZH}).

This paper is organized as follows. In Section 2, we define our
notation and give some preliminary results. Then, in Section 3, we
state our main results, whose proofs are carried out in the
remaining sections. Finally, in Appendix, we introduce the
approximation scheme for our problem and justify the a priori
estimates (formally) developed throughout the paper.

\section{Preliminaries} \label{ss:2.1}
\paragraph{Notation and  functional setup.}
Throughout the paper, we consider a bounded domain $\Omega
\subset \R^3$, with sufficiently smooth boundary $\partial \Omega$,
and write $|\mathcal{O}|$ for the  Lebesgue measure of any
(measurable) subset $ \mathcal{O} \subset \Omega$.
 Furthermore,
   given a Banach space $B$, we denote by
$\Vert\cdot\Vert_B$   the norm in  $B$  and by
 $_{B'}\langle\cdot,\cdot\rangle_B$ the duality pairing
between  $B'$ and $B$. We use the notation
 $$
 H:=L^2 (\Omega),\quad
 V:=H^1(\Omega), \quad
 Z:=\left\{ v \in H^{2}(\Omega) \: : \: \partial_{n} v=0 \right\},
$$
and identify $ H $ with its dual space $ H' $, so that
 $
  Z \subset V \subset H \subset V'\subset Z'  $, with
  dense and compact  embeddings.
We denote by   $ \mathcal{H} $, $ \mathcal{V} $, $ \mathcal{Z}
$,    $ \mathcal{V'} $, and $ \mathcal{Z'} $ the subspaces of the
elements $v$ of $H$, $ V $, $Z$,    $ V' $, and $Z'$,
respectively, with zero mean value $ \mo(v) =\frac{1}{|\Omega|}
\pairing{Z'}{Z}{v}{1} $. We    consider
  the operator
  \begin{equation}
  \label{def:opA}
 A: V \rightarrow V',  \qquad   _{V'}\langle Au, v \rangle _V:=\int_{\Omega} \nabla u \cdot\nabla v  \quad \forall u, v \in V,
\end{equation}
    and note that  $ Au \in \mvi $ for every $ u \in V $.
     Indeed,  the restriction of $ A $ to $ \mv $ is an
  isomorphism,  so that we can introduce  its  inverse
   operator $
    \mn: \mvi \rightarrow \mv $.
 We recall the   relations
  \begin{eqnarray}
    &&    \label{Aa}
    \pairing{V'}{V}{Au}{\mn (v)} = \pairing{V'}{V}{v}{u}\quad \forall u \in V, \; \forall v \in \mvi,
    \\    &&    \label{Bb}    \pairing{V'}{V}{u}{\mn (v)}  =\int_{\Omega} \nabla(\mn (u)) \cdot\nabla(\mn (v)) \,  \dd x =
\pairing{V'}{V}{v}{\mn (u) } \quad \forall u, v \in \mvi,
\end{eqnarray} and that, on
    account of Poincar\'{e}'s inequality for  zero mean value
    functions, the following norms on $V$ and~$V'$:
    \begin{eqnarray}
    &&
  \| u \va^{2}:= \pairing{V'}{V}{Au}{u}  + \,\mo(u)^2 \quad \forall u \in V,
  \nonumber  \\
 &&
    \| v \vi^{2}:= \pairing{V'}{V}{v}{\mn(v -\mo(v))}  + \,\mo(v)^2\quad \forall v \in V',
\nonumber
    \end{eqnarray}
    are equivalent to the standard ones.
   It follows
from the above  formulae that
 \[ \| v \vi^2= \pairing{V'}{V}{v}
{\mn(v)}=  \|\mn(v) \va^2
 \quad \forall v \in \mvi.
\]

  It is well known that the operator $A$~\eqref{def:opA}
   extends to an operator (which will be
  denoted by the same symbol) $A: H \to \mathcal{Z'} $.
The inverse of the restriction of $A$ to
   $ \mathcal{H}$
  is the extension of
   $\mn$  to an operator $\mn: \mathcal{Z'} \to \mathcal{H}
  $. By means of the latter, we define the space
\begin{equation}
\label{e:sob-neg}
\begin{gathered}
 \sobneg{-2}{q}:= \left\{v \in \mathcal{Z'}\, : \
\mn (v) \in L^q (\Omega) \right\} \qquad \text{for a given $q>1$,}
\\
\text{with the norm $\| v \|_{\sobneg{-2}{q}}:=\| \mn (v)\|_{L^q
(\Omega)}$.}
\end{gathered}
\end{equation}
The following result shows that, for $q\in (2,6)$ (which is the
index range  relevant to the analysis to be developed in what
follows, cf.~\eqref{e:2.17}), the space $\sobneg{-2}{q}$ can be
identified with the dual of the space
\[
 \sobneg{2}{q'}= \left\{ z \in \mathcal{V}\, : \, Az \in
 L^{q'}(\Omega)\right\}\,,
\]
  $q'$ being the conjugate exponent of $q$. We
endow the latter space with the norm $ \|z \|_{\sobneg{2}{q'}}:=
\|Az\|_{L^{q'}(\Omega)}$, which is equivalent to the standard
$W^{2,q'}$-norm by the (generalized) Poincar\'e
inequality. 
\begin{lemma}
\label{le:sob-dual} For $q\in (2,6)$,
 the operator  $\mathrm{J}: \sobneg{-2}{q} \to
(\sobneg{2}{q'})' $ defined by
\begin{equation}
\label{def:operator}
\pairing{(\sobneg{2}{q'})'}{\sobneg{2}{q'}}{\mathrm{J}(v)}{z}:=
\pairing{V'}{V}{Az}{\mn(v)} \qquad \text{for all $z \in
\sobneg{2}{q'}$ and $v \in \sobneg{-2}{q}$}
\end{equation}
is an isomorphism.
\end{lemma}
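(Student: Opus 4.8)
The plan is to prove that $\mathrm{J}$ is a bounded linear bijection with bounded inverse by exhibiting an explicit inverse (via the Riesz-type correspondence through the operator $\mn$) and invoking duality of $L^q$-spaces. First I would check that $\mathrm{J}$ is well-defined and bounded: for $v\in\sobneg{-2}{q}$ and $z\in\sobneg{2}{q'}$, by the very definition of the $A$--$\mn$ duality \eqref{Aa}--\eqref{Bb} one has $\pairing{V'}{V}{Az}{\mn(v)}=\int_\Omega Az\,\mn(v)\,\dd x$ (both factors lie in dual Lebesgue spaces since $Az\in L^{q'}(\Omega)$ and $\mn(v)\in L^q(\Omega)$), hence by Hölder
\[
\left|\pairing{(\sobneg{2}{q'})'}{\sobneg{2}{q'}}{\mathrm{J}(v)}{z}\right|\le \|Az\|_{L^{q'}(\Omega)}\,\|\mn(v)\|_{L^q(\Omega)}=\|z\|_{\sobneg{2}{q'}}\,\|v\|_{\sobneg{-2}{q}},
\]
so $\mathrm{J}(v)\in(\sobneg{2}{q'})'$ with $\|\mathrm{J}(v)\|\le\|v\|_{\sobneg{-2}{q}}$. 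Linearity of $\mathrm{J}$ is immediate from linearity of $A$ and $\mn$.

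Next I would prove injectivity: if $\mathrm{J}(v)=0$ then $\int_\Omega Az\,\mn(v)\,\dd x=0$ for all $z\in\sobneg{2}{q'}$. Since $A$ maps $\sobneg{2}{q'}$ onto $L^{q'}(\Omega)\cap\mathcal{V}'$-type elements — more precisely, as $z$ ranges over $\sobneg{2}{q'}$, $Az$ ranges over all of $L^{q'}(\Omega)$ with zero mean (using that $A\colon\mathcal{V}\to\mathcal{V}'$ is an isomorphism and elliptic regularity gives $W^{2,q'}$ solvability with Neumann conditions) — and since $\mn(v)\in L^q(\Omega)$, the vanishing of the pairing against all mean-zero $L^{q'}$ functions forces $\mn(v)$ to be constant a.e.; but $\mn(v)\in\mathcal{H}$ has zero mean, so $\mn(v)=0$, whence $v=A(\mn(v))=0$. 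For surjectivity, given $\Lambda\in(\sobneg{2}{q'})'$, I would transport it through the isomorphism $A\colon\sobneg{2}{q'}\to L^{q'}(\Omega)$ (mean-zero part) to obtain a functional on a closed subspace of $L^{q'}(\Omega)$; by Hahn–Banach and the identification $(L^{q'}(\Omega))'\cong L^q(\Omega)$, there is $g\in L^q(\Omega)$, which may be taken mean-zero, representing it, i.e. $\Lambda(z)=\int_\Omega Az\,g\,\dd x$ for all $z\in\sobneg{2}{q'}$. Setting $v:=Ag\in\mathcal{Z}'$, one has $\mn(v)=g\in L^q(\Omega)$, so $v\in\sobneg{-2}{q}$ and $\mathrm{J}(v)=\Lambda$; moreover $\|v\|_{\sobneg{-2}{q}}=\|g\|_{L^q(\Omega)}=\|\Lambda\|$, which together with the bound above shows $\mathrm{J}$ is an isometric isomorphism and in particular has bounded inverse.

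The main obstacle I anticipate is the precise justification that $A$ maps $\sobneg{2}{q'}$ \emph{onto} the mean-zero subspace of $L^{q'}(\Omega)$ and that, conversely, $\mn$ maps this subspace back into $\sobneg{2}{q'}$ — this is exactly the statement (invoked in the excerpt) that $\|z\|_{\sobneg{2}{q'}}=\|Az\|_{L^{q'}(\Omega)}$ is equivalent to the full $W^{2,q'}$-norm, which rests on elliptic $L^p$-regularity for the Neumann Laplacian on the smooth bounded domain $\Omega$. Given that regularity fact (standard, e.g. Agmon–Douglis–Nirenberg theory), everything else is the routine duality argument sketched above; the restriction $q\in(2,6)$ plays no essential role here beyond ensuring $q,q'\in(1,\infty)$ so that the Lebesgue spaces are reflexive and mutually dual, and it is only recorded because that is the range needed later in \eqref{e:2.17}.
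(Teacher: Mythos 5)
Your argument is correct in substance, but it follows a genuinely different route from the paper's. The paper never identifies the two spaces with Lebesgue spaces explicitly: it proves that $\mathrm{J}$ is an \emph{isometry} by testing $\mathrm{J}(v)$ against the specific element $z_v=\mn\bigl(|\mn(v)|^{q-2}\mn(v)\bigr)$ (which is admissible precisely because $|\mn(v)|^{q-2}\mn(v)\in L^{q'}(\Omega)\subset\mathcal{V}'$), thereby obtaining injectivity and closedness of the range, and then shows the range is dense by an annihilator argument: any $\bar z\in\sobneg{2}{q'}$ killed by all $\mathrm{J}(v)$ is tested with $v\in\mathcal{H}$, which via \eqref{Aa} gives $\int_\Omega\bar z v=0$ for all such $v$, hence $\bar z=0$. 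You instead observe that $A$ is an isometric bijection from $\sobneg{2}{q'}$ onto the mean-zero part of $L^{q'}(\Omega)$ and that $\mn$ identifies $\sobneg{-2}{q}$ with the mean-zero part of $L^q(\Omega)$, and then you build the inverse of $\mathrm{J}$ directly from Hahn--Banach and the duality $(L^{q'})'\cong L^q$. Your approach makes the structure of the two spaces more transparent; the paper's avoids any explicit use of Hahn--Banach at the price of the (clever but ad hoc) test element $z_v$. Two remarks: you do not need Agmon--Douglis--Nirenberg regularity for the surjectivity of $A$ onto mean-zero $L^{q'}$, since $\sobneg{2}{q'}$ is \emph{defined} as $\{z\in\mathcal{V}:\,Az\in L^{q'}(\Omega)\}$, so for mean-zero $f\in L^{q'}(\Omega)\subset\mathcal{V}'$ the element $z=\mn(f)$ already lies in $\sobneg{2}{q'}$; elliptic regularity only enters if one wants to identify that space with a genuine $W^{2,q'}$ space, which the lemma does not require.

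Two genuine (if small) inaccuracies should be fixed. First, your closing claim that the restriction $q\in(2,6)$ ``plays no essential role'' is not right, and your own construction uses it: $q<6$ (i.e.\ $q'>6/5$) is what guarantees $L^{q'}(\Omega)\subset\mathcal{V}'$, so that $\mn$ can be applied to mean-zero $L^{q'}$ data (both in your inversion of $A$ and in the paper's choice of $z_v$), while $q>2$ lets you regard the representative $g\in L^q(\Omega)$ as an element of $\mathcal{H}$ when you set $v:=Ag$ with the paper's extension $A:H\to\mathcal{Z}'$. Second, the chain $\|v\|_{\sobneg{-2}{q}}=\|g\|_{L^q(\Omega)}=\|\Lambda\|$ is not justified: the dual of the mean-zero subspace of $L^{q'}(\Omega)$ is the quotient of $L^q(\Omega)$ by the constants, and after you normalize the Hahn--Banach representative to have zero mean you only get $\|g\|_{L^q(\Omega)}\geq\|\Lambda\|$, since the quotient norm need not be attained at the mean-zero representative. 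Hence your argument, as written, does not yield that $\mathrm{J}$ is isometric. This is harmless for the lemma itself: bijectivity plus the bound $\|\mathrm{J}(v)\|\leq\|v\|_{\sobneg{-2}{q}}$ and the completeness of both spaces give a bounded inverse by the open mapping theorem; alternatively, the paper's test element $z_v$ supplies the missing lower bound and shows that $\mathrm{J}$ is in fact an isometry.
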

\begin{proof}
We preliminarily note that, since $q \in (2,6)$, the conjugate
exponent $q'$ belongs to $(6/5,2)$ and, consequently, one has the following
embeddings:
\begin{equation}
\label{e:embeddings} \mathcal{H} \subset \mathcal{V'} \subset
\sobneg{-2}{q}, \qquad L^{q'}(\Omega) \subset \mathcal{V'}, \qquad
\mathcal{Z} \subset \sobneg{2}{q'}\,.
\end{equation}
Clearly, the operator $\mathrm{J}$ is well defined, linear, and
continuous, since, for all $z \in \sobneg{2}{q'}$ and $v \in
\sobneg{-2}{q}$,
\begin{equation}
\label{ineq-cont}
\left|\pairing{(\sobneg{2}{q'})'}{\sobneg{2}{q'}}{\mathrm{J}(v)}{z}
\right| \leq  \| Az\|_{L^{q'}(\Omega)} \|\mn(v) \|_{L^q(\Omega)}
\leq \|z \|_{\sobneg{2}{q'}} \| v\|_{\sobneg{-2}{q}}\,.
\end{equation}
Furthermore, for every $v \in \sobneg{-2}{q}$, one can  choose $z_v =
\mn (|\mn(v)|^{q-2}\mn(v) )$ (note that $z_v$ is well defined and
belongs to $\mathcal{V}$, since $|\mn(v)|^{q-2}\mn(v) \in
L^{q'}(\Omega) \subset \mathcal{V'}$ by the second
of~\eqref{e:embeddings}). Then,
\[
\begin{aligned}
& \pairing{(\sobneg{2}{q'})'}{\sobneg{2}{q'}}{\mathrm{J}(v)}{z_v}=
 \|\mn(v)\|_{L^q
(\Omega)}^{q},
\\ &
 \|Az_v\|_{L^{q'}(\Omega)}=
\||\mn(v)|^{q-2}\mn(v)\|_{L^{q'}(\Omega)}= \|\mn(v)\|_{L^q
(\Omega)}^{q-1},
\end{aligned}
\]
so that
\[
\|\mathrm{J}(v)\|_{(\sobneg{2}{q'})'} \geq
\frac{\left|\pairing{(\sobneg{2}{q'})'}{\sobneg{2}{q'}}{\mathrm{J}(v)}{z_v}
\right|}{\|z_v \|_{\sobneg{2}{q'}}} = \frac{\|\mn(v)\|_{L^q
(\Omega)}^{q}}{\|\mn(v)\|_{L^q (\Omega)}^{q-1}}=\|\mn(v)\|_{L^q
(\Omega)}= \|v\|_{\sobneg{-2}{q}}\,.
\]
In view of~\eqref{ineq-cont}, we conclude that $\mathrm{J}$ is an
isometry. In particular, it is injective and  the image
$\mathrm{J}(\sobneg{-2}{q} )$ is closed in $(\sobneg{2}{q'})'$. To
conclude that $\mathrm{J}$ is surjective, we will prove that
\begin{equation}
\label{image:dense}
 \text{$\mathrm{J}(\sobneg{-2}{q} )$  is dense  in
$(\sobneg{2}{q'})'$.}
\end{equation}
Indeed, let  $\bar{z}  \in \sobneg{2}{q'} $ be such that
\begin{equation}
\label{e:test-densita}
\pairing{(\sobneg{2}{q'})'}{\sobneg{2}{q'}}{\mathrm{J}(v)}{\bar{z}}=0
\ \ \text{for all $v \in \sobneg{-2}{q}$.}
\end{equation}
  In particular, \eqref{e:test-densita} holds for all $v \in
  \mathcal{H}$, so that, also in view of~\eqref{Aa},
  \[
0 = \pairing{V'}{V}{A\bar{z}}{\mn(v)} =\pairing{V'}{V}{v}{\bar{z}}
= \int_{\Omega} \bar{z} v \quad \text{for all  $v \in
\mathcal{H}$\,.}
  \]
From the above relation, we easily conclude that $\bar{z}=0$,
whence~\eqref{image:dense}.
\end{proof}
\paragraph{A generalization of Poincar\'e's inequality.}
The following result will play an important role in the derivation
of the \emph{a priori estimates} of  Section~\ref{s:3.1}.
\begin{lemma}
\label{le-poinca} Let $X$ and $Y$ be Banach spaces, with $X$
reflexive, and assume that
\begin{equation}
\label{e:compact-embed} \text{$X \Subset Y$ with compact
embedding.}
\end{equation}
 Consider
\begin{align}
& \label{funz-g} G: X \to Y \quad \text{a linear,
weakly-weakly continuous functional,}
\\
& \label{funz-psi}
\begin{aligned}
\Psi: X \to [0,+\infty)  \quad & \text{a $1$-positively
homogeneous,} \\
 & \text{sequentially weakly lower-semicontinuous functional.}
 \end{aligned}
\end{align}
Assume that $G$ and $\Psi$ comply with the following {\em
compatibility condition}: for all $v \in X$,
\begin{equation}
\label{e:comp} Gv=0 \quad \text{and} \quad \Psi(v) =0 \ \Rightarrow
\ v=0\,,
\end{equation}
and that
\begin{equation}
\label{e:norm-equivalent} \exists\, C\geq 1 : \ \ \forall\, v \in X
\quad \frac{1}C \left( \| v\|_{Y} + \| Gv\|_{Y} \right) \leq
\|v\|_{X} \leq C\left( \| v\|_{Y} + \| Gv\|_{Y} \right)\,.
\end{equation}
Then,
\begin{equation}
\label{e:poinc-gen} \exists \, K>0: \ \ \forall\, v\in X  \quad
\|v\|_{X} \leq K\left( \| Gv\|_{Y} + \Psi(v) \right)\,.
\end{equation}
\end{lemma}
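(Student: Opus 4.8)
The plan is to argue by contradiction, via a standard normalization-and-compactness scheme (a variant of the Peetre--Tartar lemma). I would suppose that \eqref{e:poinc-gen} fails for every $K>0$, so that for each $n\in\N$ there is $w_n\in X$ with $\|w_n\|_X > n\bigl(\|Gw_n\|_Y+\Psi(w_n)\bigr)$; in particular $w_n\neq 0$, and I would set $v_n:=w_n/\|w_n\|_X$. Using the linearity of $G$ and the $1$-positive homogeneity of $\Psi$, this produces a sequence $(v_n)\subset X$ with
\[
\|v_n\|_X=1, \qquad \|Gv_n\|_Y+\Psi(v_n)<\tfrac1n \quad \text{for all } n,
\]
so that in particular $\|Gv_n\|_Y\to 0$ and $\Psi(v_n)\to 0$.

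Next, since $X$ is reflexive and $(v_n)$ is bounded in $X$, I would pass to a (not relabeled) subsequence along which $v_n\weakto v$ in $X$ for some $v\in X$; by the compact embedding \eqref{e:compact-embed}, $v_n\to v$ strongly in $Y$. The weak--weak continuity \eqref{funz-g} of $G$ yields $Gv_n\weakto Gv$ in $Y$, so lower semicontinuity of the norm with respect to weak convergence gives $\|Gv\|_Y\le\liminf_n\|Gv_n\|_Y=0$, whence $Gv=0$; similarly the sequential weak lower semicontinuity \eqref{funz-psi} of $\Psi$, together with $\Psi\ge 0$, gives $0\le\Psi(v)\le\liminf_n\Psi(v_n)=0$, whence $\Psi(v)=0$. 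The compatibility condition \eqref{e:comp} then forces $v=0$. Finally, invoking the right-hand inequality in \eqref{e:norm-equivalent},
\[
1=\|v_n\|_X\le C\bigl(\|v_n\|_Y+\|Gv_n\|_Y\bigr)\longrightarrow C\bigl(\|v\|_Y+\|Gv\|_Y\bigr)=0,
\]
a contradiction, which would conclude the proof.

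I do not expect a genuine obstacle here; the argument is essentially routine. The points deserving care are: (i) that dividing by $\|w_n\|_X$ preserves the defect inequality, which is precisely where the linearity of $G$ and the positive homogeneity of $\Psi$ enter; and (ii) that the limit passage is performed in the correct topologies, namely weak convergence of $(Gv_n)$ in $Y$ and of $(v_n)$ in $X$, upgraded to strong convergence of $(v_n)$ in $Y$ through the compact embedding \eqref{e:compact-embed}. One should also observe that only the upper bound in \eqref{e:norm-equivalent} is actually used.
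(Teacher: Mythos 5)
Your argument is correct and is essentially the paper's own proof: the same contradiction-plus-normalization scheme, extraction of a weakly convergent subsequence via reflexivity, passage to the limit using the compact embedding, weak--weak continuity of $G$ and weak lower semicontinuity of $\Psi$, the compatibility condition to identify the limit as $0$, and the upper bound in \eqref{e:norm-equivalent} to contradict the normalization $\|v_n\|_X=1$. Your closing observations (homogeneity justifying the normalization, and the fact that only the right-hand inequality of \eqref{e:norm-equivalent} is used) are accurate and match the paper's use of the hypotheses.
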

\noindent {\em Proof.} \, Assume, by contradiction,
that~\eqref{e:poinc-gen} does not hold: then, there exists a
sequence $\{ v_n \} \subset X$ such that, for every $n \in \N$,
\begin{equation}
\label{e:contradd} \|v_n\|_{X} > n \left( \| Gv_n\|_{Y} + \Psi(v_n)
\right)\,.
\end{equation}
In particular, this yields that $\|v_n\|_{X} \neq 0$ for all $n$.
Letting $w_n:= v_n / \|v_n\|_{X}$ and using the $1$-homogeneity of
$\Psi$, we deduce from~\eqref{e:contradd} that
$$  \| Gw_n\|_{Y} +
\Psi(w_n) <\frac1n \quad \text{for every $n \in \N$}\,,
$$
giving
\begin{equation}
\label{e:2.1.12}\text{ $Gw_n \to 0$ in $Y$ \ and  \ $\Psi(w_n) \to
0$ as $n \to +\infty$.}
\end{equation}
 On the other hand, by the reflexivity of $X$, there exists a
subsequence $\{ w_{n_k}\}$ weakly converging in $X$ to some
$\bar{w}$. In view of~\eqref{e:compact-embed}--\eqref{funz-psi}, we
find
$$
w_{n_k} \to w \ \ \text{in $Y$}, \qquad Gw_{n_k} \weakto Gw \ \
\text{in $Y$}, \qquad \Psi(w) \leq \lim_{k \to +\infty}
\Psi(w_{n_k})\,.
$$
Hence, \eqref{e:2.1.12}  yields that $Gw=0$ and $\Psi(w)=0$, so
that, by \eqref{e:comp}, $w=0$. Thus, by \eqref{e:norm-equivalent}
and \eqref{e:2.1.12},
$$
\lim_{k \to +\infty}\|w_{n_k}\|_{X}\leq  C\lim_{k \to +\infty} \left(
\|w_{n_k} \|_{Y} + \| Gw_{n_k}\|_{Y} \right) =0,
$$
 in contrast
with the fact that $\| w_n\|_{X}=1$ for all $n \in \N$. \fin
\paragraph{A compactness criterion.}
 Let
\begin{equation}
\label{e:setting}
\begin{gathered}
\text{$\mathcal{O} \subset \R^d$, $d\geq 1$, be an open set with
$|\mathcal{O}|<+\infty$,}
\\
\text{ $B$ be a separable Banach space, and $ 1 \leq p <+\infty$.}
\end{gathered}
\end{equation}
We recall that a sequence  $ \{ u_n \} \subset L^{p}(\mathcal{O};B)
$ is \emph{$p$-uniformly integrable} (or simply \emph{uniformly
integrable} if $p=1$) if
\begin{equation}
  \label{lour}
  \forall\, \varepsilon > 0 \quad \exists\, \delta > 0:\quad
  \forall\, J
  \subset \mathcal{O} \quad |J| <
  \delta\ \Rightarrow\ \sup_{n \in \N} \int_{J}
  \|u_n(y) \|_{B}^{p} \ddd y  \leq \varepsilon.
\end{equation}
We quote the following result (cf.~\cite[Thm.~III.6]{Dunford-Schwartz58}) which will be
extensively used in what follows.
\begin{theorem}
\label{t:ds} In the setting of~\eqref{e:setting}, given a sequence $
\{ u_n \}
 \subset L^{p}(\mathcal{O};B) $, assume that there exist a  subsequence $\{u_{n_k}\}$
and a measurable function $u :\mathcal{O} \to B$
  such that
 \[
u_{n_k} (y) \to u(y) \ \  \text{in $B$} \ \ \text{for almost all}\
y \in \mathcal{O}\,.
 \]
Then, $u_{n_k} \to u$ in $L^{p}(\mathcal{O};B) $ if and only if it
is $ p $-uniformly integrable.
\end{theorem}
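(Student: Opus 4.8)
The plan is to prove the two implications separately; the substantive one is that $p$-uniform integrability is \emph{sufficient} for convergence in $L^p(\mathcal{O};B)$, and the engine throughout is Egorov's theorem, available precisely because $|\mathcal{O}|<+\infty$. Before either direction, I would record that $u\in L^p(\mathcal{O};B)$: applying \eqref{lour} with $\varepsilon=1$ produces some $\delta>0$, and since $|\mathcal{O}|<+\infty$ we may split $\mathcal{O}$ into finitely many measurable pieces each of measure $<\delta$; summing the corresponding bounds yields $\sup_k\|u_{n_k}\|_{L^p(\mathcal{O};B)}^p<+\infty$. Then Fatou's lemma, applied to the scalar functions $y\mapsto\|u_{n_k}(y)\|_B^p$ and using $\|u_{n_k}(y)\|_B\to\|u(y)\|_B$ for a.a.\ $y$, gives $u\in L^p(\mathcal{O};B)$ (strong measurability of $u$ being automatic, as $B$ is separable).

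\emph{Sufficiency.} Fix $\varepsilon>0$. Using the $p$-uniform integrability of $\{u_{n_k}\}$ together with the absolute continuity of the Lebesgue integral for the single function $\|u(\cdot)\|_B^p\in L^1(\mathcal{O})$, I would pick $\delta>0$ such that $|J|<\delta$ forces both $\sup_k\int_J\|u_{n_k}\|_B^p\le\varepsilon$ and $\int_J\|u\|_B^p\le\varepsilon$. Egorov's theorem, applied to the scalar sequence $y\mapsto\|u_{n_k}(y)-u(y)\|_B$, which tends to $0$ for a.a.\ $y$, furnishes a measurable set $E\subset\mathcal{O}$ with $|\mathcal{O}\setminus E|<\delta$ on which the convergence is uniform. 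Splitting $\int_{\mathcal{O}}\|u_{n_k}-u\|_B^p=\int_E+\int_{\mathcal{O}\setminus E}$, the first term is $\le|E|\,\big(\sup_{y\in E}\|u_{n_k}(y)-u(y)\|_B\big)^p$, hence $<\varepsilon$ for $k$ large, while the second is controlled, via the elementary inequality $\|a+b\|_B^p\le2^{p-1}(\|a\|_B^p+\|b\|_B^p)$, by $2^{p-1}\big(\int_{\mathcal{O}\setminus E}\|u_{n_k}\|_B^p+\int_{\mathcal{O}\setminus E}\|u\|_B^p\big)\le2^p\varepsilon$. Letting $k\to\infty$ and then $\varepsilon\to0$ gives $u_{n_k}\to u$ in $L^p(\mathcal{O};B)$.

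\emph{Necessity.} This is the classical fact that an $L^p$-convergent sequence satisfies \eqref{lour} on a finite measure space. Given $\varepsilon>0$, choose $k_0$ with $\|u_{n_k}-u\|_{L^p(\mathcal{O};B)}^p<\varepsilon/2^p$ for $k>k_0$; the finite collection $\{u_{n_1},\dots,u_{n_{k_0}},u\}\subset L^p(\mathcal{O};B)$ supplies, by absolute continuity of the integral, some $\delta>0$ with $|J|<\delta\Rightarrow\int_J\|u_{n_j}\|_B^p\le\varepsilon$ for $j\le k_0$ and $\int_J\|u\|_B^p\le\varepsilon/2^p$. For $k>k_0$ and $|J|<\delta$, the same convexity inequality yields $\int_J\|u_{n_k}\|_B^p\le2^{p-1}\big(\|u_{n_k}-u\|_{L^p(\mathcal{O};B)}^p+\int_J\|u\|_B^p\big)\le\varepsilon$, which is exactly \eqref{lour}.

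The main obstacle is the sufficiency direction, and its only genuinely non-routine ingredient is Egorov's theorem, which crucially exploits $|\mathcal{O}|<+\infty$: it converts a.e.\ convergence into uniform convergence off a set whose measure can be calibrated against the modulus $\delta$ coming from \eqref{lour}. The vector-valued aspects are harmless, since all estimates are carried out on the scalar functions $\|u_{n_k}(\cdot)-u(\cdot)\|_B$ and separability of $B$ guarantees that limits of strongly measurable functions are strongly measurable.
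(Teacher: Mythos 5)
Your argument is correct, but there is nothing in the paper to compare it with step by step: Theorem~\ref{t:ds} is not proved in the paper at all, it is simply quoted from Dunford--Schwartz \cite{Dunford-Schwartz58} as a known Vitali-type convergence criterion. What you have written is a complete, self-contained proof of that quoted result. The preliminary observation that $u\in L^p(\mathcal{O};B)$ (uniform bound from \eqref{lour} with $\varepsilon=1$ plus a finite partition of $\mathcal{O}$, then Fatou on the scalar functions $\|u_{n_k}(\cdot)\|_B^p$, with strong measurability of $u$ inherited from a.e.\ convergence and separability of $B$) is needed and correctly handled. The sufficiency direction is the standard Egorov argument, and you rightly flag that $|\mathcal{O}|<+\infty$ from \eqref{e:setting} is what makes both Egorov and the calibration of $|\mathcal{O}\setminus E|$ against the $\delta$ of \eqref{lour} work; the elementary inequality $\|a+b\|_B^p\leq 2^{p-1}\bigl(\|a\|_B^p+\|b\|_B^p\bigr)$ is valid for all $1\leq p<+\infty$, so the case $p=1$ causes no trouble. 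The necessity direction is the usual absolute-continuity argument for the finite family $\{u_{n_1},\dots,u_{n_{k_0}},u\}$, and your constants do close ($2^{p-1}(\varepsilon/2^p+\varepsilon/2^p)=\varepsilon$); note also that reading ``it is $p$-uniformly integrable'' as referring to the subsequence $\{u_{n_k}\}$ is the intended interpretation, consistent with how the theorem is applied later in the paper. In short: the paper buys the statement by citation, while your route proves it from scratch with Egorov; an equally common alternative would be to deduce sufficiency from the generalized (dominated-by-equi-integrability) form of Vitali's theorem or from convergence in measure, but your version is as elementary as one can ask for.
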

\noindent
 Finally, for the reader's convenience, here below we report the celebrated
 lower semi\-continuity result due to~\textsc{A.D.~Ioffe}~\cite{ioffe77}.
 \begin{theorem}
 \label{th-ioffe}
Let $f: \mathcal{O} \times \R^n \times \R^m \to [0,+\infty]$,
$n,\,m\geq 1$, be a measurable non-negative function such that
\begin{align}
\label{hyp:ioffe1} &
 f(x,\cdot,\cdot) \ \ \ \text{is lower
semicontinuous on $\R^n \times \R^m$ for every $x \in \mathcal{O}$,}
\\
& f(x,u,\cdot) \ \ \ \text{is convex on  $ \R^m$ for every $(x,u)
\in \mathcal{O} \times \R^n$.}
\end{align}
Let $(u_k,v_k), \ (u,v): \mathcal{O} \to \R^n \times \R^m $ be
 measurable functions such that
\[
u_k(x) \to u(x)  \quad \text{in measure in $ \mathcal{O}$,} \qquad
v_k \weakto v \quad \text{weakly in $L^1 ( \mathcal{O}; \R^m)$}.
\]
Then,
\begin{equation}
\label{integral-lsc} \liminf_{k \to +\infty}  \int_{\mathcal{O}}
f(x,u_k(x),v_k(x)) \, \mathrm{d}x \geq \int_{\mathcal{O}}
f(x,u(x),v(x)) \, \mathrm{d}x\,.
\end{equation}
\end{theorem}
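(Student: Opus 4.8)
This is a classical result, and the plan is to follow the standard ``affine minorant'' route: if, for every $(x,u)$, the affine map $v\mapsto a(x,u)+b(x,u)\cdot v$ lies below $v\mapsto f(x,u,v)$, then $\int_{\mathcal O}f(x,u_k,v_k)\,\mathrm{d}x\ge\int_{\mathcal O}(a(x,u_k)+b(x,u_k)\cdot v_k)\,\mathrm{d}x$, and the right-hand side passes to the limit because $u_k\to u$ (a.e., after extracting from the convergence in measure a subsequence) and $v_k\weakto v$ in $L^1$; one then exhausts $f(x,u,\cdot)$ by such minorants. First I would perform the reductions. Extracting a subsequence, one may assume that $\int_{\mathcal O}f(x,u_k,v_k)\,\mathrm{d}x$ converges to the $\liminf$ in \eqref{integral-lsc}, which may be taken finite (otherwise there is nothing to prove), and that $u_k\to u$ a.e.\ in $\mathcal O$. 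On the set where $f(x,u(x),\cdot)\equiv+\infty$ both sides of \eqref{integral-lsc} equal $+\infty$: the left-hand one because the joint lower semicontinuity of $f$ together with the uniform $L^1$-bound on $\{v_k\}$ (via Chebyshev's inequality) forces $f(x,u_k(x),v_k(x))\to+\infty$ on all but a small portion of that set. Hence one may assume $f(x,u(x),\cdot)$ proper for a.a.\ $x$ and then, replacing $f$ by its Moreau--Yosida regularization in the last variable, $f^R(x,u,v):=\inf_{w\in\R^m}\{f(x,u,w)+R|v-w|\}$ --- which is nonnegative, $R$-Lipschitz and convex in $v$, again jointly measurable, dominated by $f$, and increasing to $f$ as $R\to+\infty$ --- it suffices, by monotone convergence, to prove \eqref{integral-lsc} for each $f^R$. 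Thus one may assume $f$ finite-valued.

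The structural fact I would use is that a nonnegative, finite-valued, jointly measurable function $f$ on $\mathcal O\times\R^n\times\R^m$ that is convex in $v$ admits a representation $f(x,u,v)=\sup_{i\in\N}(a_i(x,u)+b_i(x,u)\cdot v)$ with $a_i:\mathcal O\times\R^n\to\R$ and $b_i:\mathcal O\times\R^n\to\R^m$ jointly measurable. Granting this, I would invoke Egorov's and Lusin's theorems --- the latter applied to each of the countably many $a_i,b_i$, with exceptional sets of total measure $<\varepsilon$ --- together with a truncation making $u$ bounded, to produce, for every $\varepsilon>0$, a compact set $K=K_\varepsilon\subset\mathcal O$ with $|\mathcal O\setminus K|<\varepsilon$ such that $u_k\to u$ uniformly on $K$, $u$ is bounded on $K$, and all $a_i,b_i$ are jointly continuous on $K\times\overline{B}$ for a closed ball $\overline{B}\subset\R^n$ eventually containing the values of $u$ and of all the $u_k$ on $K$. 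Choosing the sets $K_\varepsilon$ increasing and using $f\ge0$, it then suffices to prove $\liminf_k\int_{\mathcal O}f(x,u_k,v_k)\,\mathrm{d}x\ge\int_K f(x,u,v)\,\mathrm{d}x$ for each such $K$ and to let $\varepsilon\to0$ by monotone convergence.

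On $K$ the limit passage is elementary. Fix $t<\int_K f(x,u,v)\,\mathrm{d}x$. Since $\max_{i\le N}(a_i(x,u)+b_i(x,u)\cdot v)$ increases pointwise to $f(x,u,v)$ as $N\to+\infty$ and lies between an $L^1(K)$-function and $f(x,u,v)$, the monotone convergence theorem provides $N$ with $\int_K\max_{i\le N}(a_i(x,u)+b_i(x,u)\cdot v)\,\mathrm{d}x>t$. Choosing a measurable partition $K=E_1\cup\dots\cup E_N$ on which the maximum is attained by the $j$-th index, one has, for every $k$, $\int_{\mathcal O}f(x,u_k,v_k)\,\mathrm{d}x\ge\sum_{j=1}^N\int_{E_j}(a_j(x,u_k)+b_j(x,u_k)\cdot v_k)\,\mathrm{d}x$. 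For each fixed $j$ the $a_j$-term converges by the uniform convergence and uniform boundedness of $u_k$ on $K$, while the $b_j$-term converges since $b_j(\cdot,u_k)\to b_j(\cdot,u)$ uniformly and boundedly on $K$ and $\{v_k\}$ is uniformly integrable with $v_k\weakto v$ in $L^1$. As the sum has finitely many terms, passing to the limit yields $\liminf_k\int_{\mathcal O}f(x,u_k,v_k)\,\mathrm{d}x\ge\int_K\max_{i\le N}(a_i(x,u)+b_i(x,u)\cdot v)\,\mathrm{d}x>t$; letting $t\uparrow\int_K f(x,u,v)\,\mathrm{d}x$, then $\varepsilon\to0$, then $R\to+\infty$, gives \eqref{integral-lsc}.

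The main obstacle is the representation of $f$ by measurable affine minorants. This is De Giorgi's construction carried out measurably: for each $q\in\mathbb{Q}^m$ one selects, jointly measurably in $(x,u)$, an element $\xi(x,u)$ of the subdifferential $\partial_v f(x,u,q)$ --- nonempty because $f(x,u,\cdot)$ is finite-valued and convex, hence continuous --- and the minorants $v\mapsto f(x,u,q)+\xi(x,u)\cdot(v-q)$, $q\in\mathbb{Q}^m$, recover $f$ pointwise by convexity and continuity. Ensuring that such a selection is measurable, and organizing the accompanying Lusin (or Scorza--Dragoni) reduction for the resulting countable family, is where the measurable-selection machinery --- essentially the full weight of Ioffe's argument --- enters; the remaining ingredients (Egorov, monotone convergence, the Dunford--Pettis characterization of uniform integrability) are routine.
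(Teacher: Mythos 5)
A preliminary remark: the paper does not prove this statement at all. Theorem~\ref{th-ioffe} is quoted as a known tool, with the proof delegated to the cited 1977 paper of Ioffe, so there is no proof of record to compare yours against; your proposal has to stand on its own. As a roadmap it follows the classical De Giorgi--Ioffe strategy (countable affine minorants, Egorov/Lusin localization, finite maxima plus monotone convergence), and the outer reductions are sensible.

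However, the central step has a genuine gap as written. Once you represent $f(x,u,v)=\sup_i\bigl(a_i(x,u)+b_i(x,u)\cdot v\bigr)$ with coefficients that are merely \emph{jointly measurable}, applying Lusin's theorem to each $a_i,b_i$ on $\mathcal O\times\overline B$ does not give you continuity on a product set $K\times\overline B$: Lusin only yields continuity off a set of small \emph{product} measure, and such an exceptional set is useless here, since the graphs $\{(x,u_k(x))\}$ and $\{(x,u(x))\}$ have product measure zero and may well sit inside it; more fundamentally, no removal of a small set of $x$'s can turn a function merely measurable in $u$ (think $a_i(x,u)=\mathbf 1_{\{u>0\}}$) into one continuous in $u$. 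Without continuity in $u$ of the coefficients, the key passage $a_i(x,u_k(x))\to a_i(x,u(x))$, $b_i(x,u_k(x))\to b_i(x,u(x))$ from convergence in measure of $u_k$ simply fails. What the argument needs is that the minorants be Carath\'eodory in $(x,u)$ --- this is exactly what De Giorgi's explicit construction (coefficients obtained by integrating $f$ against fixed smooth kernels in $v$) or a Scorza--Dragoni/normal-integrand argument supplies, and it is the heart of Ioffe's proof, which you acknowledge as ``the main obstacle'' but do not carry out. Two smaller points of the same nature: the joint measurability of the Moreau--Yosida regularization $f^R$ (an infimum over an uncountable family of a function only lower semicontinuous in $w$) itself requires measurable-projection or selection arguments, and the claim that the left-hand side is $+\infty$ on the set where $f(x,u(x),\cdot)\equiv+\infty$ needs a uniform, measurable version of the lower-semicontinuity blow-up (Egorov on $u_k$ combined with compactness in $v$ on $\{|v_k|\le R\}$), not just Chebyshev. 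Since the paper itself treats the theorem as a citation, deferring these points to the literature is defensible, but as a self-contained proof the proposal is incomplete precisely at its crux.
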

\subsection{Global attractors for generalized semiflows}
\label{ss:2.1.1}
\noindent
 As mentioned in the introduction, in order to study the
long-time behavior of solutions to the generalized Cahn-Hilliard
equation~\eqref{e:1} \emph{in the viscous case},  we rely on the
theory of \emph{generalized} semiflows introduced by
\textsc{J.M.~Ball} in~\cite{Ball97}. In order to make this paper as
self-contained as possible, in this section we recall the main
definitions and results of this theory,
 closely
following  \cite{Ball97}.
\begin{notation}
\label{not:phase-space}
 The phase space is  a (not necessarily complete) metric space  $(\cx,
\dcx)$, the distance $\dcx$ inducing
 the \emph{Hausdorff semidistance}
$\semidist$ of two non-empty subsets $A, \, B \subset \cx$
 by the formula $\semidist(A,B):= \sup_{a \in A} \inf_{b \in B} \dcx(a,b)$. 
\end{notation}
\begin{definition}[Generalized semiflow]
\label{def:generalized-semiflow} A \emph{generalized semiflow}
$\sfl$ on $\cx$ is a family of maps $g:[0,+\infty) \to \cx$
(referred to as ``solutions") satisfying the following properties:
\begin{description}
\item[(P1)] \emph{\bf (Existence)} for any $g_0 \in \cx$, there exists at
least one $g \in \sfl$ such that  $g(0)=g_0$;
 \item[(P2)]
\emph{\bf (Translates of solutions are solutions)} for any $g \in
\sfl$ and $\tau \geq 0$, the map $g^\tau (t):=g(t+\tau),$ $t \in
[0,+\infty),$ belongs to $\sfl$;
 \item[(P3)]
\emph{\bf (Concatenation)} for any $ g $, $ h \in \sfl$ and $\tau
\geq 0$  with $h(0)=g(\tau)$, then  $z \in \sfl$, $ z$ being the map
defined by
\begin{equation}
\label{def:concaten}
 z(t):= \begin{cases} g(t)  & \text{if $0 \leq t \leq \tau,$}
 \\
 h(t-\tau) & \text{if $ t >\tau$;}
 \end{cases}
 \end{equation}
\item[(P4)] \emph{\bf (Upper-semicontinuity w.r.t. the initial data)} if
$\{g_n \} \subset \sfl$ and $g_n (0) \to g_0, $ then there exist a
subsequence $\{g_{n_k}\}$ of $\{g_n \}$ and $g \in \sfl$ such that $
g(0)=g_0$ and  $g_{n_k}(t) \to g(t)$ for all $t \geq 0.$
\end{description}
\end{definition}
\paragraph{Orbits, $\omega$-limits and attractors.}
 Given  a solution $ g \in \sfl$, we recall that
  the \emph{$\omega$-limit} $\omega(g)$
of $g$
  is
defined by
$$
\omega(g):= \{ x \in \cx \ : \ \exists \{t_n\} \subset [0,+\infty),
\ t_n \to +\infty, \ \text{such that} \ \ g(t_n) \to x \}\,.
$$
Similarly,  the \emph{$\omega$-limit} of a set $E \subset \cx$ is
given by
\[
\begin{aligned}
\omega(E):=\big\{
 x \in \cx \ : \ &  \exists  \{g_n\} \subset \geneset \
 \text{such that $\{g_n (0)\} \subset E$, $\{g_n (0)\}$ is bounded,
and} \\
 & \exists \{t_n\} \subset [0,+\infty), \ t_n \to +\infty,\
\text{such that $ g_n (t_n) \to x$} \big\}.\nonumber
\end{aligned}
\]
Furthermore,
 we say that $w: \R \to \cx$ is  a \emph{complete orbit}
if, for any $s \in \R$, the translate
 map $w^s$, restricted to  the positive half-line $[0,+\infty),$ belongs to $ \sfl$.
For every $\,t \geq 0 $,
 we can
 introduce the
operator $\,{T}(t): 2^\cx \to 2^\cx\,$~by setting
 \begin{equation}
 \label{eq:operat-T}
  {T}(t)E:=\{ g(t) \ : \
g \in \geneset \ \  \text{with} \ \ g(0) \in E\}\quad \text{for all
} E \subset \cx,
 \end{equation}
 and define, for $\tau \geq 0$, the set
 \[
\gamma^{\tau}(E):= \cup_{t \geq \tau} {T}(t)E\,.
 \]
 The family of operators $\{{T}(t)\}_{t \geq 0}$
  defines  a \emph{semigroup}  on
the power set $2^\cx$. Given subsets $U, E \subset \cx$, we say that
$U $ \emph{attracts} $E$ if $\semidist(T(t)E,U) \to 0$ as $t \to
+\infty$. Furthermore, we say that $U $ is  \emph{fully invariant}
if $T(t)U = U$ for every $t \geq 0$. Finally, a set $\mathcal{A}
\subset \cx$ is the  \emph{global attractor} for $\sfl$ iff  it is
compact, fully invariant under $\sfl$, and attracts all the bounded
sets of $\cx$.
\paragraph{\bf Compactness and dissipativity properties.}
 Let $ \geneset $ be a generalized semiflow. We say that $
 \geneset$ is
\begin{description}
\item \emph{\bf eventually bounded} iff, for every bounded set $B \subset
\cx$, there exists $\tau \geq 0$ such that $ \gamma^\tau (B)$ is
bounded;
\item \emph{\bf point dissipative} iff there exists a bounded
set $B_0 \subset \cx$ such that, for any $g \in \geneset$, there
exists $\tau \geq 0$ such that $g(t) \in B_0 $ for all $t \geq
\tau$.  The set $B_0$ is then called a (pointwise) \emph{absorbing} set;
\item \emph{\bf compact} iff, for any sequence $\{g_n \} \subset \geneset$
with $\{g_n (0)\}$  bounded,  there exists a subsequence $\{g_{n_k}
\} $ such that $ \{g_{n_k}(t) \} $ is convergent for any $t
>0.$
\end{description}
We note that the  notions that we have just  introduced are not
independent one from another (cf. \cite[Props. 3.1 and 3.2]{Ball97}
for more details).
\paragraph{\bf
Lyapunov function.}  The notion of a \emph{Lyapunov function} can be
introduced starting from the following definitions: we say that a
complete orbit $g \in \geneset$ is \emph{stationary} if there exists
$x \in \cx$ such that $g(t)=x$ for all $t \in \R$ - such an $x$ is
then called a \emph{rest point}.  Note that the set of rest points
of $\geneset $, denoted by $Z(\geneset)$, is closed in view of
\textbf{(P4)}. A function $V: \cx \to \R$ is said to be a
\emph{Lyapunov function} for $\geneset$ if $V$ is continuous,
$V(g(t)) \leq V(g(s))$ for all $g \in \geneset$ and $0 \leq s \leq
t$ (i.e., $V$ decreases along all solutions), and, whenever the map
$t \mapsto V(g(t))$ is constant for some complete orbit $ g$, then $
g $ is a stationary orbit.
\paragraph{\bf Existence of the global attractor.}
The following theorem subsumes the main results from~\cite{Ball97}
(cf. Thms. 3.3, 5.1, and 6.1 therein) and provides the basic
criteria for the existence of the global attractor $\att$ for a
generalized semiflow $\sfl$.
\begin{theorem}
\label{thm:ball1} Let $\sfl$ be  an eventually bounded and compact
 generalized semiflow. Assume that $\sfl$ also admits a Lyapunov function $V$ and that
\begin{equation}
\label{rest-bounded} \text{
 the set of its rest points $\rest$ is bounded.}
\end{equation}
  Then, $\sfl$ is
also point dissipative, and, consequently,  it possesses
 a global
attractor.
 Moreover, the attractor $\att$ is unique, it  is
the maximal compact fully invariant subset of $\cx$, and it can be
characterized as
\begin{equation}\label{eqn:attrattore}
\att= \bigcup \{\omega(B) \ : \ \text{$B \subset \cx$
bounded}\}=\omega(\cx).
\end{equation}
Finally, for every $g \in \sfl$,
\begin{equation}
\label{e:additional} \omega(g) \subset \rest.
\end{equation}
\end{theorem}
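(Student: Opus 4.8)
The plan is to derive the statement from Ball's theory of generalized semiflows~\cite{Ball97}, of which it is essentially a repackaging: the three steps are (a)~establishing the inclusion~\eqref{e:additional}, namely $\omega(g)\subset\rest$ for every $g\in\sfl$; (b)~deducing from the boundedness~\eqref{rest-bounded} of $\rest$ that $\sfl$ is point dissipative; and (c)~applying the abstract existence result for generalized semiflows (Ball, Thm.~3.3) to obtain the global attractor $\att$ together with its uniqueness, its maximality among the compact fully invariant subsets of $\cx$, and the representation~\eqref{eqn:attrattore}.

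For step~(a) I would fix $g\in\sfl$ and first note that, by eventual boundedness, there is $\tau\geq 0$ such that the orbit $\{g(t):t\geq\tau\}$ is bounded; together with the compactness of $\sfl$ this yields, by the standard arguments of~\cite{Ball97}, that $\omega(g)$ is non-empty, compact, fully invariant, and attracts $g$ (i.e.\ $\semidist(\{g(t)\},\omega(g))\to0$ as $t\to+\infty$). Since $V$ is continuous, it is bounded on the precompact closure of that orbit, and since it is non-increasing along $g$ the limit $\ell:=\lim_{t\to+\infty}V(g(t))$ exists in $\R$; continuity then forces $V\equiv\ell$ on $\omega(g)$. For $z\in\omega(g)$, full invariance of $\omega(g)$ provides a complete orbit $w$ with $w(0)=z$ and $w(\R)\subset\omega(g)$, along which $V$ is constant; the defining property of a Lyapunov function makes $w$ stationary, so $z$ is a rest point. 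This is exactly~\eqref{e:additional}.

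For step~(b) I would use~\eqref{rest-bounded} to fix a bounded open set $B_0\supset\rest$; by step~(a) and the attraction property, every solution $g$ satisfies $\omega(g)\subset\rest\subset B_0$ and hence $g(t)\in B_0$ for all sufficiently large $t$. Upgrading this to a genuine (pointwise) absorbing set --- possibly after enlarging $B_0$ --- is the content of Ball's Theorem~5.1 and rests on a contradiction argument that uses the compactness of $\sfl$ in an essential way; granting it, $\sfl$ is point dissipative. Then step~(c) is immediate: being eventually bounded, compact, and point dissipative, $\sfl$ has, by Ball's Theorem~3.3, a unique global attractor $\att$ which is the maximal compact fully invariant subset of $\cx$ and satisfies $\att=\bigcup\{\omega(B):B\subset\cx\text{ bounded}\}=\omega(\cx)$, i.e.~\eqref{eqn:attrattore}; and~\eqref{e:additional} has already been proved in~(a).

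I expect the main obstacle to be step~(b): the delicate point is not the pointwise statement $\omega(g)\subset\rest$ but the passage to a \emph{single} bounded set absorbing all solutions, which genuinely requires compactness of the semiflow and Ball's contradiction argument; everything else is either bookkeeping (matching the hypotheses of the abstract theorems) or standard facts about $\omega$-limit sets of asymptotically compact generalized semiflows.
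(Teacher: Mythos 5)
Your proposal is correct and matches the paper's treatment: the paper does not prove this statement at all, but quotes it as a summary of Ball's Theorems 3.3, 5.1, and 6.1 in \cite{Ball97}, and your three steps (Lyapunov/invariance argument giving $\omega(g)\subset\rest$, point dissipativity from boundedness of $\rest$, then the abstract existence theorem) are precisely how those cited results combine. Nothing further is needed.
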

\begin{remark}
\label{rem:restriction_to_invariant_set} \upshape Actually, it is
immediate to check that, if $\geneset$ is compact, eventually
bounded, and admits a Lyapunov function, then
 condition~\eqref{rest-bounded}  can be
replaced by
\begin{equation}
\label{e:saab}
\begin{gathered}
 \exists\, \mathcal{D} \subset \cx\,, \ \  \mathcal{D}\neq
 \emptyset\,, \  \ \text{such that}
 \ \ \begin{cases}
 {T}(t) \mathcal{D} \subset \mathcal{D}
\quad \forall t \geq 0,
\\
 \text{the set $ Z(\geneset) \cap \mathcal{D} $ is bounded in
$\cx$}. \end{cases}
\end{gathered}
\end{equation}
 Then,  under these hypotheses, $\geneset$ also
possesses a (unique)  global attractor  $\att \subset
\mathcal{D}$ and~\eqref{e:additional} holds.
\end{remark}
\section{Main results}
\label{s:2}
\subsection{A global existence result for  the non-viscous problem}
\label{ss:2.2}
\paragraph{Assumptions on the nonlinearities.}
We assume that
\begin{equation}
\label{e:hyp1} \tag{H1}
\begin{gathered}
\alpha: \R \to \R \qquad \text{is a strictly increasing,
differentiable function such that}
\\
\exists\, p \geq 0, \ \ \exists\, C_1,\, C_2 >0 \, : \quad \forall\,
r \in \R \qquad C_1 \left( |r|^{2p}+ 1\right) \leq \alpha'(r) \leq
C_2 \left( |r|^{2p}+1\right)\,.
\end{gathered}
\end{equation}
Clearly,  the latter growth condition entails that
\begin{equation}
\label{e:conse1} \exists\, C_3,\, C_4,\, C_5 >0 \, : \quad \forall\,
r \in \R \qquad C_3  |r|^{2p+1}-C_4 \leq \alpha(r)\text{\rm sign}(r)
\leq C_5 \left( |r|^{2p+1}+1\right)\,.
\end{equation}

Concerning the nonlinearity $\phi$,
  we require that
\begin{equation}
\label{e:hyp2}
\tag{H2}
\begin{gathered}
\text{dom}(\phi) = I, \ \ \text{$I$ being an open, possibly unbounded,
interval $(a,b)$, $ -\infty \leq a < 0 < b \leq +\infty$,}
\\
\phi \in \CC^1 (I),
\\
\lim_{r \searrow a} \phi(r) = -\infty,\qquad \lim_{r \nearrow b}
\phi(r) = +\infty,
\\
\lim_{r \searrow a} \phi'(r) =\lim_{r \nearrow b} \phi'(r) =
+\infty\,.
\end{gathered}
\end{equation}
We shall denote by $\widehat{\phi} $ (one of) the antiderivative(s)
of $\phi$. It follows from the above assumptions that
$\widehat{\phi}$ is bounded from below.
 Hereafter, for the sake of
simplicity, we assume that
\begin{equation}
\label{phicappucciopos} \widehat{\phi}(r) \geq 0 \quad \text{for all
$r \in I$.}
\end{equation}
Furthermore,  \eqref{e:hyp2} obviously  yields that
\begin{equation}
\label{e:hyp2-bis}  \exists\, C_{\phi,1} >0 \, : \ \ \forall\, r \in
I \qquad \phi'(r) \geq -C_{\phi,1}\,,
\end{equation}
namely, $\phi$ is a Lipschitz perturbation of a non-decreasing
function. In particular, we will use the fact that there exists a
non-decreasing function $\beta: I \to \R$ such that
\begin{equation}
\label{e:hyp2-aftermath} \phi(r) = \beta(r) -C_{\phi,1} r \qquad
\forall\, r \in I\,.
\end{equation}
Consequently,  $\widehat{\phi}$ is a quadratic perturbation of a
convex function.
 Arguing in
the very same way as in~\cite{mirzel04} (where the case $I= (-1,1)$
was considered), it can be proved
  that, under these conditions, the
following crucial estimate holds:
\begin{equation}
\label{e:2.2.14} \forall\, m \in (a,b)\ \  \exists\, C_m,\ C_m'>0 \,
: \ \  \forall\, r \in (a-m,b-m) \quad |\phi(r+m)| \leq C_m
\phi(r+m)r +C_m'\,.
\end{equation}
\\
 Finally, we also assume that
\begin{equation}
\label{hyp:3} \tag{H3} \exists\, \sigma \in (0,1), \ \ \exists\,
C_6>0\, : \quad \forall\, r \in (a,b) \quad |\phi(r)|^{\sigma} \leq
C_6 \left( \widehat{\phi}(r) +1\right)\,,
\end{equation}
and that the following {\em compatibility condition} holds
between $\sigma$ and the  growth index $p$ of $\alpha$
in~\eqref{e:hyp1}:
\begin{equation}
\label{hyp:4} \tag{H4} \sigma >
\max\left\{\frac{6p-3}{6p+2},0\right\} \,.
\end{equation}
Hence,  if $p\leq 1/2$, then any $\sigma  \in  (0,1)$ is admissible,
while if, for instance,  $p=1$, then the range of admissible
$\sigma$'s is $(3/8, 1)$, and it is  $(9/{14},1)$ for $p=2$.
\noindent
\begin{notation}
\label{not:2.1} \upshape Hereafter, we will use, for every $p\geq 0$,
the short-hand notation
\begin{equation}
\label{e:index-notation} \rho_p:= \frac{2p+2}{2p+1}, \qquad
\kappa_p:=\frac{6p+6}{2p+1}, \qquad \eta_{p\sigma}=
\frac{6-\sigma}{(3-3\sigma)(2p+1)}\,.
\end{equation}
 For later convenience, we note that $\rho_p$ and $\kappa_p$ are
 decreasing functions of $p$ and
\begin{equation}
\label{e:2.17} 1<\rho_p<2, \qquad 3<\kappa_p <6 \qquad  \text{for
every $p\geq 0$.}
\end{equation}
Furthermore,  it can be checked that
\begin{equation}
\label{e:really-necessary} \eta_{p\sigma}>1 \quad \text{for every} \
p \geq 0  \ \text{and for all} \   \sigma >
\max\left\{\frac{6p-3}{6p+2},0\right\}\,.
\end{equation}
\end{notation}

\paragraph{The existence result.}
We are now able to give the variational formulation of the boundary
value problem associated with~\eqref{e:1} in the non-viscous case.
\begin{mainproblem}
\label{p:1}\upshape Find a pair $(\chi,w)$ fulfilling
\begin{align}
& \label{1-var} \chi_t + A (\alpha(w)) =0 \qquad \text{in
$\sobneg{-2}{\kappa_p}$} \quad \aein\ (0,T)\,,
\\
& \label{2-var} A\chi + \phi(\chi) =w \qquad \aein \ \Omega \times
(0,T)\,.
\end{align}
\end{mainproblem}
\noindent Note that, owing to  Lemma~\ref{le:sob-dual},
\eqref{1-var} is equivalent to
\begin{equation}
\label{e:difficult}
\begin{aligned}
\pairing{\sobneg{-2}{{\kappa_p}}}{\sobneg{2}{{\kappa_p}'}}{\chi_t}{v}
 & + \pairing{\sobneg{-2}{{\kappa_p}}}{\sobneg{2}{{\kappa_p}'}}{A
(\alpha(w))}{v}=0 \\ &   \text{for all $ v \in
\sobneg{2}{{\kappa_p}'} \quad \aein \ (0,T)$.}
\end{aligned}
\end{equation}
\begin{maintheorem}
\label{th:1} Under assumptions~\eqref{e:hyp1}--\eqref{hyp:4}, for
every initial datum $\chi_0$ satisfying
\begin{equation}
\label{hyp:initial-datum} \chi_0 \in V, \qquad
\widehat{\phi}(\chi_0) \in L^1(\Omega)\,,
\end{equation}
 there exists at least a
solution $(\chi,w)$ to Problem~\ref{p:1}, with the regularity
\begin{align}
& \label{reg-chi} \chi \in L^2 (0,T;W^{2,6} (\Omega)) \cap L^\infty
(0,T;V), \qquad \chi_t \in L^{\eta_{p\sigma}}(0,T;
\sobneg{-2}{\kappa_p})\,,
 \\
& \label{reg-w0} w \in L^2 (0,T;V), \qquad \alpha(w) \in
L^{\eta_{p\sigma}}(0,T;L^{\kappa_p}(\Omega))\,,
\end{align}
fulfilling the initial condition
\begin{equation}
\label{e:init-cond} \chi(0)=\chi_0 \quad \text{in $V$.}
\end{equation}
\end{maintheorem}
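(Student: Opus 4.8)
The plan is to prove Theorem~\ref{th:1} by a Faedo--Galerkin--type approximation combined with a regularization of the singular nonlinearity $\phi$, following the time-discretization scheme outlined in the Appendix, and then passing to the limit. The essential ingredients are: (i) a good set of \emph{a priori estimates}, derived from the natural energy structure of \eqref{e:1}; (ii) the generalized Poincar\'e inequality of Lemma~\ref{le-poinca} to control $\chi$ in $V$; and (iii) the compactness criterion of Theorem~\ref{t:ds} together with Ioffe's lower-semicontinuity theorem (Theorem~\ref{th-ioffe}) to identify the nonlinear terms in the limit. I would first write \eqref{1-var}--\eqref{2-var} on an approximate level where $\phi$ is replaced by a Lipschitz, everywhere-defined approximation $\phi_\lambda$ (Yosida-type regularization of the monotone part $\beta$ in \eqref{e:hyp2-aftermath}, plus the linear term $-C_{\phi,1}r$), and where the equation is either discretized in time (as in the Appendix) or projected onto a finite-dimensional subspace. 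On that level existence is standard.

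\textbf{A priori estimates.} The basic estimate is obtained formally by testing \eqref{1-var} with $w$ (equivalently, pairing $\chi_t$ with $w$ in the duality of $\sobneg{-2}{\kappa_p}$ and $\sobneg{2}{\kappa_p'}$) and using \eqref{2-var}: this gives
\begin{equation*}
\frac{\dd}{\dd t}\left( \tfrac12 \|\nabla\chi\|_H^2 + \int_\Omega \widehat\phi(\chi)\right) + \int_\Omega \alpha(w)\,\Delta(\text{\dots})\,\dots
\end{equation*}
— more precisely, $\frac{\dd}{\dd t}\big(\tfrac12\|\nabla\chi\|_H^2 + \int_\Omega\widehat\phi(\chi)\big) + {}_{V'}\langle A(\alpha(w)),w\rangle_V = 0$, and since $\alpha$ is increasing with $\alpha(0)$ possibly nonzero one rewrites $\langle A(\alpha(w)),w\rangle_V = \int_\Omega \alpha'(w)|\nabla w|^2 \geq C_1\int_\Omega |\nabla w|^2$ using \eqref{e:hyp1}. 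Integrating in time and using \eqref{phicappucciopos} and the datum assumption \eqref{hyp:initial-datum}, one obtains $\chi$ bounded in $L^\infty(0,T;V)$, $\widehat\phi(\chi)$ bounded in $L^\infty(0,T;L^1)$, and $\nabla w$ bounded in $L^2(0,T;H)$. To upgrade this to a bound on $w$ itself in $L^2(0,T;V)$ one must control the mean value $\mo(w)$: here I test \eqref{2-var} with $\chi - \mo(\chi)$ and invoke estimate \eqref{e:2.2.14} (the Miranville--Zelik--type bound on $\phi$), getting $\int_\Omega \phi(\chi)(\chi-\mo(\chi))$ under control, hence $\int_\Omega \phi(\chi)$ and thus $\mo(w)$ bounded; combined with Poincar\'e for zero-mean functions this yields $w \in L^2(0,T;V)$. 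Next, \eqref{hyp:3}--\eqref{hyp:4} and the interpolation/Sobolev embeddings in $\R^3$ are used to bootstrap: from $w\in L^2(0,T;V)\hookrightarrow L^2(0,T;L^6)$ and the growth \eqref{e:conse1}, one estimates $\alpha(w)$ in $L^{\eta_{p\sigma}}(0,T;L^{\kappa_p})$ — this is exactly where the peculiar exponents $\kappa_p,\eta_{p\sigma}$ of Notation~\ref{not:2.1} come from, and \eqref{e:really-necessary} guarantees $\eta_{p\sigma}>1$. Comparison in \eqref{1-var} then gives $\chi_t \in L^{\eta_{p\sigma}}(0,T;\sobneg{-2}{\kappa_p})$, and elliptic regularity applied to \eqref{2-var} (testing with a suitable power of $\phi(\chi)$ or $A\chi$, using \eqref{e:hyp2-bis}) produces $\chi\in L^2(0,T;W^{2,6})$ together with $\phi(\chi)\in L^2(0,T;L^6)$.

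\textbf{Passage to the limit and identification.} With these bounds, uniform in the regularization/discretization parameter, I extract weakly(-star) convergent subsequences: $\chi_\lambda \weakstar \chi$ in $L^\infty(0,T;V)$ and $\weakto$ in $L^2(0,T;W^{2,6})$, $\chi_{\lambda t}\weakto\chi_t$ in $L^{\eta_{p\sigma}}(0,T;\sobneg{-2}{\kappa_p})$, $w_\lambda\weakto w$ in $L^2(0,T;V)$. An Aubin--Lions--Simon argument (using $V\Subset H$ and the $\chi_t$ bound) gives $\chi_\lambda\to\chi$ strongly in, say, $C^0([0,T];H)$ and a.e. in $\Omega\times(0,T)$, which lets me pass to the limit in \eqref{2-var}: the term $\phi_\lambda(\chi_\lambda)\to\phi(\chi)$ a.e., and since $\phi_\lambda(\chi_\lambda)$ is bounded in $L^2(0,T;L^6)$, by Theorem~\ref{t:ds} (2-uniform integrability, ensured by that bound via de la Vall\'ee Poussin) the convergence is strong in $L^2(0,T;L^2)$; the a.e. limit also forces $\chi\in I$ a.e. Linear terms pass trivially. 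For \eqref{1-var} the delicate point is the nonlinear term $A(\alpha(w_\lambda))$: I only have weak convergence of $w_\lambda$, not a.e. convergence, so I cannot directly conclude $\alpha(w_\lambda)\rightharpoonup\alpha(w)$. \emph{This identification of the limit of $\alpha(w_\lambda)$ is the main obstacle.} The plan is to use monotonicity: $\alpha$ (being increasing) gives a maximal monotone operator, and one shows $\limsup_\lambda \int_0^T\!\!\int_\Omega \alpha(w_\lambda)\,w_\lambda \leq \int_0^T\!\!\int_\Omega \ell\, w$ where $\ell$ is the weak limit of $\alpha(w_\lambda)$, by exploiting the energy identity (the left-hand side equals a boundary/initial term via \eqref{1-var} tested with $w_\lambda$, which passes to the limit by lower semicontinuity of the energy $\int_\Omega\widehat\phi(\cdot)$ — here Theorem~\ref{th-ioffe} applies, using a.e.\ convergence of $\chi_\lambda$). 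The standard Minty--Browder trick then yields $\ell=\alpha(w)$ a.e. Finally, the initial condition \eqref{e:init-cond} follows from $\chi\in C^0([0,T];H)\cap C_w([0,T];V)$ and the strong convergence at $t=0$; membership $\widehat\phi(\chi_0)\in L^1$ is preserved by Fatou. This completes the proof.
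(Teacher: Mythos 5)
Your a priori estimates follow the paper's line faithfully (energy estimate, conservation of the mean, the \eqref{e:2.2.14} trick to control $\mo(w)$, the nonlinear Poincar\'e inequality of Lemma~\ref{le-poinca}, elliptic regularity in \eqref{2-var}), but the limit passage has two genuine gaps. First, the claim that the bound for $\phi(\chi_\lambda)$ in $L^2(0,T;L^6(\Omega))$ yields $2$-uniform integrability ``via de la Vall\'ee Poussin'' and hence strong convergence in $L^2(0,T;H)$ is not justified: an $L^2(0,T;L^6)$ bound gives integrability gain in space but no equi-integrability in time, so Theorem~\ref{t:ds} cannot be applied with $p=2$. The paper needs one more bootstrap step (the ``sixth/seventh estimates''), namely \eqref{hyp:3}--\eqref{hyp:4} plus interpolation and \eqref{e:to-be-cited3} to get $\phi(\chi)\in L^{\sigma q_\sigma}(0,T;L^6(\Omega))$ with $\sigma q_\sigma>2$ (estimate \eqref{useful2}); only this extra time-integrability gives the uniform integrability in $L^2(0,T;H)$ used in \eqref{e:cruc-pass1}. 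Your proposal never derives this estimate, and it is exactly where the compatibility condition \eqref{hyp:4} enters.

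Second, and more importantly, your identification of the weak limit of $\alpha(w_\lambda)$ is the wrong route for the non-viscous problem, and the sketch you give does not work as stated: testing \eqref{1-var} with $w_\lambda$ produces the dissipation $\int_\Omega \alpha'(w_\lambda)|\nabla w_\lambda|^2$, not $\int_\Omega \alpha(w_\lambda)w_\lambda$, so the energy identity does not furnish the Minty inequality $\limsup\int\!\!\int \alpha(w_\lambda)w_\lambda\le\int\!\!\int \bar\alpha\, w$. To produce that quantity one must write $\alpha(w_\lambda)-\mo(\alpha(w_\lambda))=-\mn(\chi_{\lambda t})$ and pair with $w_\lambda$, which is how the paper argues \emph{in the viscous case} (Section~\ref{s:3.3}), where $\chi_t\in L^2(0,T;H)$; with $\delta=0$ the time derivative is only bounded in $L^{\eta_{p\sigma}}(0,T;\sobneg{-2}{\kappa_p})$ and the resulting duality pairings (e.g.\ $\int\!\!\int\phi(\chi_\lambda)\mn(\chi_{\lambda t})$) are not controlled under \eqref{e:hyp1}--\eqref{hyp:4} (one would need $\sigma q_\sigma\ge 2p+2$, which \eqref{hyp:4} does not guarantee), nor is a suitable maximal-monotonicity duality available. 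The paper instead uses a simpler and more robust argument specific to $\delta=0$: test \eqref{2-var} at the approximate level by $w_n$ and pass to the limit in $\int\!\!\int|w_n|^2=\int\!\!\int\phi(\chi_n)w_n+\int\!\!\int\nabla\chi_n\cdot\nabla w_n$, using the strong convergence of $\phi(\chi_n)$ (from the seventh estimate) and of $\nabla\chi_n$ against the weak convergence of $w_n$; this gives convergence of $L^2$-norms, hence $w_n\to w$ strongly and a.e., and then $\bar\alpha=\alpha(w)$ follows from continuity of $\alpha$, the bound \eqref{e:est-refined_1} (with $\eta_{p\sigma}>1$), and Theorem~\ref{t:ds}. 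You should replace your Minty step by this norm-convergence argument (or otherwise supply the missing limsup inequality, which seems out of reach with the available regularity). Finally, note that ``existence at the approximate level is standard'' hides a real issue: the viscous term $\delta\chi_t$ is incompatible with the $L^6$-estimate on $\phi(\chi)$ (Remark~\ref{solo-formale}), which is why the paper uses a three-parameter scheme and removes $\delta$ first.
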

\noindent A   formal proof of this result  will be developed
 in Section~\ref{s:3} and rigorously justified in Appendix.
\subsection{A global existence result for the viscous problem}
\label{ss:2.3}
 We replace our
assumptions~\eqref{e:hyp2}--\eqref{hyp:4} on $\phi$ and its
antiderivative $\widehat{\phi}$ by
\begin{equation}
\label{hyp:5} \tag{H5}
\begin{gathered}
 \widehat{\phi} : \R \to \R \quad \text{belongs to $\CC^2 (\R)$ and satisfies}
\\
 \exists\, C_7>0\,
: \quad \forall\,r \in  \R  \quad |\phi(r)| \leq C_7 \left(
\widehat{\phi}(r) +1\right)\,.
\end{gathered}
\end{equation}
The latter assumption means that we consider potentials with at most
an exponential growth at $\infty$, and it clearly yields that
$\widehat{\phi} $ is bounded from below. Hence, as
in~\eqref{phicappucciopos},  we again assume that $\widehat{\phi}$
takes non-negative values.
 Furthermore,  as
in the non-viscous case  we require that
\begin{equation}
\label{e:hyp2-bis-visco} \tag{H6}
\begin{gathered}
 \exists\, C_{\phi,2} >0 \, : \ \ \forall\, r \in \R
\qquad \phi'(r) \geq -C_{\phi,2}\,.
\end{gathered}
\end{equation}
This and~\eqref{hyp:5} imply that \text{the map}
\begin{equation}
\label{e:lambda-convex}   \ r \in \R \mapsto \widehat{\phi}(r)
+\frac{C_{\phi,2}}2 r^2 \ \ \text{is convex and bounded from below.}
\end{equation}
\begin{remark}
\upshape Let us point out that~\eqref{e:lambda-convex} yields
\begin{equation}
\label{e:phi-add} |\widehat{\phi}(r)| \leq  |\widehat{\phi}(0)| +
|\phi(r)||r|+ \frac{C_{\phi,2}}{2}r^2    \quad \text{for all $r \in
\R$.}
\end{equation}
Indeed, it follows from~\eqref{e:lambda-convex} and an elementary
convexity inequality that, for every $r \in \R$,
\[
\widehat{\phi}(0) - \widehat{\phi}(r) - \frac{C_{\phi,2}}{2}r^2 \geq
-r\left( \phi(r) + C_{\phi,2}r \right)\,,
\]
whence we deduce~\eqref{e:phi-add} with straightforward algebraic
manipulations.
\end{remark}

We  will address  the analysis of the Cahn-Hilliard
equation~\eqref{e:1} in the viscous case under the aforementioned
assumptions. The related variational formulation reads
\begin{mainproblem}
\label{p:2}\upshape  Given $\delta >0$,  find a pair $(\chi,w)$
fulfilling
\begin{align}
& \label{1-var-better}
 \chi_t + A (\alpha(w)) =0 \qquad \aein \  \Omega \times (0,T)\,,
 \\
 &
 \label{2-var-better} \delta \chi_t+ A\chi + \phi(\chi) =w \qquad \aein \ \Omega \times
(0,T)\,.
 \end{align}
\end{mainproblem}
\paragraph{The existence result.}
\begin{maintheorem}
\label{th:2} Assume \eqref{e:hyp1}, \eqref{hyp:5},
and~\eqref{e:hyp2-bis-visco}. Then, for every initial datum $\chi_0$
complying with~\eqref{hyp:initial-datum}, there exists at least a
solution $(\chi,w)$ to Problem~\ref{p:2}, with the regularity
\begin{align}
& \label{reg-chi-bis} \chi \in L^2 (0,T;Z) \cap L^\infty (0,T;V)
\cap H^1 (0,T;H)\,,
 \\
& \label{reg-w} w \in L^2 (0,T;V) \cap
L^{2p+2}(0,T;L^\infty(\Omega)), \qquad \alpha(w) \in
L^{\rho_p}(0,T;Z)\,,
\end{align}
and such that $\chi$ satisfies the initial
condition~\eqref{e:init-cond}.
\end{maintheorem}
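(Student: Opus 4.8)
The plan is to construct solutions via the time-discretization scheme described in the Appendix, derive the a priori estimates implied by the regularity spaces in~\eqref{reg-chi-bis}--\eqref{reg-w}, and pass to the limit using the compactness tools collected in Section~\ref{ss:2.1}. First I would set up the implicit Euler scheme for Problem~\ref{p:2}: at each step solve the elliptic system $\tfrac{1}{\tau}(\chin-\chi^{n-1}_\tau) + A(\alpha(\wmd^n_\tau))=0$, $\delta\tfrac{1}{\tau}(\chin-\chi^{n-1}_\tau)+A\chin+\phi(\chin)=\wmd^n_\tau$, whose solvability follows from monotonicity (the map is the subdifferential of a coercive convex functional once $\phi$ is shifted by $C_{\phi,2}$ as in~\eqref{e:hyp2-aftermath}). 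Conservation of mean value for $\chi$ is immediate from testing~\eqref{1-var-better} by $1$, so $\mo(\chi(t))=\mo(\chi_0)$ throughout.

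The core is the \emph{energy estimate}. Testing the first equation by $\mn(\chi_t)$ (legitimate since $\chi_t$ has zero mean) and the second by $\chi_t$, then adding, yields formally
\[
\frac{\dd}{\dd t}\left( \frac12\|A\chi\|_{\text{pairing}}^2 + \io \widehat{\phi}(\chi) \right) + \delta\|\chi_t\|_H^2 + \io \alpha(\wmd)\,\wmd = 0,
\]
wait — more precisely one tests~\eqref{1-var-better} by $\wmd$ and~\eqref{2-var-better} by $\chi_t$: adding gives $\delta\|\chi_t\|_H^2 + \tfrac{\dd}{\dd t}\big(\tfrac12\|\nabla\chi\|_H^2 + \io\widehat\phi(\chi)\big) + \io \alpha(\wmd)\wmd=0$. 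By~\eqref{e:conse1}, $\alpha(r)r\geq C_3|r|^{2p+2}-C_4'$, so integrating in time controls $\chi\in L^\infty(0,T;V)\cap H^1(0,T;H)$, $\widehat\phi(\chi)\in L^\infty(0,T;L^1)$, and $\wmd\in L^{2p+2}(0,T;L^{2p+2}(\Omega))$; using~\eqref{hyp:5} this also bounds $\phi(\chi)$ and then, comparing in~\eqref{2-var-better}, $A\chi\in L^2(0,T;H)$ together with elliptic regularity gives $\chi\in L^2(0,T;Z)$. A second estimate testing~\eqref{1-var-better} by $\alpha(\wmd)$ and exploiting $\alpha'\geq C_1$ produces $\nabla\wmd\in L^2(0,T;H)$, i.e. $\wmd\in L^2(0,T;V)$, hence by Sobolev embedding and the growth~\eqref{e:hyp1} one upgrades $\alpha(\wmd)\in L^{\rho_p}(0,T;Z)$ and $\wmd\in L^{2p+2}(0,T;L^\infty(\Omega))$ in dimension three (the $\rho_p$ exponent comes precisely from $\alpha$ mapping $L^{2p+2}$-type spaces into $L^{\rho_p}$ in time). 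All of these must be justified at the discrete level on the piecewise-constant/piecewise-linear interpolants $\pwC{\chi}{\tau}$, $\pwL{\chi}{\tau}$, etc., uniformly in $\tau$, which is the content of the Appendix.

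For the passage to the limit I would extract (Aubin–Lions, using $Z\Subset V\Subset H$ and the $H^1(0,T;H)$ bound) a subsequence with $\chi_\tau\to\chi$ strongly in $C([0,T];H)$ and a.e. in $\Q$, $\chi_\tau\weakto\chi$ in $L^2(0,T;Z)\cap H^1(0,T;H)$, $\wmd_\tau\weakto\wmd$ in $L^2(0,T;V)$, and $\alpha(\wmd_\tau)\weakto \xi$ in $L^{\rho_p}(0,T;Z)$. The linear terms pass trivially. Identifying $\xi=\alpha(\wmd)$ is done by Minty's trick using monotonicity of $\alpha$ (or, alternatively, from a.e.\ convergence of $\wmd_\tau$ obtained via the $L^{2p+2}$ bound and uniform integrability through Theorem~\ref{t:ds}); identifying $\phi(\chi_\tau)\to\phi(\chi)$ follows from the a.e.\ convergence of $\chi_\tau$ plus the $L^{2p+2}$-type control and~\eqref{hyp:5}, converting a.e.\ convergence into strong $L^1$ convergence via Theorem~\ref{t:ds}, or via a generalized dominated convergence. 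The initial condition $\chi(0)=\chi_0$ survives the limit because of the $C([0,T];H)$ convergence and weak convergence in $V$. I expect the main obstacle to be the rigorous derivation of the $\alpha(\wmd)\in L^{\rho_p}(0,T;Z)$ bound and the accompanying $\wmd\in L^{2p+2}(0,T;L^\infty(\Omega))$ estimate: these require carefully combining the two formal energy estimates with the growth condition~\eqref{e:hyp1}, elliptic regularity for $A$, and three-dimensional Sobolev embeddings, and then reproducing the whole chain at the discrete level where test functions like $\mn(\partial_t\chi_\tau)$ must be replaced by their difference-quotient analogues and the chain rule for $\widehat\phi$ becomes a convexity inequality.
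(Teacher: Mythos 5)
There is a genuine gap at the heart of your a priori estimates. The energy identity you write is wrong: testing \eqref{1-var-better} by $w$ produces the dissipation term $\int_\Omega \nabla\alpha(w)\cdot\nabla w=\int_\Omega \alpha'(w)|\nabla w|^2$ (this is \eqref{est:1}), \emph{not} $\int_\Omega \alpha(w)\,w$, because $A$ acts through the gradient pairing \eqref{def:opA}. Consequently your claim that \eqref{e:conse1} immediately yields $w\in L^{2p+2}(0,T;L^{2p+2}(\Omega))$ from the energy balance is unfounded; what the energy estimate actually gives is $\nabla w\in L^2(0,T;H)$ and $\nabla(|w|^pw)\in L^2(0,T;H)$ (cf.\ \eqref{e:stima1}, \eqref{est:3.14}). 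Since the problem carries Neumann-type boundary conditions, these gradient bounds alone give no control of $w$ itself: the missing — and essential — step is the mean-value argument. In the paper one tests \eqref{1-var-better} by $1$ to get conservation of $\mo(\chi)$, then tests \eqref{2-var-better} by $1$ to get $\mo(w)=\mo(\phi(\chi))$ \eqref{est:2}, bounds $\phi(\chi)$ in $L^\infty(0,T;L^1(\Omega))$ through \eqref{hyp:5} and the energy bound on $\widehat\phi(\chi)$, and only then recovers $w\in L^2(0,T;V)$ by Poincar\'e and, crucially, $|w|^pw\in L^2(0,T;V)$ via the \emph{nonlinear} Poincar\'e inequality of Lemma~\ref{le-poinca} applied as in \eqref{e:poinc-est}. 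Your sketch never mentions $\mo(w)$ nor Lemma~\ref{le-poinca}, yet without them the whole chain leading to $\phi(\chi)\in L^2(0,T;H)$, $\chi\in L^2(0,T;Z)$, $\alpha(w)\in L^{\rho_p}(0,T;Z)$ (which comes from comparison in \eqref{1-var-better}, $A(\alpha(w))=-\chi_t\in L^2(0,T;H)$, plus elliptic regularity) and finally $w\in L^{2p+2}(0,T;L^\infty(\Omega))$ collapses. Your auxiliary "second estimate" (testing \eqref{1-var-better} by $\alpha(w)$) does not repair this, since the term $\int\chi_t\,\alpha(w)$ is not controlled at that stage, and it is in any case unnecessary because $\nabla w\in L^2(Q)$ already follows from the first estimate and $\alpha'\geq C_1$.

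Two further points. First, in the identification of the limit of $\alpha(w_n)$ your primary route (maximal monotonicity/Minty in the duality $(L^{2p+2},L^{\rho_p})$) is exactly the paper's, but the real work is the limsup inequality \eqref{3.40}, which the paper obtains by rewriting $\int\alpha(w_n)w_n$ through the equation (testing with $\mn(\chi_{n,t})$, using \eqref{2-var-better}, strong convergence of $\phi(\chi_n)$ and of the means $\mo(w_n)$); your alternative — getting a.e.\ convergence of $w_\tau$ from the $L^{2p+2}$ bound and uniform integrability — is not valid, since no time-compactness is available for $w$ and boundedness plus uniform integrability never yields pointwise convergence. Second, your approximation by implicit Euler with the untruncated $\alpha,\phi$ is a different scheme from the paper's (which truncates $\alpha$ and $\phi$ to Lipschitz functions and invokes the known well-posedness of the resulting viscous problem); the claimed solvability of your discrete system "by convexity/subdifferential structure" is not obvious, because the coupling through $\alpha(w^n)$ does not make the system a gradient of a single convex functional, so this step would need a genuine argument (e.g.\ eliminating $w^n$ or a fixed-point/pseudomonotonicity argument) or, more simply, the paper's truncation strategy.
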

\noindent We refer to Section~\ref{s:3} for a formal proof of
Theorem~\ref{th:2} and to  Appendix for all rigorous calculations.

In addition, we also have the following regularity result, which
plays a key role in Section~\ref{ss:2.4}.
\begin{proposition}
\label{prop:regularized}
 Assume \eqref{e:hyp1}, \eqref{hyp:5},
and~\eqref{e:hyp2-bis-visco}. Assume that, in addition, $\phi$ satisfies
\begin{equation}
\label{e:addphi} \widehat{\phi} \in \mathrm{C}^2 (\R) \ \
\text{and} \ \ \exists\, C_{\phi,3}>0\,: \ \ \forall\, r \in \R
\quad |\phi{'}(r)| \leq C_{\phi,3}(1+|r|^4)\,.
\end{equation}
Then, for all $0 <\tau <T$,
 the pair $(\chi,w)$ has the further regularity
\begin{align}
\label{e:further-reg-chi} & \chi \in L^\infty(\tau, T; Z) \cap H^1
(\tau,T; V)\,,
\\
& \label{e:further-reg-w}
 \alpha(w) \in L^{\rho_p} (\tau, T; H^3(\Omega))\,.
\end{align}
In particular, if $\chi_0\in Z$, then the above properties hold for
any $\tau\in [0,T)$.
\end{proposition}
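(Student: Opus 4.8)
The strategy is to establish, on the regularized and time-discrete approximating scheme of the Appendix — on which all the manipulations below are rigorous and yield bounds uniform in the approximation parameters — a higher-order \emph{a priori} estimate obtained by differentiating the viscous system \eqref{1-var-better}--\eqref{2-var-better} in time. Such a regularization is needed anyway, since \eqref{e:hyp1} only grants $\alpha\in\mathrm{C}^1$: on the scheme we may additionally take $\alpha$ smooth with $|\alpha''(r)|\le C(1+|r|^{2p-1})$, and $\phi$ smooth with the growth \eqref{e:addphi} preserved. Differentiating \eqref{2-var-better} gives $\delta\chi_{tt}+A\chi_t+\phi'(\chi)\chi_t=w_t$; testing by $\chi_t$, integrating by parts, and rewriting, via \eqref{1-var-better} in the form $\chi_t=-A(\alpha(w))$, $\la w_t,\chi_t\ra=-\int_\Omega\alpha'(w)\nabla w_t\cdot\nabla w\,\dd x=-\tfrac12\tfrac{\dd}{\dd t}\int_\Omega\alpha'(w)|\nabla w|^2\,\dd x+\tfrac12\int_\Omega\alpha''(w)w_t|\nabla w|^2\,\dd x$, I would arrive at
\[
\frac{\dd}{\dd t}\Big(\tfrac\delta2\|\chi_t\|_H^2+\tfrac12\!\int_\Omega\!\alpha'(w)|\nabla w|^2\,\dd x\Big)+\|\nabla\chi_t\|_H^2+\int_\Omega\phi'(\chi)|\chi_t|^2\,\dd x=\tfrac12\!\int_\Omega\!\alpha''(w)w_t|\nabla w|^2\,\dd x,
\]
where $\int_\Omega\phi'(\chi)|\chi_t|^2\,\dd x\ge-C_{\phi,2}\|\chi_t\|_H^2$ by \eqref{e:hyp2-bis-visco}. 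To gain control on $w_t$ I would pair this with a companion estimate: writing $\mu:=\alpha(w)$ (so $\chi_t=-A\mu$, $\chi_{tt}=-A\mu_t$, $\mu_t=\alpha'(w)w_t$) and differentiating \eqref{2-var-better} as $\tfrac1{\alpha'(w)}\mu_t=-\delta A\mu_t-A^2\mu-\phi'(\chi)A\mu$, then testing by $\mu_t$ and using $A\mu=-\chi_t$, one obtains
\[
\frac12\frac{\dd}{\dd t}\|\chi_t\|_H^2+\int_\Omega\alpha'(w)|w_t|^2\,\dd x+\delta\|\nabla\mu_t\|_H^2=\int_\Omega\phi'(\chi)\chi_t\mu_t\,\dd x.
\]

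Adding the two identities and absorbing the right-hand sides is the crux, and the step I expect to be the main obstacle: the terms $\int_\Omega\alpha''(w)w_t|\nabla w|^2\,\dd x$ and $\int_\Omega\phi'(\chi)\chi_t\mu_t\,\dd x$ are borderline in dimension $3$ and must be handled by combining the growth bounds \eqref{e:hyp1} (on $\alpha'$, $\alpha''$) and \eqref{e:addphi} (on $\phi'$), the Sobolev embeddings $V\hookrightarrow L^6(\Omega)$, $Z\hookrightarrow L^\infty(\Omega)$, Gagliardo--Nirenberg interpolation and Young's inequality (to absorb into the dissipation $\|\nabla\chi_t\|_H^2+\int_\Omega\alpha'(w)|w_t|^2\,\dd x+\delta\|\nabla\mu_t\|_H^2$), and the baseline regularity \eqref{reg-chi-bis}--\eqref{reg-w} from Theorem~\ref{th:2}, which supplies coefficients that are time-integrable but \emph{not} bounded near $t=0$. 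This leads to $y'(t)+c\|\nabla\chi_t(t)\|_H^2\le a(t)y(t)+b(t)$, where $y(t):=\delta\|\chi_t(t)\|_H^2+\int_\Omega\alpha'(w(t))|\nabla w(t)|^2\,\dd x$, $c>0$, and $a,b\in L^1(0,T)$. Since $y$ need not be bounded as $t\down0$ (unless the datum is more regular — whence the restriction to $[\tau,T]$, $\tau>0$), I would close the estimate with a uniform Gronwall argument: using $y\in L^1(0,T)$ (a consequence of Theorem~\ref{th:2}), for fixed $\tau\in(0,T)$ pick $s\in(\tau/2,\tau)$ with $y(s)$ finite and quantitatively bounded, then integrate on $(s,T)$. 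This gives $\chi_t\in L^\infty(\tau,T;H)\cap L^2(\tau,T;V)$; combined with $\chi\in L^\infty(0,T;V)$ from \eqref{reg-chi-bis}, this is $\chi\in H^1(\tau,T;V)$.

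The remaining regularity follows by bootstrap. From \eqref{1-var-better}, $\alpha(w)=\mn(-\chi_t)+m(t)$ with $m(t):=\mo(\alpha(w))$: since $\chi_t\in L^\infty(\tau,T;H)$ one has $\mn(-\chi_t)\in L^\infty(\tau,T;Z)\hookrightarrow L^\infty(\tau,T;L^\infty(\Omega))$, while integrating \eqref{2-var-better} over $\Omega$ (and using that $\mo(\chi)$ is conserved) shows $\mo(w)=\tfrac1{|\Omega|}\int_\Omega\phi(\chi)\in L^\infty(0,T)$, which forces $m\in L^\infty(\tau,T)$ as $\alpha^{-1}$ is a homeomorphism; hence $\alpha(w)\in L^\infty(\tau,T;Z)$ and $w=\alpha^{-1}(\alpha(w))\in L^\infty(\tau,T;L^\infty(\Omega))$. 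Using also $\chi_t\in L^2(\tau,T;V)$ and elliptic regularity for the Neumann Laplacian, $\mn(-\chi_t)\in L^2(\tau,T;H^3(\Omega))$, so $\alpha(w)\in L^2(\tau,T;H^3(\Omega))\subset L^{\rho_p}(\tau,T;H^3(\Omega))$ (recall $\rho_p<2$, cf.~\eqref{e:2.17}). Finally, recasting \eqref{2-var-better} as $A\chi+\beta(\chi)=w-\delta\chi_t+C_{\phi,2}\chi$ with $\beta(r):=\phi(r)+C_{\phi,2}r$ non-decreasing (by \eqref{e:hyp2-bis-visco}), the right-hand side lies in $L^\infty(\tau,T;H)$; testing by $A\chi$ and using $\int_\Omega\beta(\chi)A\chi\,\dd x=\int_\Omega\beta'(\chi)|\nabla\chi|^2\,\dd x\ge0$ yields $\|A\chi\|_H\le\|w-\delta\chi_t+C_{\phi,2}\chi\|_H$ a.e.\ in $(\tau,T)$, i.e.\ $\chi\in L^\infty(\tau,T;Z)$. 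This proves \eqref{e:further-reg-chi}--\eqref{e:further-reg-w}. If in addition $\chi_0\in Z$, then (reasoning on the approximating problems, with uniform bounds) $\chi_t(0)\in H$ and $y(0)<+\infty$, so the Gronwall estimate runs on all of $(0,T)$ and the conclusions hold for every $\tau\in[0,T)$.
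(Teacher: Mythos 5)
Your bootstrap (second half) is fine, but the core of the argument is not: the differential inequality $y'(t)+c\|\nabla\chi_t(t)\|_H^2\le a(t)y(t)+b(t)$ with $a,b\in L^1(0,T)$ is asserted, not proved, precisely at the step you yourself flag as the main obstacle, and as set up it does not close. After Young's inequality against the dissipation $\int_\Omega\alpha'(w)|w_t|^2$, the term $\int_\Omega\alpha''(w)\,w_t\,|\nabla w|^2$ leaves you with $\int_\Omega(1+|w|^{2p-2})|\nabla w|^4$, which is \emph{quadratic} in the quantity $\int_\Omega\alpha'(w)|\nabla w|^2$ contained in $y$, so it cannot be recast as $a(t)y(t)$ with a coefficient independent of $y$; the natural alternative, interpolating through $\nabla w\in L^4(\Omega)$ or $L^6(\Omega)$, is only available via $\alpha(w)\in L^{\rho_p}(0,T;Z)$ from \eqref{reg-w}, i.e.\ with time exponent $\rho_p/2<1$, so the would-be Gronwall coefficient is not in $L^1(0,T)$. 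A related (milder) difficulty affects $\int_\Omega\phi'(\chi)\chi_t\mu_t$, where $\mu_t=\alpha'(w)w_t$ carries the weight $|w|^{2p}$. In addition, the time-differentiated identities presuppose $\chi_{tt}$, $w_t$ and $\alpha\in\mathrm{C}^2$ at the approximate level, whereas the scheme of the Appendix uses the piecewise affine truncation $\alpha_M$ and the truncated $\phi_\mu$, which are not $\mathrm{C}^2$; the extra smoothing you invoke would have to be constructed and its compatibility with the existing estimates checked, none of which is addressed.

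The paper's proof avoids all of this by never differentiating in time: one tests \eqref{2-var-better} by $A\chi_t$, so neither $\alpha''$ nor $w_t$ appears. The term $\int_\Omega\nabla w\cdot\nabla\chi_t$ is absorbed into $\delta\|\nabla\chi_t\|_H^2$ using only $\nabla w\in L^2(0,T;H)$, while $-\int_\Omega\phi'(\chi)\nabla\chi\cdot\nabla\chi_t$ is controlled by \eqref{e:addphi}, the embedding $Z\subset W^{1,6}(\Omega)$, and the interpolation bound $\chi\in L^8(0,T;L^{12}(\Omega))$ coming from \eqref{reg-chi-bis}; the uniform Gronwall lemma then yields $A\chi\in L^\infty(\tau,T;H)$ and $\nabla\chi_t\in L^2(\tau,T;H)$ simultaneously, and \eqref{e:further-reg-w} follows by comparison in \eqref{1-var-better} together with \eqref{reg-w}. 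Your reconstruction of $\alpha(w)$ from $\mn(-\chi_t)$ plus the mean value, and the elliptic estimate giving $\chi\in L^\infty(\tau,T;Z)$, are correct once $\chi_t\in L^\infty(\tau,T;H)\cap L^2(\tau,T;V)$ is known, but that input is exactly what your unproven inequality was supposed to deliver; to repair the argument you should either supply the missing absorption (which I do not see how to do for general $p\ge 0$ with the available regularity) or switch to the test function $A\chi_t$ as in the paper.
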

\begin{remark}
\label{uniform} \upshape
 From the proof of
Proposition~\ref{prop:regularized}, it is not difficult to recover a
uniform estimate of the following form:
\begin{equation}
\label{uniform-gronwall} \Vert\chi\Vert_{ L^\infty(\tau, T; Z) \cap
H^1 (\tau,T; V)} +
 \Vert\alpha(w)\Vert_{L^{\rho_p} (\tau, T; H^3(\Omega))}
 \le Q(\tau^{-1},\Vert \chi_0\Vert_V),
\end{equation}
where $Q$ is a suitable function which is nondecreasing with respect to both
arguments.
\end{remark}
%
\subsection{Well-posedness for the viscous problem}
\paragraph{Continuous dependence on the initial data and uniqueness.}
We will prove  uniqueness (and continuous dependence) results for
Problem~\ref{p:2} under more  restrictive assumptions on $\alpha$
and on the growth of the  function $\phi$.  In particular, we are
going to consider two sets of assumptions.

First, we will suppose that
 $\phi$ behaves like a polynomial of degree at most
$3$. For the sake of simplicity and without loss of generality, we
will carry out our analysis in the case when $\phi$ is the
derivative of the  double-well potential
$\widehat{\phi}(r)=(r^2-1)^2/4$. Furthermore, we will
replace~\eqref{e:hyp1} by
\begin{equation}
\label{e:hyp1-bis} \tag{H7}
\begin{gathered}
\alpha: \R \to \R \qquad \text{is a strictly increasing and
differentiable function such that}
\\
 \ \exists\, C_{9},\, C_{10} >0 \, : \quad \forall\, r
\in \R \qquad C_{9} \leq \alpha'(r) \leq C_{10}\,,
\end{gathered}
\end{equation}
and~\eqref{hyp:5}--\eqref{e:hyp2-bis-visco} by
\begin{equation}
\label{e:hyp7-bis} \tag{H8} \phi(r) =r^3-r \qquad \forall\, r \in
\R\,.
\end{equation}
\begin{theorem}
\label{th:3} Assume~\eqref{e:hyp1-bis} and~\eqref{e:hyp7-bis}.  Let
$\chi_{0}^{1}$ and $\chi_{0}^{2} $ be two
    initial  data
    for Problem~\ref{p:2}
    fulfilling~\eqref{hyp:initial-datum} and set
$M_{*}:= \max_{i=1,2} \{\| \chi_{0}^{i}\va \}
    $; let $ \chi_{i} $, $ i=1,2 $, be the corresponding
solutions.
    Then, for every $\delta>0$, there exists a positive constant $ S_{\delta} $, also depending on
\begin{equation}
\label{e:only-depe}
    \text{$ M_{*} $,
    $ T $, $ |\Omega| $, $C_{9}$, and $C_{10} $,}
\end{equation}
    such that
      \begin{equation}
      \label{contdepV}
\| \chi_1 (t) - \chi_2 (t) \|_{V}   + \| \chi_1 - \chi_2 \|_{H^1
(0,t;H) \cap L^2 (0,t;Z)}
 \leq
    S_{\delta}   \|\chi_{0}^{1} - \chi_{0}^{2} \|_{V}
           \quad \forall t \in
         [0,T].
    \end{equation}
\end{theorem}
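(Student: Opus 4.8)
The plan is to work with the difference of two solutions and to derive, by energy-type testing, a pair of differential inequalities that close via Gronwall's lemma. The crucial structural point is that~\eqref{e:hyp1-bis} only bounds $\alpha'$ (no control on $\alpha''$), so $\alpha$ must never be ``differentiated through''; this is made possible by a self-adjointness manipulation combined with the first equation. Throughout, $C$ denotes a constant, possibly changing from line to line, depending only on $M_*$, $|\Omega|$, $C_9$, $C_{10}$ (and additionally on $\delta$, $T$ where indicated). Set $\overline{\chi}:=\chi_1-\chi_2$, $\overline{w}:=w_1-w_2$; subtracting the two copies of~\eqref{1-var-better} and of~\eqref{2-var-better} gives
\begin{align*}
& \overline{\chi}_t + A\big(\alpha(w_1)-\alpha(w_2)\big) = 0, \\
& \delta\,\overline{\chi}_t + A\overline{\chi} + \big(\phi(\chi_1)-\phi(\chi_2)\big) = \overline{w},
\end{align*}
a.e.\ in $\Omega\times(0,T)$. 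Since~\eqref{e:hyp1-bis}--\eqref{e:hyp7-bis} fall under the hypotheses of Theorem~\ref{th:2} with growth index $p=0$, the solutions enjoy~\eqref{reg-chi-bis}--\eqref{reg-w}; in particular $\chi_i\in L^2(0,T;Z)\cap H^1(0,T;H)$ and $w_i\in L^2(0,T;V)$, so every term above lies in $L^2(0,T;H)$, and $\alpha(w_i)\in L^2(0,T;V)$. Moreover, the basic energy estimate for Problem~\ref{p:2} — testing~\eqref{1-var-better} with $w_i$ and~\eqref{2-var-better} with $\chi_{i,t}$, using $\widehat{\phi}\geq0$ and $\int_\Omega A\alpha(w_i)=0$ (whence the mean of $\chi_i$ is conserved) — yields $\|\chi_i\|_{L^\infty(0,T;V)}\leq N_*$ with $N_*$ depending only on $M_*$ and $|\Omega|$; this $N_*$ is the only nonlinear datum entering the constants.

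\emph{First differential inequality.} Test the first difference equation with $g:=\alpha(w_1)-\alpha(w_2)\in L^2(0,T;V)$, and the second with the same $g$. The first test gives $\int_\Omega\overline{\chi}_t\,g = -\|\nabla g\|_H^2$; inserting this into the second removes $\int_\Omega\overline{\chi}_t\,g$ in favour of $-\delta\|\nabla g\|_H^2$. The dangerous term $\langle A\overline{\chi},g\rangle$ is rewritten, using the symmetry of $A$ on $V$ and then the first equation $Ag=-\overline{\chi}_t$ (an identity in $V'$, since it holds in $H$), as
\[
\langle A\overline{\chi},g\rangle = \langle Ag,\overline{\chi}\rangle = -\int_\Omega\overline{\chi}_t\,\overline{\chi} = -\frac12\frac{d}{dt}\|\overline{\chi}\|_H^2 .
\]
Exploiting $C_9\leq\alpha'\leq C_{10}$ in the form $\int_\Omega\overline{w}\,g\geq C_9\|\overline{w}\|_H^2$ and $\|g\|_H\leq C_{10}\|\overline{w}\|_H$, together with $\|\phi(\chi_1)-\phi(\chi_2)\|_H\leq C\,\|\overline{\chi}\|_{L^6}\leq C\,\|\overline{\chi}\|_V$ (write $\phi(\chi_1)-\phi(\chi_2)=(\chi_1^2+\chi_1\chi_2+\chi_2^2-1)\overline{\chi}$ and use Hölder together with the $L^\infty(0,T;L^6)$-bound $N_*$), and Young's inequality, one obtains
\[
\frac12\frac{d}{dt}\|\overline{\chi}\|_H^2 + \delta\|\nabla g\|_H^2 + \frac{C_9}{2}\|\overline{w}\|_H^2 \;\leq\; C\,\|\overline{\chi}\|_V^2 .
\]

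\emph{Second differential inequality and conclusion.} Test the second difference equation with $\overline{\chi}_t\in L^2(0,T;H)$; since $\langle A\overline{\chi},\overline{\chi}_t\rangle=\frac12\frac{d}{dt}\|\nabla\overline{\chi}\|_H^2$ by the standard chain rule (legitimate in view of~\eqref{reg-chi-bis}, which gives $\overline{\chi}(t)\in Z$ for a.e.\ $t$ and $\overline{\chi}\in C([0,T];V)$), Young's inequality yields
\[
\frac12\frac{d}{dt}\|\nabla\overline{\chi}\|_H^2 + \frac{\delta}{2}\|\overline{\chi}_t\|_H^2 \;\leq\; \frac1\delta\|\overline{w}\|_H^2 + C(\delta)\,\|\overline{\chi}\|_V^2 .
\]
Adding the first inequality to $\mu:=\delta C_9/4$ times the second absorbs $\frac\mu\delta\|\overline{w}\|_H^2$ into $\frac{C_9}{2}\|\overline{w}\|_H^2$; with $\psi(t):=\|\overline{\chi}(t)\|_H^2+\mu\|\nabla\overline{\chi}(t)\|_H^2$, equivalent to $\|\overline{\chi}(t)\|_V^2$, one gets $\psi'\leq C(\delta)\,\psi$, hence $\psi(t)\leq e^{C(\delta)T}\psi(0)$ and $\psi(0)\leq\max\{1,\mu\}\,\|\chi_0^1-\chi_0^2\|_V^2$ — this is the $L^\infty(0,T;V)$-part of~\eqref{contdepV}. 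Integrating the combined inequality over $(0,t)$ bounds $\|\overline{\chi}_t\|_{L^2(0,t;H)}$ and $\|\overline{w}\|_{L^2(0,t;H)}$ by $S_\delta\|\chi_0^1-\chi_0^2\|_V$; finally, rewriting $A\overline{\chi}=\overline{w}-\delta\overline{\chi}_t-(\phi(\chi_1)-\phi(\chi_2))\in L^2(0,t;H)$ and invoking elliptic regularity for the Neumann Laplacian produces the $L^2(0,t;Z)$-bound. Uniqueness is the special case $\chi_0^1=\chi_0^2$.

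\emph{Main obstacle.} Because~\eqref{e:hyp1-bis} gives no bound on $\alpha''$, the term $\langle A\overline{\chi},\alpha(w_1)-\alpha(w_2)\rangle$ (and likewise $\|\nabla(\alpha(w_1)-\alpha(w_2))\|_H^2$) cannot be treated by expanding $\nabla\alpha(w_i)=\alpha'(w_i)\nabla w_i$ and comparing $\alpha'(w_1)$ with $\alpha'(w_2)$. The resolution — the heart of the proof — is to transfer $A$ onto $\alpha(w_1)-\alpha(w_2)$ by self-adjointness and recognise $A(\alpha(w_1)-\alpha(w_2))=-\overline{\chi}_t$ from the first equation, so that the term becomes $-\frac12\frac{d}{dt}\|\overline{\chi}\|_H^2$. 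A subordinate, routine difficulty is the bookkeeping ensuring that $S_\delta$ depends only on $M_*$, $T$, $|\Omega|$, $C_9$, $C_{10}$, which holds because the nonlinearities enter the estimates only through the a priori bound $N_*=N_*(M_*,|\Omega|)$ and through $C_9,C_{10}$.
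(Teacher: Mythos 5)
Your proof is correct, and it reaches the paper's estimate \eqref{contdepV} by a genuinely different first step. The paper (following the scheme of Rossi's earlier work, cited as \cite[Prop.~2.1]{rossi05}) tests the difference of the first equations by $\mathcal{N}\bigl(\overline{w}-m(\overline{w})\bigr)$ and the difference of the second equations by $\mathcal{N}(\overline{\chi}_t)+\overline{\chi}$, where $\overline{\chi}=\chi_1-\chi_2$, $\overline{w}=w_1-w_2$; monotonicity of $\alpha$ then acts on $\int_\Omega(\alpha(w_1)-\alpha(w_2))(\overline{w}-m(\overline{w}))$, the cross terms $\int_\Omega\overline{w}\,\mathcal{N}(\overline{\chi}_t)$ cancel, and one arrives at the integrated inequality \eqref{basic-cont-dep} and, via Gronwall, at the $H$-level estimate \eqref{eq:stima-cont-dep-h}, which also delivers a bound on $\overline{\chi}_t$ in $L^2(0,t;V')$ and on $\overline{w}$ in $L^2(0,t;H)$. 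You instead test both difference equations by $g:=\alpha(w_1)-\alpha(w_2)$, convert the troublesome term $\langle A\overline{\chi},g\rangle$ into $-\tfrac12\tfrac{d}{dt}\|\overline{\chi}\|_H^2$ through the symmetry of $A$ and the identity $Ag=-\overline{\chi}_t$, and use monotonicity directly in the form $\int_\Omega\overline{w}\,g\geq C_9\|\overline{w}\|_H^2$; this is legitimate since, with $p=0$ in \eqref{e:hyp1-bis}, $\alpha(w_i)\in L^2(0,T;Z)$ by \eqref{reg-w}. Both routes share the essential philosophy (never differentiate $\alpha$, exploit only $C_9\leq\alpha'\leq C_{10}$ and duality), and the second half is identical: testing \eqref{duedifb} by $\overline{\chi}_t$ (the paper's \eqref{e:6.5}), Gronwall, and elliptic regularity by comparison for the $L^2(0,t;Z)$ part. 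What your variant buys is that the mean-value correction never appears: in the paper's scheme one must separately estimate $\int_0^t\int_\Omega m(\overline{w})(\alpha(w_1)-\alpha(w_2))$ via $m(\overline{w})=m(\phi(\chi_1)-\phi(\chi_2))$ (easy here, but the delicate point in Theorem~\ref{th:3.2}), whereas your pairing produces $\int_\Omega\overline{w}\,g$ directly. What the paper's choice of test functions buys is the extra $V'$-estimate on $\overline{\chi}_t$ and a structure that transfers verbatim to the more general growth of $\alpha'$ in Theorem~\ref{th:3.2}; your argument as written leans on $\alpha'\leq C_{10}$ (through $\|g\|_H\leq C_{10}\|\overline{w}\|_H$) and on $\alpha(w_i)\in L^2(0,T;Z)$, so it is tailored to the hypotheses of Theorem~\ref{th:3}, which is all that is required here. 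Your bookkeeping of the constants (dependence only on $M_*$, $T$, $|\Omega|$, $C_9$, $C_{10}$ through the a priori $L^\infty(0,T;V)$ bound coming from the energy identity) is also consistent with \eqref{e:only-depe}.
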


Our second continuous dependence results holds in the more general
frame of assumptions of Proposition~\ref{prop:regularized}, but for
more regular initial data. Indeed, we have
\begin{theorem}
\label{th:3.2} Assume that \eqref{e:hyp1} holds for some $p\in
[0,1]$, and that $\phi$ complies with \eqref{hyp:5},
\eqref{e:hyp2-bis-visco}, and~\eqref{e:addphi}.  Let $\chi_{0}^{1}$
and $\chi_{0}^{2} $ be two
    initial  data
    for Problem~\ref{p:2} such that $\chi_{0}^{i} \in Z$ and $\widehat{\phi}(\chi_{0}^{i}) \in L^1
    (\Omega)$ for $i=1,2$, and  let $ \chi_{i} $, $ i=1,2 $, be the corresponding
solutions. Then, for every $\delta>0$, there exists a positive
constant $ S_{\delta} $, also depending on $T$, $|\Omega|$, $C_1$,
$C_2$ and $M^{*}:= \max_{i=1,2} \{\| \chi_{0}^{i}\|_Z \}$,  such
that
 estimate \eqref{contdepV}
holds for all $t \in [0,T]$.
\end{theorem}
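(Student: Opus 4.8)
The plan is to derive a Gronwall-type estimate for the difference of two solutions, exploiting the \emph{higher regularity} $\chi_i \in L^\infty(0,T;Z) \cap H^1(0,T;V)$ guaranteed by Proposition~\ref{prop:regularized} (which applies precisely because $\chi_0^i \in Z$ and the hypotheses \eqref{e:hyp1}, \eqref{hyp:5}, \eqref{e:hyp2-bis-visco}, \eqref{e:addphi} are in force). Write $\chi := \chi_1 - \chi_2$, $w := w_1 - w_2$. From \eqref{1-var-better} for the two solutions we get $\chi_t + A(\alpha(w_1) - \alpha(w_2)) = 0$; testing this identity by $\mathcal{N}(\chi_t)$ (note that $\chi$, and hence $\chi_t$, has zero mean value, so this is legitimate) yields $\|\chi_t\|_{V'}^2 + \int_\Omega (\alpha(w_1) - \alpha(w_2)) \chi_t = 0$. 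On the other hand, subtracting the two copies of \eqref{2-var-better} gives $w = \delta \chi_t + A\chi + (\phi(\chi_1) - \phi(\chi_2))$, which I substitute for $\chi_t = \delta^{-1}(w - A\chi - (\phi(\chi_1)-\phi(\chi_2)))$ in the term $\int_\Omega(\alpha(w_1)-\alpha(w_2))\chi_t$.

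The resulting identity produces, after using the monotonicity of $\alpha$ and \eqref{e:hyp1-bis}'s lower bound (here in the form $C_1 \le \alpha'$ since $p\in[0,1]$; actually one only needs $\alpha' \ge C_1 > 0$ near the relevant range, and this is where the constants $C_1, C_2$ enter), a good term $\tfrac{C_1}{\delta}\int_\Omega(\alpha(w_1)-\alpha(w_2))(w_1-w_2) \gtrsim \tfrac{C_1}{\delta}\|w\|_H^2$ and a good term controlling $\|A\chi\|_H^2$ (via $\int_\Omega (\alpha(w_1)-\alpha(w_2)) A\chi$ combined with a further testing of the elliptic relation). In parallel I test \eqref{2-var-better}-difference by $A\chi$ to bring in $\tfrac{1}{2}\frac{d}{dt}\|\nabla\chi\|_H^2$ together with $\|A\chi\|_H^2$, and I must handle the nonlinear term $\int_\Omega(\phi(\chi_1)-\phi(\chi_2))A\chi$. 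The key structural point is that $\phi$ is a Lipschitz perturbation of a monotone function (cf.~\eqref{e:hyp2-bis}/\eqref{e:hyp2-bis-visco} and \eqref{e:hyp2-aftermath}), so $\int_\Omega (\phi(\chi_1)-\phi(\chi_2))A\chi \ge -C_{\phi,2}\|\nabla\chi\|_H^2$ up to harmless terms; and the growth bound \eqref{e:addphi}, $|\phi'(r)| \le C_{\phi,3}(1+|r|^4)$, together with $\chi_i \in L^\infty(0,T;Z) \hookrightarrow L^\infty(0,T;L^\infty(\Omega))$ (using $Z \subset H^2(\Omega) \hookrightarrow C(\overline\Omega)$ in dimension $3$), gives $|\phi(\chi_1)-\phi(\chi_2)| \le C(M^*)|\chi|$ pointwise, so all remaining $\phi$-contributions are absorbed into $\varepsilon\|A\chi\|_H^2 + C(M^*,\varepsilon)\|\chi\|_H^2$. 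Collecting terms, one arrives at
\[
\frac{d}{dt}\Big(\|\nabla\chi\|_H^2\Big) + c_\delta\Big(\|\chi_t\|_{V'}^2 + \|w\|_H^2 + \|A\chi\|_H^2\Big) \le C_\delta(M^*)\,\|\chi\|_H^2,
\]
and since $\|\chi\|_H^2 \le \varepsilon\|A\chi\|_H^2 + C_\varepsilon\|\chi\|_{V'}^2 \le \varepsilon\|A\chi\|_H^2 + C_\varepsilon\|\nabla\chi\|_H^2$ (interpolation / Poincaré for zero-mean functions, and the mean of $\chi$ is constant in time so is controlled by $\|\chi(0)\|_{V'}$ — in fact one should add $\tfrac{d}{dt}m(\chi)^2 = 0$ to close in the full $V$-norm), Gronwall's lemma yields $\|\chi(t)\|_V^2 + \|\chi\|_{H^1(0,t;H)\cap L^2(0,t;Z)}^2 \le S_\delta^2\|\chi_0^1-\chi_0^2\|_V^2$, with $S_\delta$ depending only on $\delta, T, |\Omega|, C_1, C_2, M^*$ as claimed. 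Note $\|\chi_t\|_{L^2(0,t;H)}$ is recovered from $\delta\chi_t = w - A\chi - (\phi(\chi_1)-\phi(\chi_2))$ once $w, A\chi \in L^2(0,t;H)$ and $|\phi(\chi_1)-\phi(\chi_2)|\le C(M^*)|\chi|$ are under control.

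The main obstacle is the estimate of the nonlinear term $\int_\Omega(\alpha(w_1)-\alpha(w_2))\,\chi_t$ after the substitution of $\chi_t$: the part $-\tfrac1\delta\int_\Omega(\alpha(w_1)-\alpha(w_2))(\phi(\chi_1)-\phi(\chi_2))$ is not sign-definite and must be bounded by $\varepsilon\|w\|_H^2 + C(M^*)\|\chi\|_H^2$ using the Lipschitz bound on $\alpha$ from \eqref{e:hyp1-bis} (i.e.\ $|\alpha(w_1)-\alpha(w_2)| \le C_2\,\ldots$ — here the hypothesis $p\in[0,1]$ keeps things polynomial, but since we only have $C_1 \le \alpha' \le C_2(1+|r|^{2p})$ one needs an $L^\infty$-in-time bound on $w_i$, namely $w_i \in L^{2p+2}(0,T;L^\infty(\Omega))$ from \eqref{reg-w}, to make $\alpha$ genuinely globally Lipschitz along the trajectories — this is exactly where the enhanced regularity of Proposition~\ref{prop:regularized} is essential and where the restriction $p\in[0,1]$, hence $C_1,C_2$ appearing in the constant, is used). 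Once that term is tamed, the rest is a routine Gronwall argument; I would carry out the testing procedures formally here and refer to the Appendix for the rigorous justification on the Galerkin/time-discrete scheme.
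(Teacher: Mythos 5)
There is a genuine gap, and it sits exactly at the term you declare to be ``good''. After testing the difference of \eqref{1-var-better} by $\mathcal{N}(\chi_t)$ (legitimate because $\chi_t=\partial_t(\chi_1-\chi_2)$ has zero mean; note that $\chi_1-\chi_2$ itself need not) and substituting $\chi_t=\delta^{-1}\bigl(w-A\chi-(\phi(\chi_1)-\phi(\chi_2))\bigr)$, you are left with the cross term $\delta^{-1}\int_\Omega(\alpha(w_1)-\alpha(w_2))\,A\chi$, which has no sign and does not ``control $\|A\chi\|_H^2$''; the ``further testing of the elliptic relation'' you invoke is precisely the substitution you have already performed, so nothing new comes from it. To bound this term you must use $|\alpha(w_1)-\alpha(w_2)|\le C_2\bigl(1+|w_1|^{2p}+|w_2|^{2p}\bigr)|w_1-w_2|$, and here your claim that \eqref{reg-w} makes $\alpha$ ``genuinely globally Lipschitz along the trajectories'' fails for $p>0$: $w_i\in L^{2p+2}(0,T;L^\infty(\Omega))$ is an integrable-in-time bound, not an $L^\infty$-in-time bound, so the Lipschitz modulus is $h(t)=1+\|w_1(t)\|_{L^\infty(\Omega)}^{2p}+\|w_2(t)\|_{L^\infty(\Omega)}^{2p}$ with only $h^2\in L^1(0,T)$ (this is where $p\le1$ enters). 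Hence the cross term can only be estimated by $\varepsilon\|w\|_H^2+C_\varepsilon h^2(t)\|A\chi\|_H^2$ (or $\varepsilon\|A\chi\|_H^2+C_\varepsilon h^2(t)\|w\|_H^2$), and in either case the unbounded weight $h^2(t)$ multiplies a dissipation quantity which appears on the left only with a constant coefficient and is not dominated by your Gronwall state $\|\nabla\chi\|_H^2$: it can be neither absorbed pointwise in time nor handled by Gronwall's lemma. Your displayed differential inequality with a time-independent constant $C_\delta(M^*)$ is therefore not justified (it would be correct only in the globally Lipschitz case $p=0$). By contrast, the same weight on the term $\delta^{-1}\int_\Omega(\alpha(w_1)-\alpha(w_2))(\phi(\chi_1)-\phi(\chi_2))$ is harmless, since there it lands on $\|\chi_1-\chi_2\|_H^2$, which is controlled by the state; it is the $A\chi$ cross term that breaks the scheme.

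The paper's proof avoids this trap by keeping the duality testing of Theorem~\ref{th:3}: the difference of the first equations \eqref{unodifb} is tested by $\mathcal{N}\bigl(\underline{w}-m(\underline{w})\bigr)$ and the difference of the second equations \eqref{duedifb} by $\mathcal{N}(\underline{\chi}_t)+\underline{\chi}$, where $\underline{\chi}=\chi_1-\chi_2$, $\underline{w}=w_1-w_2$. Then the only $\alpha$-term without a favorable sign is $I=\int_0^t\int_\Omega m(\underline{w})\bigl(\alpha(w_1)-\alpha(w_2)\bigr)$, and since $m(\underline{w})=m\bigl(\phi(\chi_1)-\phi(\chi_2)\bigr)$ is controlled by $\|\underline{\chi}\|_H$ (here the $L^\infty(0,T;Z)$ bound of Proposition~\ref{prop:regularized} enters, exactly as you anticipated for the $\phi$-difference), the weight $1+\|w_1\|_{L^\infty(\Omega)}^{4p}+\|w_2\|_{L^\infty(\Omega)}^{4p}\in L^1(0,T)$ ends up multiplying $\|\underline{\chi}\|_H^2$, which does appear pointwise in time on the left-hand side through the term $\delta\|\underline{\chi}(t)\|_H^2$ in \eqref{basic-cont-dep}; Gronwall then gives \eqref{eq:stima-cont-dep-h}, and the $V$-, $H^1(0,t;H)$- and $L^2(0,t;Z)$-parts of \eqref{contdepV} follow from the subsequent test by $\underline{\chi}_t$ as in \eqref{e:6.5}. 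Unless you find a genuinely new way to treat the $A\chi$ cross term, your substitution strategy does not close, and you should revert to the paper's choice of test functions.
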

\subsection{Global attractor and exponential attractors for the viscous problem}
\label{ss:2.4}
The \emph{energy functional}  associated with Problem~\ref{p:2}
reads
\begin{equation}
\label{e:ene-funct} \ene: \cx \to \R, \ \  \ \ \ene(v):= \frac12
\int_{\Omega} |\nabla v|^2 + \int_{\Omega} \widehat{\phi}(v) \ \
\text{for all $v \in \cx$.}
\end{equation}
Consequently, we introduce the phase space $(\cx,\dcx)$ of energy
bounded solutions, defined by
\begin{equation}
\label{e:pspace}
\begin{aligned}
& \cx= \left\{v \in V\, : \ \widehat{\phi}(v) \in L^1 (\Omega)
\right\},
\\
&  \dcx(v_1,v_2) = \| v_1 -v_2 \|_{H^1 (\Omega)} +
 \left\|\widehat{\phi}(v_1) -
{\widehat{\phi}(v_2)} \right\|_{L^1(\Omega)} \qquad \text{for all
$v_1, \, v_2 \in \cx$.}
\end{aligned}
\end{equation}

The following definition  details the properties of  the solutions to
Problem~\ref{p:2} to which our long-time analysis will apply.
\begin{definition}
\label{def:solp2} We say that a function $\chi : [0,+\infty) \to
\cx$ is a \emph{solution to Problem~\ref{p:2} on $(0,+\infty)$} if,
for all $T>0$, $\chi$ enjoys  regularity~\eqref{reg-chi-bis} on the
interval $(0,T)$  and there exists  a function $w$, with
regularity~\eqref{reg-w} for all $T>0$, such that
equations~\eqref{1-var-better}--\eqref{2-var-better} hold almost
everywhere on $\Omega \times (0,+\infty)$. We set
\begin{equation}
\label{e:gen-semiflow} \mathcal{S}= \left \{ \chi : [0,+\infty ) \to
X \, : \ \text{$\chi$ is a solution to Problem~\ref{p:2} on
$(0,+\infty)$} \right\}\,. \end{equation}
\end{definition}
\noindent

 We  assume that, besides~\eqref{e:hyp1}, $\alpha$
 complies with the following condition, slightly stronger
 than~\eqref{e:hyp1}:
 \begin{equation}
\label{e:hyp-add-alpha} \tag{H9}
 \exists\, \mathsf{c}_{\alpha}>0, \ \
\exists\, \Psi: \R \to [0,+\infty) \ \text{convex}\,:  \ \ \forall
\, r \in \R\, \quad \alpha(r) r - \mathsf{c}_{\alpha}|r|^{2p+2} =
\Psi(r)\,.
 \end{equation}
Hence,  our first result asserts that the solution set $\mathcal{S}$
is a \emph{generalized semiflow} in the sense of
Definition~\ref{def:generalized-semiflow}.
\begin{proposition}
\label{prop:2.1} Assume \eqref{e:hyp1},
\eqref{hyp:5}--\eqref{e:hyp2-bis-visco}.
 Then,
\begin{enumerate}
\item every $\chi  \in \mathcal{S}$ (cf.~\eqref{e:gen-semiflow})  complies with the \emph{energy identity}
\begin{equation}
\label{e:enid} \delta\int_s^t \int_{\Omega} |\chi_t|^2 + \int_s^t
\int_{\Omega} \alpha'(w)|\nabla w|^2 + \ene(\chi(t)) = \ene(\chi(s))
\quad \text{for all $0 \leq s \leq t$,}
\end{equation}
the function $w:(0,+\infty) \to V$ being defined
by~\eqref{2-var-better} on $\Omega \times (0,+\infty)$.
\item Assume that $\alpha$ in addition complies with~\eqref{e:hyp-add-alpha}.
Then, the set $\mathcal{S}$ is a generalized semiflow in the
 phase space~\eqref{e:pspace}, and its elements are continuous
 functions from $[0,+\infty)$ onto $X$.
 \end{enumerate}
\end{proposition}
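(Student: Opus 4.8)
The plan is to prove Proposition~\ref{prop:2.1} in two stages: first establish the energy identity \eqref{e:enid} for any solution $\chi \in \mathcal{S}$, and then verify the four axioms \textbf{(P1)}--\textbf{(P4)} of Definition~\ref{def:generalized-semiflow}, together with the continuity of solutions as maps $[0,+\infty)\to X$. For the energy identity, the natural argument is to test \eqref{1-var-better} by $w$ and \eqref{2-var-better} by $\chi_t$, then subtract. Testing \eqref{1-var-better} by $w$ produces $\int_\Omega \chi_t w + \int_\Omega \alpha'(w)|\nabla w|^2$ (after integrating $A(\alpha(w))$ against $w$ and using the chain rule $\nabla \alpha(w) = \alpha'(w)\nabla w$), while pairing \eqref{2-var-better} with $\chi_t$ gives $\delta\int_\Omega |\chi_t|^2 + \frac{d}{dt}\bigl(\tfrac12\int_\Omega |\nabla\chi|^2\bigr) + \int_\Omega \phi(\chi)\chi_t = \int_\Omega w\chi_t$. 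Recognizing $\int_\Omega \phi(\chi)\chi_t = \frac{d}{dt}\int_\Omega \widehat\phi(\chi)$ and combining yields \eqref{e:enid} after integration over $(s,t)$. The delicate point here is \emph{justifying} these manipulations at the level of regularity \eqref{reg-chi-bis}--\eqref{reg-w}: one must check that $t\mapsto \ene(\chi(t))$ is absolutely continuous, that $w$ is an admissible test function in \eqref{e:difficult}-type duality (using $w\in L^2(0,T;V)$ and $\chi_t\in L^2(0,T;H)$), and that $\widehat\phi(\chi)$ has enough regularity in time — this is where \eqref{hyp:5} (exponential growth control $|\phi(r)|\le C_7(\widehat\phi(r)+1)$) and \eqref{e:phi-add} are used to bound $\frac{d}{dt}\int_\Omega\widehat\phi(\chi)$ in $L^1(0,T)$ via $\phi(\chi)\chi_t$. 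I expect one should first prove the identity on a smooth approximating level (the scheme in the Appendix) and pass to the limit, or argue by a density/mollification argument in time.

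For the semiflow axioms, \textbf{(P1)} is immediate from the existence Theorem~\ref{th:2} (applied on each finite interval and concatenated, or directly from a global-in-time solution), and \textbf{(P2)} and \textbf{(P3)} are formal consequences of the fact that the equations \eqref{1-var-better}--\eqref{2-var-better} are autonomous: time-translates of solutions solve the same system, and the concatenation $z$ in \eqref{def:concaten} inherits regularity \eqref{reg-chi-bis} across $t=\tau$ precisely because $\chi\in H^1(0,T;H)$ forces continuity $\chi\in C^0([0,T];H)$ — and in fact, by the continuity claim, continuity into $X$ — so the matching condition $h(0)=g(\tau)$ glues the two pieces without creating a jump, while the equations hold a.e. on each subinterval and hence a.e. on the union. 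One must verify that $z$ still satisfies the energy \emph{inequality}/identity and the regularity bounds; this follows by adding the estimates on $(0,\tau)$ and $(\tau,+\infty)$.

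The real work is \textbf{(P4)}, upper-semicontinuity with respect to initial data, together with the continuity of $t\mapsto\chi(t)$ into $X$. Given $\{\chi_n\}\subset\mathcal{S}$ with $\chi_n(0)\to\chi_0$ in $X$, the plan is: (i) use the energy identity \eqref{e:enid} with $s=0$ to get uniform bounds — since $\ene(\chi_n(0))$ is bounded (as $\chi_n(0)\to\chi_0$ in $X$ controls $\|\nabla\chi_n(0)\|_{L^2}$ and $\|\widehat\phi(\chi_n(0))\|_{L^1}$), one controls $\chi_n$ in $L^\infty(0,T;V)\cap H^1(0,T;H)$, $\alpha(w_n)$ via $\int\alpha'(w_n)|\nabla w_n|^2$ and \eqref{e:hyp1}, hence (combined with the bound on $\chi_t$ and \eqref{2-var-better}) $w_n$ in $L^2(0,T;V)$, and then $\chi_n$ in $L^2(0,T;Z)$ from elliptic regularity applied to $A\chi_n = w_n - \delta\chi_{n,t} - \phi(\chi_n)$; (ii) extract weakly/weakly-$*$ convergent subsequences and, via Aubin--Lions, strong convergence $\chi_n\to\chi$ in $C^0([0,T];H)$ and a.e. on $\Omega\times(0,T)$; (iii) pass to the limit in the equations — the nonlinear terms $\alpha(w_n)$ and $\phi(\chi_n)$ are handled by a.e.\ convergence plus the uniform integrability furnished by Theorem~\ref{t:ds} (this is exactly why that compactness criterion was quoted), using the growth bounds \eqref{e:conse1} and \eqref{hyp:5}; (iv) identify the limit initial datum $\chi(0)=\chi_0$ using $\chi_n\to\chi$ in $C^0([0,T];H)$ and $\chi_n(0)\to\chi_0$; (v) for convergence in $X$ (not just $H$) at each $t$, and for the continuity $t\mapsto\chi(t)$ in $\dcx$, one needs $\widehat\phi(\chi_n(t))\to\widehat\phi(\chi(t))$ in $L^1(\Omega)$ and $\widehat\phi(\chi(\cdot))\in C^0$; here the strategy is to combine weak lower semicontinuity (Theorem~\ref{th-ioffe}, using the $\lambda$-convexity \eqref{e:lambda-convex}) with the reverse inequality obtained from the energy identity — the standard "energy convergence implies strong convergence" trick: $\ene(\chi_n(t))\to\ene(\chi(t))$ (proven by showing $\limsup\ene(\chi_n(t))\le\ene(\chi(t))$ via the energy identity and $\liminf$ of the dissipation terms, against $\liminf\ge$ by lower semicontinuity) upgrades the weak convergence in $V$ and the a.e. convergence of $\widehat\phi(\chi_n(t))$ to strong convergence in $H^1(\Omega)$ and $L^1(\Omega)$ respectively. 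Role of assumption \eqref{e:hyp-add-alpha}: it guarantees that the dissipation functional $\int_0^t\int_\Omega\alpha'(w)|\nabla w|^2$, or rather the relevant coercive/convex lower-semicontinuous structure in $w$, behaves well under weak limits — concretely, writing $\alpha(r)r = \mathsf{c}_\alpha|r|^{2p+2}+\Psi(r)$ with $\Psi$ convex lets one apply Ioffe-type lower semicontinuity to pass to the limit inferior in the dissipation integral while retaining the energy \emph{identity} (not merely inequality) in the limit, which is what is needed for the "$\limsup\le$" half. The main obstacle, as usual in this circle of ideas, is precisely step (v): reconstructing \emph{equality} in the limiting energy balance at arbitrary times $s<t$ so as to deduce strong convergence in the phase space $X$ and continuity into $X$ — this requires carefully matching the limsup coming from $\ene(\chi_n(s))\to\ene(\chi(s))$ (itself needing an inductive/time-continuity argument starting from $s=0$) with the liminf of the dissipation, and handling the nonlinear potential term's $L^1$-convergence via a Vitali/uniform-integrability argument rather than mere a.e. convergence.
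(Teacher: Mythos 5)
Your proposal follows essentially the paper's route: the energy identity by testing \eqref{1-var-better} by $w$ and \eqref{2-var-better} by $\chi_t$, and \textbf{(P4)} plus continuity into $X$ by uniform energy bounds, compactness, and the ``convergence of energies implies strong convergence in the phase space'' argument, with \eqref{e:hyp-add-alpha} and Ioffe's theorem handling the dissipation. The one place where your first-choice route would not deliver the claim is the rigorous justification of \eqref{e:enid}: the paper proves it \emph{directly} on the system, via the chain rule \eqref{e:l5.1} for the $\lambda$-convex energy (from \eqref{e:lambda-convex} and \cite{brezis73}) and the identity \eqref{e:5.2.2}, whose proof uses precisely $w\in L^{2p+2}(0,T;L^\infty(\Omega))$ from \eqref{reg-w} (not just $w\in L^2(0,T;V)$), since $\alpha'$ grows like $|r|^{2p}$. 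Proving the identity on the approximation scheme and passing to the limit would in general yield only an inequality and, more importantly, would cover only approximable solutions, whereas part~1 asserts the identity for \emph{every} $\chi\in\mathcal{S}$ — which is what preserves the concatenation property (cf.\ Remark~\ref{solo-formale}); so your alternative density/chain-rule route is the one to pursue. Two minor further differences: for the $L^1$-convergence of $\widehat{\phi}$ the paper invokes \cite[Lemma~4.2]{rocca-schimperna04}, your Vitali argument being an acceptable substitute; and in \textbf{(P4)} you should make explicit that \eqref{e:hyp-add-alpha}, combined with the convergence of $\int_0^T\!\int_\Omega \alpha(w_{n_k})w_{n_k}$ from \eqref{e:for-later-convenience}, first yields $w_{n_k}\to w$ strongly in $L^{2p+2}$ (hence in measure), and only then can Ioffe's theorem be applied to $\int\!\int \alpha'(w_{n_k})|\nabla w_{n_k}|^2$.
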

\noindent
 We prove our main result on the long-time behavior of the
solutions to Problem~\ref{p:2}  under a further condition on $\phi$,
which in particular implies (and thus replaces)
\eqref{e:hyp2-bis-visco}, namely
\begin{equation}
\label{lim-infty-phi} \tag{H10}
\begin{aligned}
\lim_{r\to +\infty}\phi(r)=+\infty, \qquad \lim_{r\to
-\infty}\phi(r)=-\infty\,,
\\
\lim_{r\to +\infty}\phi'(r)=\lim_{r\to -\infty}\phi'(r)=+\infty\,.
\end{aligned}
\end{equation}
\begin{maintheorem}
\label{th:4} Assume \eqref{e:hyp1}, \eqref{hyp:5},
\eqref{e:hyp-add-alpha}, and \eqref{lim-infty-phi}.
  For a given $\mathrm{m}_0
>0$, denote by $\mathcal{D}_{\mathrm{m}_0}$ the set
\begin{equation}
\label{e:fixed-mean-value} \mathcal{D}_{\mathrm{m}_0}= \left \{ \chi
\in X\, : \ |m(\chi)| \leq {\mathrm{m}_0} \right\}\,.
\end{equation}
Then, the semiflow $\mathcal{S}$ possesses a unique global attractor
$\mathcal{A}$ in $\mathcal{D}_{\mathrm{m}_0}$, given by
\begin{equation}
\label{e:att} \mathcal{A}: =\bigcup\left \{\omega(D)\, : \ D \subset
\mathcal{D}_{\mathrm{m}_0} \ \text{bounded}\right\}\,.
\end{equation}
Finally, we have the following enhanced regularity for the elements
of the $\omega$-limit of every trajectory:
 \begin{equation}
\label{e:enhanced-regularity} \forall\, p \in [1,+\infty) \ \
\exists\,C_p>0\,: \ \ \forall\,\chi \in \mathcal{S}, \ \ \forall\,
\bar{\chi} \in \omega(\chi) \quad \| \bar{\chi}\|_{W^{2,p}(\Omega)}
+ \| \widehat{\phi}(\bar{\chi})\|_{L^p(\Omega)} \leq C_p\,.
 \end{equation}
\end{maintheorem}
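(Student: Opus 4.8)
The plan is to apply the abstract criterion of Theorem~\ref{thm:ball1}, in the sharpened form of Remark~\ref{rem:restriction_to_invariant_set}, to the semiflow $\mathcal{S}$, taking $\mathcal{D}=\mathcal{D}_{\mathrm{m}_0}$ and using the energy functional $\mathcal{E}$ itself as Lyapunov function. Note first that all the hypotheses of Proposition~\ref{prop:2.1} are in force (since \eqref{lim-infty-phi} implies \eqref{e:hyp2-bis-visco}), so $\mathcal{S}$ is a generalized semiflow whose elements are continuous curves into $X$ and satisfy the energy identity \eqref{e:enid}. As a preliminary step I would test \eqref{1-var-better} by the constant $1/|\Omega|$: since $A(\alpha(w))$ has zero mean, $t\mapsto m(\chi(t))$ is constant along every solution, whence $T(t)\mathcal{D}_{\mathrm{m}_0}\subset\mathcal{D}_{\mathrm{m}_0}$ for all $t\ge 0$ and the restricted family $\{\chi\in\mathcal{S}:\chi(0)\in\mathcal{D}_{\mathrm{m}_0}\}$ is itself a generalized semiflow on the (relatively closed) metric subspace $\mathcal{D}_{\mathrm{m}_0}$. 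This gives exactly the first line of \eqref{e:saab}.

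Eventual boundedness is then immediate from \eqref{e:enid}: $\mathcal{E}(\chi(t))\le\mathcal{E}(\chi(0))$ for all $t\ge 0$, and $\mathcal{E}(v)=\tfrac12\|\nabla v\|_H^2+\|\widehat\phi(v)\|_{L^1(\Omega)}$, together with the conserved mean value, controls the full $\dcx$-distance, so $\gamma^0(B)$ is $\dcx$-bounded for every $\dcx$-bounded $B\subset\mathcal{D}_{\mathrm{m}_0}$. The compactness of $\mathcal{S}$ is the core of the proof. Given $\{\chi_n\}\subset\mathcal{S}$ with $\{\chi_n(0)\}$ bounded, hence $\sup_n\mathcal{E}(\chi_n(0))<\infty$, the a priori estimates underlying Theorem~\ref{th:2} bound $\{\chi_n\}$ in $L^2(0,T;Z)\cap L^\infty(0,T;V)\cap H^1(0,T;H)$ for each $T$; by Aubin--Lions--Simon and a diagonal argument a subsequence converges in $C([\eps,T];H)$ for all $0<\eps<T$, hence in $L^2(\Omega\times(0,T))$ and a.e. in $\Omega\times(0,+\infty)$, to some $\chi$ which — passing to the limit in \eqref{1-var-better}--\eqref{2-var-better} exactly as in the proof of Theorem~\ref{th:2}, via the monotonicity of $\alpha$ and of $\beta$ in \eqref{e:hyp2-aftermath} — belongs to $\mathcal{S}$. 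For fixed $t>0$ we then have $\chi_{n_k}(t)\to\chi(t)$ in $H$ and $\chi_{n_k}(t)\weakto\chi(t)$ in $V$, and the point is to upgrade this to convergence in $X$, i.e. to $\|\nabla\chi_{n_k}(t)\|_H\to\|\nabla\chi(t)\|_H$ and $\widehat\phi(\chi_{n_k}(t))\to\widehat\phi(\chi(t))$ in $L^1(\Omega)$. Granting the convergence of the energies $\mathcal{E}(\chi_{n_k}(t))\to\mathcal{E}(\chi(t))$, an elementary subsequence argument combining weak lower semicontinuity of the $V$-norm with Fatou's lemma (using $\widehat\phi\ge0$ and a.e. convergence) forces the separate convergence of $\|\nabla\chi_{n_k}(t)\|_H^2$ and of $\int_\Omega\widehat\phi(\chi_{n_k}(t))$; hence strong $V$-convergence, and $L^1$-convergence of $\widehat\phi(\chi_{n_k}(t))\ge 0$ by Scheff\'e's lemma (equivalently Theorem~\ref{t:ds} with $p=1$). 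The convergence of the energies is obtained by writing \eqref{e:enid} for $\chi_{n_k}$ and for $\chi$, using the lower semicontinuity of the two dissipation terms — for $\iint\alpha'(w_{n_k})|\nabla w_{n_k}|^2$ this is precisely Ioffe's Theorem~\ref{th-ioffe}, as $(u,v)\mapsto\alpha'(u)|v|^2$ is continuous and convex in $v$ — and a Helly-type selection on the nonincreasing functions $t\mapsto\mathcal{E}(\chi_{n_k}(t))$. Reconciling this limit energy with $\mathcal{E}(\chi(\cdot))$, despite the possible lack of convergence of the initial energies $\mathcal{E}(\chi_n(0))$, is the main obstacle.

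For the Lyapunov function I would take $\mathcal{E}$: it is $\dcx$-continuous by definition of $\dcx$, nonincreasing along solutions by \eqref{e:enid}, and if $t\mapsto\mathcal{E}(\chi(t))$ is constant on a complete orbit then \eqref{e:enid} gives $\iint\alpha'(w)|\nabla w|^2=0$ and $\delta\iint|\chi_t|^2=0$, so $\chi_t\equiv0$ and $\chi$ is stationary. A rest point $\bar\chi$ has $\chi_t=0$, so \eqref{1-var-better} gives $A(\alpha(w))=0$, whence $\alpha(w)$ — and, $\alpha$ being strictly increasing, $w$ — is a constant $\bar w$, and \eqref{2-var-better} becomes the Neumann problem $-\Delta\bar\chi+\phi(\bar\chi)=\bar w$. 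Integrating yields $\bar w=\tfrac1{|\Omega|}\int_\Omega\phi(\bar\chi)$, bounded by $C_7\big(\mathcal{E}(\bar\chi)/|\Omega|+1\big)$ via \eqref{hyp:5} and $\widehat\phi\ge0$; testing the equation by $\bar\chi-m(\bar\chi)$ and exploiting the coercivity of $\phi$ granted by \eqref{lim-infty-phi} (a bound of the type \eqref{e:2.2.14} for $r\mapsto\phi(r)(r-m)$, uniformly for $|m|\le\mathrm{m}_0$) bounds $\|\bar\chi\|_V$, $\|\phi(\bar\chi)\|_{L^1(\Omega)}$ and, through \eqref{e:phi-add}, $\|\widehat\phi(\bar\chi)\|_{L^1(\Omega)}$ in terms of $\mathrm{m}_0$ only; thus $Z(\mathcal{S})\cap\mathcal{D}_{\mathrm{m}_0}$ is $\dcx$-bounded, completing \eqref{e:saab}. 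Remark~\ref{rem:restriction_to_invariant_set} then delivers the unique global attractor $\mathcal{A}\subset\mathcal{D}_{\mathrm{m}_0}$ with the characterization \eqref{e:att} (which is \eqref{eqn:attrattore} read inside $\mathcal{D}_{\mathrm{m}_0}$) and with $\omega(\chi)\subset Z(\mathcal{S})\cap\mathcal{D}_{\mathrm{m}_0}$ for every $\chi\in\mathcal{S}$ by \eqref{e:additional}.

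Finally, \eqref{e:enhanced-regularity} follows by elliptic bootstrap on rest points: any $\bar\chi\in\omega(\chi)$ solves $-\Delta\bar\chi=\bar w-\phi(\bar\chi)$ with $|\bar w|\le C$ and $\|\widehat\phi(\bar\chi)\|_{L^1}\le C$ uniformly (by the above). Choosing $k$ large enough that $\phi(k)>\bar w$ — possible uniformly thanks to \eqref{lim-infty-phi} and the bound on $\bar w$ — and testing by $(\bar\chi-k)^+$ gives $\int|\nabla(\bar\chi-k)^+|^2+\int(\phi(\bar\chi)-\bar w)(\bar\chi-k)^+=0$ with a nonnegative integrand, so $\bar\chi\le k$; symmetrically $\bar\chi\ge-k'$. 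Hence $\bar\chi\in L^\infty(\Omega)$ with a bound depending only on $\mathrm{m}_0$, so $\phi(\bar\chi),\widehat\phi(\bar\chi)\in L^\infty(\Omega)$, and standard $L^p$ elliptic regularity for the Neumann problem $-\Delta\bar\chi=\bar w-\phi(\bar\chi)\in L^\infty(\Omega)$ yields $\bar\chi\in W^{2,p}(\Omega)$ for every $p<\infty$ with the uniform bound $C_p$, which also controls $\|\widehat\phi(\bar\chi)\|_{L^p(\Omega)}$.
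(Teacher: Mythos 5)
Your overall strategy coincides with the paper's: invariance of $\mathcal{D}_{\mathrm{m}_0}$ via the conserved mean value, eventual boundedness from the energy identity, $\ene$ as Lyapunov function, boundedness of $Z(\mathcal{S})\cap\mathcal{D}_{\mathrm{m}_0}$ by testing the stationary equation by $\bar\chi-m(\bar\chi)$ together with an estimate of type \eqref{e:2.2.14}, the abstract Theorem~\ref{thm:ball1} with Remark~\ref{rem:restriction_to_invariant_set}, and elliptic bootstrap for \eqref{e:enhanced-regularity} (your Stampacchia-type truncation to reach $L^\infty$ is a harmless variant of the paper's $W^{2,6}$/$L^\infty$ route). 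However, there is a genuine gap exactly where you say ``Reconciling this limit energy with $\mathcal{E}(\chi(\cdot))$ \dots is the main obstacle'': you name the obstacle but give no argument to overcome it, and Helly plus lower semicontinuity alone only yield $\ene(\chi(t))\le \mathscr{E}(t)$, which is not enough to conclude $\dcx$-convergence $\chi_{n_k}(t)\to\chi(t)$ and hence compactness of $\mathcal{S}$.

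The missing device is the following two-step argument of the paper. First, one identifies $\mathscr{E}(s)=\ene(\chi(s))$ for a.e.\ $s>0$: from the strong convergence $\chi_{n_k}\to\chi$ in $L^2(0,T;H^{2-\eps}(\Omega))$ one gets, for a.e.\ $s$, $\chi_{n_k}(s)\to\chi(s)$ in $H^1(\Omega)\cap L^\infty(\Omega)$, hence $\widehat\phi(\chi_{n_k}(s))\to\widehat\phi(\chi(s))$ a.e.\ in $\Omega$, and the uniform $L^1(\Omega)$-integrability of $\{\widehat\phi(\chi_{n_k}(s))\}$ (obtained from \eqref{e:phi-add} together with the uniform integrability of $\{\phi(\chi_{n_k}(s))\}$ coming from \eqref{step2}) upgrades this to $L^1(\Omega)$-convergence via Theorem~\ref{t:ds}; Fatou/Scheff\'e applied at time $0$ cannot replace this, because only $H$-convergence of the data is available there. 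Second, for an arbitrary $t>0$ one writes the energy identity for $(\chi_{n_k},w_{n_k})$ on $(s,t)$ with such an $s$, not on $(0,t)$, and passes to the limit: this requires the strong convergence $w_{n_k}\to w$ in $L^{2p+2}(s,t;L^{2p+2}(\Omega))$ (hence in measure), which is where hypothesis \eqref{e:hyp-add-alpha} enters through the limsup argument based on \eqref{e:for-later-convenience}; note that your own application of Ioffe's Theorem to $\iint\alpha'(w_{n_k})|\nabla w_{n_k}|^2$ already presupposes convergence in measure of $w_{n_k}$, which you have not established. Only after this squeeze does one obtain $\mathscr{E}(t)=\ene(\chi(t))$ for all $t>0$, and with it \eqref{e:conv-chink-comp} and the compactness of the semiflow; without it the application of Theorem~\ref{thm:ball1} is not justified.
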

\begin{remark}
\label{rem:poly-advantages} \upshape
 Notice that, in the  case
 \begin{equation}
 \label{e:special-case}
 \text{
 $\widehat{\phi}$ is a polynomial  of even degree $\mathsf{m} \geq 4$, with a positive leading
 coefficient,}
 \end{equation}
then  conditions \eqref{hyp:5} and \eqref{lim-infty-phi}
 are  satisfied.
\end{remark}
\begin{remark}[Enhanced regularity of the global attractor]
\label{zeta} \upshape In addition to hypotheses~\eqref{e:hyp1},
\eqref{hyp:5}, \eqref{e:hyp-add-alpha},   and~\eqref{lim-infty-phi}
of Theorem~\ref{th:4}, assume that $\phi$ complies
with~\eqref{e:addphi}. Then, the enhanced regularity
estimate~\eqref{e:further-reg-chi} holds for $\chi$.

 This
regularity is reflected in the further regularity
\begin{equation}
\label{e:further-reg-att}
 \mathcal{A}\subset Z, \end{equation} for the global
attractor $\mathcal{A}$, which holds  provided that one works with
the (smaller) set of solutions to Problem~\ref{p:2} arising from the
approximation procedure which will be detailed in  Appendix. In
fact, the estimates leading to~\eqref{e:further-reg-chi} can be
rigorously justified only  for this approximate problem, as we will
see in the proof of Proposition~\ref{prop:regularized}, cf.
Section~\ref{s:3.3}. Now, the aforementioned family of
``approximable'' solutions to Problem~\ref{p:2} (see, e.g.,
\cite{BabinVishik92, rossi-segatti-stefanelli08, schimperna07,
segatti06}  for analogous constructions) complies with the
properties defining a generalized semiflow, except for the
concatenation axiom. This has motivated the introduction
in~\cite{rossi-segatti-stefanelli08, segatti06} of the (slightly
more general) notion of  \emph{weak} global attractor, tailored to
the \emph{weak} generalized semiflows without the concatenation
property. Hence, relying on the abstract results
of~\cite{rossi-segatti-stefanelli08, segatti06} and  arguing as in
the proof of Theorem~\ref{th:4}, it is straightforward to prove that
the semiflow associated with the approximable solutions to
Problem~\ref{p:2} admits a \emph{weak} global attractor for
which~\eqref{e:further-reg-att} holds. On the other hand,
 Theorem~\ref{th:5} below shows that,
under the stronger assumptions of Theorem~\ref{th:3.2},
the semiflow possesses the standard global attractor $\mathcal{A}$
satisfying \eqref{e:further-reg-att}, namely, $\mathcal{A}$ is a
compact and invariant set which attracts (in the $V$-metric) all
bounded sets of initial data as time goes to infinity.
\end{remark}

We conclude this section by showing that it is also possible to
construct an exponential attractor through the short-trajectories
approach developed in \cite{malek-prazak}. Let us first set
 for a given $\tau>0$
\begin{equation*}
X_\tau=L^2(0,\tau;V),\quad Y_\tau=\left\{u\in L^2(0,\tau;Z)\,:\, u_t \in L^2(0,\tau;H)\right\}
\end{equation*}
and observe that $Y_\tau$ is compactly embedded in $X_\tau$.

Under assumptions \eqref{e:hyp1}, \eqref{hyp:5},
\eqref{e:hyp2-bis-visco}, and~\eqref{e:addphi}, we know that, for
any $\chi_0\in V$ and any $T>0$, there exists a pair $(\chi,w)$
which solves Problem~\ref{p:2}  with the regularity
 \eqref{reg-chi-bis}, \eqref{reg-w},
\eqref{e:further-reg-chi}, \eqref{e:further-reg-w}
(cf.~Theorem~\ref{th:2} and Proposition~\ref{prop:regularized}). In
particular, $\chi\in Y_T$. In addition,  thanks to~\eqref{e:enid}
and arguing in the same way as in the forthcoming
Section~\ref{s:3.1},  it is not difficult to show that $\Vert
\chi\Vert_{Y_T}$ can be estimated uniformly with respect to
$\Vert\chi_0\Vert_V$, . The energy identity also entails the
existence of a bounded set   $B^0\subset V$  such that, if
$(\chi,w)$ is a solution to Problem~\ref{p:2} with the
aforementioned properties, then there exists $t_0>0$, only depending
on $\Vert\chi_0\Vert_V$, such that $\chi(t)\in B^0$ for all $t\geq
t_0$ and $\chi(t)\in B^0$ for all $t\geq 0$ whenever $\chi_0\in B^0$
(see the proof of the eventual boundedness of $\mathcal{S}$ in
Section~\ref{ss:5.2}). Let us now consider the set
$\mathcal{X}_\ell=\{\chi : (0,\ell)\to V\}$ of all the
$\ell$-trajectories $\chi$ such that $(\chi,w)$ is a solution to
Problem~\ref{p:2} satisfying \eqref{reg-chi-bis}, \eqref{reg-w},
\eqref{e:further-reg-chi}, \eqref{e:further-reg-w}. Then, we endow
this set with the $X_\ell$-topology (note that it might be a
non-complete metric space). Moreover, denoting by $V_w$ the space
$V$ endowed with the weak topology, we have $\mathcal{X}_\ell\subset
C^0([0,\ell];V_w)$. Consequently, any $\ell$-trajectory makes sense
pointwise.

From now on, we assume that assumption \eqref{e:hyp1} holds for some
$p\in[0,1]$. Thanks to \eqref{e:further-reg-chi}, for any
$\ell$-trajectory, there exists $\tau\in (0,\ell)$ such that
$\chi(\tau)\in Z$.  This is sufficient to  conclude that $\chi$ is
unique from $\tau$ on, as   a consequence of
Proposition~\ref{prop:regularized} and Theorem~\ref{th:3.2}.
Therefore,  if $\chi\in \mathcal{X}_\ell$ and $T>\ell$, then there
exists a unique $\tilde \chi\in\mathcal{X}_T$ such that
$\tilde\chi\vert_{[0,\ell]} = \chi$. Thus, we can define a semigroup
$L_t$ on $\mathcal{X}_\ell$ by setting
\begin{equation*}
(L_t\chi)(\tau):= \tilde\chi(t+\tau),\qquad \tau\in [0,\ell],
\end{equation*}
where $\tilde\chi$ is the unique element of $\mathcal{X}_{\ell+\tau}$ such that
$\tilde\chi\vert_{[0,\ell]} = \chi$.

Let us now set
\begin{equation*}
B^0_\ell:=\left\{\chi\in\mathcal{X}_\ell\,:\, \chi(0)\in B^0\right\}.
\end{equation*}
Then, by Proposition~\ref{prop:regularized}, we can infer that the
set $\left\{\chi\vert_{[\ell/2,\ell]}\,:\, \chi\in B^0_\ell\right\}$
is bounded in $L^\infty(\ell/2,\ell;Z)$. Hence, we can prove
 a continuous dependence  estimate  like \eqref{contdepV}, which allows us to apply
\cite[Lemma~2.1]{malek-prazak} and deduce that $L_t$ is Lipschitz
continuous on $B^0_\ell$, uniformly with respect to $t\in [0,\tau]$
for any fixed $\tau>0$. Observe that, arguing as in
Section~\ref{s:3.3}, we can prove that
$B^1_\ell=\overline{L_\tau(B^0_\ell)}^{X_\ell} \subseteq B^0_\ell$
for some $\tau>0$.  From this fact we  deduce  that the dynamical
system $(\mathcal{X}_\ell,L_t)$ has a global attractor
$\mathcal{A}_\ell$ (see \cite[Thm.~2.1]{malek-prazak}). In addition,
$L_\tau: \mathcal{X}_\ell \to Y_\ell$ is Lipschitz continuous for
some $\tau>0$. Indeed, recall that $B^1_\ell$ is bounded in
$L^\infty(0,\ell;Z)\cap H^1(0,\ell;V)$ and use \eqref{contdepV}.
Thus, on account of \cite[Thm.~2.2]{malek-prazak}, we can infer that
$\mathcal{A}_\ell$ has finite fractal dimension. In order to go back
to the original geometric space $V$, we introduce the evaluation
  mapping $e: \mathcal{X}_\ell \to V,$
$e(\chi):=\chi(\ell)$. Then, we set $B^1:=e(B^1_\ell)$ and we note
that, for any $\chi_0\in B^1$, there is a unique solution to
Problem~\ref{p:2}, so that the solution operator $S_t$ is well
defined on $B^1$ and $S_t(B^1)\subseteq B^1$, for all $t\geq 0$. In
addition, $e$ is (Lipschitz) continuous on $B^1_\ell$ (use
\eqref{contdepV} and \cite[Lemma~2.1]{malek-prazak} once more).
Therefore, we use \cite[Thm.~2.4]{malek-prazak} to deduce that
$\mathcal{A}:=e(\mathcal{A}_\ell)$ is the finite-dimensional global
attractor of the dynamical system $(B^1,S_t)$.

 It remains  to prove the existence of an exponential attractor. We already
know that $L_t$ is Lipschitz continuous on $B^1_\ell$, uniformly
with respect to $t\in [0,\tau]$ for every fixed $\tau>0$ (see
above). Thus, we only need to show that $t\mapsto L_t\chi$ is
H\"older continuous with values in $V$, uniformly with respect to
$\chi\in B^1_\ell$. This follows from
\cite[Lemma~2.2]{malek-prazak}, recalling that $B^1_\ell$ is, in
particular, bounded in $H^1(0,\ell;V)$. Hence,
$(\mathcal{X}_\ell,L_t)$ has an exponential attractor
$\mathcal{E}_\ell$ and $\mathcal{E}:=e(\mathcal{E}_\ell)$ is an
exponential attractor for $(B^1,S_t)$.

Summing up, we have proved the

\begin{maintheorem}
\label{th:5} Assume that \eqref{e:hyp1} holds for some $p\in[0,1]$. Also,
assume \eqref{hyp:5}, \eqref{e:hyp2-bis-visco}, and~\eqref{e:addphi}. Then, there exists a
bounded invariant set $B^1\subset V$ such that Problem~\ref{p:2}
generates a dynamical system $(B^1,S_t)$ which possesses an
exponential attractor $\mathcal{E}$. In addition, the system also has
a global attractor $\mathcal{A}$ with finite fractal dimension.
\end{maintheorem}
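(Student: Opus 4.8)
Here is a plan to prove Theorem~\ref{th:5}, implementing the short-trajectory method of~\cite{malek-prazak}; all the analytic ingredients have already been set up, so the argument is essentially an assembly.

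Fix $\ell>0$ and recall $X_\ell=L^2(0,\ell;V)$ and $Y_\ell=\{u\in L^2(0,\ell;Z):u_t\in L^2(0,\ell;H)\}$, with $Y_\ell\Subset X_\ell$. First I would build the space $\mathcal{X}_\ell$ of $\ell$-trajectories, that is, the restrictions to $(0,\ell)$ of the pairs $(\chi,w)$ furnished by Theorem~\ref{th:2} and Proposition~\ref{prop:regularized} under assumptions~\eqref{e:hyp1} (with $p\in[0,1]$), \eqref{hyp:5}, \eqref{e:hyp2-bis-visco}, and~\eqref{e:addphi}; these enjoy regularity~\eqref{reg-chi-bis}, \eqref{reg-w}, \eqref{e:further-reg-chi}, \eqref{e:further-reg-w}, so in particular $\chi\in Y_\ell$. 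Using the energy identity~\eqref{e:enid} together with the Gronwall-type computation of Section~\ref{s:3.1}, one checks that $\|\chi\|_{Y_\ell}$ is controlled by $\|\chi(0)\|_V$, and that there is a bounded set $B^0\subset V$ which is absorbing and positively invariant for the dynamics. The crucial structural point is that the dynamics becomes single-valued after a short time: by~\eqref{e:further-reg-chi} every $\ell$-trajectory satisfies $\chi(\tau_0)\in Z$ for some $\tau_0\in(0,\ell)$, and then Proposition~\ref{prop:regularized} and Theorem~\ref{th:3.2} force uniqueness from $\tau_0$ on. This makes the translation semigroup $L_t$, $(L_t\chi)(\tau):=\tilde\chi(t+\tau)$ on $[0,\ell]$ (with $\tilde\chi$ the unique extension), well defined on $\mathcal{X}_\ell$, and in particular circumvents the failure of the concatenation property for the ``approximable'' solution class.

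Next, set $B^0_\ell:=\{\chi\in\mathcal{X}_\ell:\chi(0)\in B^0\}$. The uniform regularization estimate~\eqref{uniform-gronwall} of Remark~\ref{uniform} gives that $\{\chi|_{[\ell/2,\ell]}:\chi\in B^0_\ell\}$ is bounded in $L^\infty(\ell/2,\ell;Z)$; combining this with a continuous-dependence estimate of the form~\eqref{contdepV}, derived as in the proof of Theorem~\ref{th:3.2}, and invoking~\cite[Lemma~2.1]{malek-prazak}, one obtains that $L_t$ is Lipschitz on $B^0_\ell$, uniformly for $t$ in bounded intervals. Arguing as in Section~\ref{s:3.3} one shows $B^1_\ell:=\overline{L_\tau(B^0_\ell)}^{X_\ell}\subseteq B^0_\ell$ for a suitable $\tau>0$, so~\cite[Thm.~2.1]{malek-prazak} yields a global attractor $\mathcal{A}_\ell$ for $(\mathcal{X}_\ell,L_t)$; since $B^1_\ell$ is bounded in $L^\infty(0,\ell;Z)\cap H^1(0,\ell;V)$, the map $L_\tau:\mathcal{X}_\ell\to Y_\ell$ is Lipschitz by~\eqref{contdepV}, and~\cite[Thm.~2.2]{malek-prazak} gives that $\mathcal{A}_\ell$ has finite fractal dimension. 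I would then transfer everything to the geometric phase space via the evaluation map $e:\mathcal{X}_\ell\to V$, $e(\chi):=\chi(\ell)$: putting $B^1:=e(B^1_\ell)$, each datum in $B^1$ generates a unique solution, so the solution operator $S_t$ is well defined on $B^1$ with $S_t(B^1)\subseteq B^1$; $e$ is Lipschitz on $B^1_\ell$ (again by~\eqref{contdepV} and~\cite[Lemma~2.1]{malek-prazak}), whence~\cite[Thm.~2.4]{malek-prazak} shows $\mathcal{A}:=e(\mathcal{A}_\ell)$ is a finite-dimensional global attractor for $(B^1,S_t)$. Finally, for the exponential attractor it remains to check that $t\mapsto L_t\chi$ is H\"older continuous into $V$, uniformly for $\chi\in B^1_\ell$; this follows from~\cite[Lemma~2.2]{malek-prazak} since $B^1_\ell$ is bounded in $H^1(0,\ell;V)$. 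Then~\cite{malek-prazak} produces an exponential attractor $\mathcal{E}_\ell$ for $(\mathcal{X}_\ell,L_t)$, and $\mathcal{E}:=e(\mathcal{E}_\ell)$ is the required exponential attractor for $(B^1,S_t)$.

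The main obstacle is the continuous-dependence/smoothing estimate on short trajectories in the ``large'' class of assumptions~\eqref{e:addphi}. Concretely, one must upgrade the energy-type bounds to the parabolic regularization from $V$-data to $Z$-data with constants depending only on $\|\chi(0)\|_V$ and $\ell^{-1}$ (this is Proposition~\ref{prop:regularized} together with Remark~\ref{uniform}), and then run a difference estimate between two solutions that is compatible with the $W^{2,6}$--$L^\infty$ integrability of $w$ available only for $p\in[0,1]$; the restriction $p\le 1$ enters precisely here, to keep the nonlinear contributions from $\alpha(w)$ and $\phi'(\chi)$ under control in the Lipschitz bound. Once this estimate is in place, the remaining steps are a routine verification of the hypotheses of the abstract results of~\cite{malek-prazak}.
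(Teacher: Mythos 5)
Your proposal follows the paper's own argument essentially verbatim: the same short-trajectory setup $(\mathcal{X}_\ell,L_t)$ built from Theorem~\ref{th:2}, Proposition~\ref{prop:regularized}/Remark~\ref{uniform}, and the uniqueness after regularization from Theorem~\ref{th:3.2} (which is exactly where $p\in[0,1]$ enters), with the same appeals to Lemmas~2.1--2.2 and Theorems~2.1, 2.2, 2.4 of~\cite{malek-prazak} and the same transfer to $V$ via the evaluation map. It is correct and matches the paper's proof.
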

\noindent
 Note that, in the framework of Theorem~\ref{th:5}, neither
 assumption~\eqref{e:hyp-add-alpha} nor \eqref{lim-infty-phi} are
 needed.
\section{Proofs of Theorems~\ref{th:1} and~\ref{th:2}}
\label{s:3}
\paragraph{Scheme of the  proofs of Theorems~\ref{th:1} and~\ref{th:2}.}
We will prove  Theorems~\ref{th:1} and~\ref{th:2} by taking the
limit of a suitable approximation scheme for Problems~\ref{p:1} and
\ref{p:2}. For the sake of readability, we postpone detailing
 such a scheme
in Appendix.

In Section~\ref{s:3.1}, we will instead   perform  all  estimates
leading to the aforementioned passage to the limit directly on
systems~\eqref{1-var}--\eqref{2-var}
and~\eqref{1-var-better}--\eqref{2-var-better}. Note that, at this
stage, some of the following calculations will only be formal,
cf.~Remark~\ref{solo-formale} below. Their rigorous justification
will be given in  Appendix, see Section~\ref{ss:a.1}.

Next, in Section~\ref{s:3.2} (in Section~\ref{s:3.3}, respectively), we
will carry out a passage to the limit in some unspecified
approximation scheme for Problem~\ref{p:1} (for Problem~\ref{p:2},
respectively) and conclude the (formal) proof of Theorem~\ref{th:1} (of
Theorem~\ref{th:2}, respectively).  In Section~\ref{ss:a.2}, we will adapt the
limiting arguments developed in Sections~\ref{s:3.2} and~\ref{s:3.3}
to  the approximation scheme for Problems~\ref{p:1} and~\ref{p:2}
and carry out the rigorous proofs of the related existence
theorems.
\begin{notation}
\label{not:3.1} \upshape
 We will  perform the
  a priori estimates on
 systems~\eqref{1-var}--\eqref{2-var} and~\eqref{1-var-better}--\eqref{2-var-better},
 distinguishing the ones which hold both in the viscous and  the
 non-viscous cases from the ones which depend
 on the constant
 $\delta$ in~\eqref{2-var-better} (which can be either strictly
  positive or
equal to zero), and
 on our different assumptions on the nonlinearity $\phi$ in the viscous and non-viscous cases.
 Accordingly, we will use the generic notation $C$  for most of the
 constants
appearing in the forthcoming calculations and
  depending on the problem
 data, and $C_\delta$ ($C_0$, respectively) for those constants \emph{substantially}
  depending on the problem data and on
 $\delta >0$
 (on  $\delta =0$, respectively).
  We will adopt the same
 convention for the constants $S^i$, $S^i_\delta$, $S_0^i$, $i\geq 1$.
\end{notation}
\subsection{A priori estimates}
\label{s:3.1}
\paragraph{First a priori estimate.} We
test~\eqref{1-var-better} by $w$, \eqref{2-var-better} by $\chi_t$,
add the resulting relations, and integrate over some time interval
$(0,t) \subset (0,T)$. Elementary calculations lead to
\begin{equation}
\label{est:1} \int_0^t \int_{\Omega} \alpha'(w) |\nabla w|^2 +
\delta \int_0^t \| \chi_t \|_{H}^2 + \frac12 \| \nabla \chi(t)
\|_{H}^2 + \int_{\Omega} \widehat{\phi} \left( \chi(t)\right)
=\frac12 \| \nabla \chi_0 \|_{H}^2 + \int_{\Omega} \widehat{\phi}
\left( \chi_0\right)\,.
\end{equation}
Recalling~\eqref{hyp:initial-datum}, the second of~\eqref{e:hyp1}
(which, in particular, yields that $\alpha'$ is bounded from below on
$\R$ by a positive constant)  and the positivity of
$\widehat{\phi}$ (cf.~\eqref{phicappucciopos}), we conclude that, for
some constant $S^1
>0$,
\begin{equation}
\label{e:stima1} \| \nabla w \|_{L^2 (0,T;H)} + \| \nabla \chi
\|_{L^\infty (0,T;H)} + \| \widehat{\phi}(\chi)\|_{L^\infty (0,T;L^1
(\Omega))} \leq S^1.
\end{equation}
\paragraph{First a priori estimate in the viscous case.} In the case
$\delta>0$,  from the previous a priori estimate we also have
\begin{equation}
\label{e:stima1-delta}  \| \chi_t \|_{L^2 (0,T;H)} + \|
\chi\|_{L^\infty (0,T;V)} \leq S_\delta^1\,.
\end{equation}
\paragraph{Second a priori estimate.} We test~\eqref{1-var-better}
by $1$ and find $\mo(\chi_t)=0$ a.e. in $(0,T)$, so that, in
particular,
\begin{equation}
\label{e:constant-m-value}
 \mo(\chi(t))=m_0:=\mo(\chi_0) \qquad \forall\, t \in [0,T].
\end{equation}
 Hence,
testing~\eqref{2-var-better} by $1$, we obtain
\begin{equation}
\label{est:2} \mo(\phi(\chi(t))) = \mo(w(t)) \qquad \forae \  t \in
(0,T).
\end{equation}
\paragraph{Second a priori estimate in the non-viscous case.} It
follows from \eqref{e:constant-m-value} and the Poincar\'e
inequality that
\begin{equation}
\label{est:2-deltazero} \| \chi\|_{L^\infty (0,T;V)} \leq S^2\,.
\end{equation}
\paragraph{Third a priori estimate in the non-viscous case.}
We test~\eqref{2-var} by $\chi-\mo(\chi)$: we have, for a.e. $t \in
(0,T)$,
\begin{equation}
\label{est:3}
\begin{aligned}
 \| \nabla \chi(t)  \|_{H}^2 +
\int_{\Omega} \phi(\chi(t)) \left( \chi(t) - \mo(\chi(t)) \right) &=
\int_{\Omega} w(t) \left( \chi(t) - \mo(\chi(t)) \right)  \\ & =
\int_{\Omega} \left( w(t) - \mo(w(t)) \right) \left( \chi(t) -
\mo(\chi(t)) \right)\\ &  \leq  C \| \nabla \chi(t) \|_{H} \| \nabla
w(t) \|_{H} \leq C S^1 \| \nabla w(t) \|_{H} ,
\end{aligned}
\end{equation}
the latter estimate ensuing from the Poincar\'e inequality for zero
mean value functions and the pre\-vious~\eqref{e:stima1}. On the
other hand, \eqref{e:2.2.14} and~\eqref{e:constant-m-value}  yield
that there exist  constants $C_{m_0}, C_{m_0}'>0$ such that, for
a.e.  $t \in [0,T]$,
\begin{equation}
\label{e:useful-later} \int_{\Omega} |\phi( \chi(t))| \leq C_{m_0}
\int_{\Omega} \phi(\chi(t)) \left( \chi(t) - \mo(\chi(t)) \right) +
C_{m_0}' \,.
\end{equation}
Combining this with \eqref{est:3}, we deduce that there exists $C>0$,
also depending on $C_{m_0}$ and on $C_{m_0}'$, such that,  for a.e.
$t \in (0,T)$,
\begin{equation}
\label{est:4}
 \int_{\Omega} |\phi( \chi(t))| \leq C \left(\| \nabla
w(t) \|_{H} + 1\right)\,.
\end{equation}
 Thus, in view
of~\eqref{e:stima1}, we obtain an estimate for $\phi(\chi)$ in $L^2
(0,T; L^1 (\Omega))$. Finally, due to~\eqref{est:2}, we find
$$
\| \mo(w) \|_{L^2 (0,T)} \leq C_0.
$$
Hence, by~\eqref{e:stima1} and the Poincar\'e inequality, we conclude
that
\begin{equation}
\label{e:stima3} \| w \|_{L^2 (0,T;V)} \leq S^1_0\,.
\end{equation}
\paragraph{Third a priori estimate in the viscous case.}
Estimate~\eqref{e:stima1} for $\widehat{\phi}(\chi)$
and~\eqref{hyp:5} yield that
\begin{equation}
\label{est:3-delta} \| \phi(\chi) \|_{L^\infty (0,T; L^1 (\Omega))
 } \leq S_\delta^2\,.
\end{equation}
Recalling~\eqref{est:2}, we immediately infer that
\begin{equation}
\label{est:4-delta} \| \mo(w) \|_{L^\infty (0,T)} \leq S_\delta^3,
\end{equation}
whence, again,
\begin{equation}
\label{est:5-delta} \| w \|_{L^2 (0,T; V)} \leq S_\delta^4.
\end{equation}
\paragraph{Fourth a priori estimate in the non-viscous case.}
We preliminarily observe that, thanks to~\eqref{e:hyp2-aftermath},
equation~\eqref{2-var} can be rewritten as
\begin{equation}
\label{e:more-convenient_form} A\chi + \beta(\chi) = w + C_{\phi,1}
\chi \qquad \aein \ \Omega \times (0,T)\,.
\end{equation}
 Notice that,
in view of~\eqref{est:2-deltazero} and \eqref{e:stima3}, the
right-hand side of~\eqref{e:more-convenient_form} belongs to
$L^2 (0,T; L^6 (\Omega))$. Hence, we can
test~\eqref{e:more-convenient_form} by
 $|\beta(\chi)|^4 \beta(\chi)$ and easily conclude  that
\[
 \|A\chi\|_{L^2 (0,T; L^6 (\Omega))} +\|\beta(\chi)\|_{L^2 (0,T;
L^6 (\Omega))} \leq C_0\,.
\]
Then,
 also  by standard elliptic regularity results, we find
\begin{equation}
\label{e:stima3-bis} \| \phi(\chi)\|_{L^2 (0,T; L^6 (\Omega))} +  \|
\chi \|_{L^2 (0,T;W^{2,6} (\Omega))} \leq S^2_0\,.
\end{equation}
\paragraph{Fourth a priori estimate in the viscous case.}
We combine~\eqref{e:stima1-delta}  and \eqref{est:5-delta} and
argue by comparison in~\eqref{2-var-better}. Relying
on~\eqref{e:hyp2-bis-visco} and on the related elliptic regularity
estimate, we have
\begin{equation}
\label{e:useful-again} \| \phi(\chi)  \|_{L^2 (0,T;H)} \leq
S^5_\delta\,,
\end{equation}
 as well as an estimate for $A\chi$  in
$L^2 (0,T; H)$, so that
\begin{equation}
\label{e:stima3-delta} \| \chi \|_{L^2 (0,T;Z)} \leq S^6_\delta\,.
\end{equation}
\paragraph{Fifth a priori estimate.} It follows from~\eqref{est:1}
and~\eqref{e:hyp1} that $\int_0^T \int_{\Omega} w^{2p} |\nabla w|^2
\leq C$, whence we conclude that
\begin{equation}
\label{est:3.14} \| \nabla (|w|^{p}w) \|_{L^2 (0,T;H)} \leq S^3\,.
\end{equation}
\paragraph{Sixth a priori estimate in the non-viscous case.}
From~\eqref{e:stima1}, \eqref{hyp:3}, and \eqref{e:stima3-bis},   we
deduce that
\begin{equation}
\label{e:crucial0}
 \||\phi(\chi)|^{\sigma}\|_{L^{2/\sigma}(0,T; L^{6/\sigma} (\Omega)) \cap L^\infty (0,T; L^1
 (\Omega))} \leq C_0\,.
\end{equation} Using the interpolation inequality
$$
\forall\, v \in L^1 (\Omega) \cap L^{6/\sigma} (\Omega) \quad  \| v
\|_{L^{1/\sigma} (\Omega)} \leq \| v \|_{L^1 (\Omega)}^\teta \, \| v
\|_{L^{6/\sigma} (\Omega)}^{1-\teta}, \quad \text{with $\teta =
\frac{5\sigma}{6-\sigma}$,}
$$
we obtain the estimate
\begin{equation}
\label{e:crucial}
 \||\phi(\chi)|^{\sigma}\|_{L^{q_\sigma}(0,T; L^{1/\sigma}
 (\Omega))} \leq C_0, \quad
  \text{with $q_\sigma=\frac{2}{\sigma} \, \frac{1}{1-\teta}= \frac{6-\sigma}{3\sigma-3\sigma^2}$,}
\end{equation}
whence a bound for $\phi(\chi)$ in $L^{\sigma q_\sigma} (0,T; L^1
(\Omega))$. Taking into account~\eqref{est:2}, we  conclude that
\begin{equation}
\label{e:3.15} \| \mo(w) \|_{L^{\sigma q_\sigma}(0,T)} \leq C_0\,,
\quad \text{whence} \quad \| |\mo(w)|^{p+1} \|_{L^{(\sigma
q_\sigma)/(p+1)}(0,T)} \leq C_0\,.
\end{equation}
On the other hand,
 applying the \emph{nonlinear} Poincar\'e
inequality~\eqref{e:poinc-gen} with the choices $X=V$, $Y=H$,
 $Gv= \nabla v$, and $\Psi(v)= |\Omega|^{-p-1}|\int_{\Omega} |v|^{\frac{1}{p+1}} \mathrm{sign}(v)|^{p+1}$, where
 $v=|w|^p w$, we find
 \begin{equation}
\label{e:poinc-est} \||w|^p w  \|_{V} \leq K \left( \| \nabla
(|w|^{p}w) \|_{H} + \left| \mo(w)\right|^{p+1}\right)\,.
 \end{equation}
Therefore,  combining  estimate~\eqref{e:3.15} for  $|\mo(w)|^{p+1}$
with \eqref{est:3.14}, we finally obtain, owing to the Poincar\'e inequality
\eqref{e:poinc-est},
\begin{equation}
\label{e:to-be-cited3} \| |w|^p w\|_{L^{(\sigma
q_\sigma)/(p+1)}(0,T; V)} \leq C_0.
\end{equation}
 Using the embedding $V \subset L^6
(\Omega)$ and the growth~\eqref{e:conse1} for $\alpha$, we
 infer
\begin{subequations}
\label{e:est-refined}
\begin{equation}
\label{e:est-refined_1}
 \| \alpha(w) \|_{L^{\eta_{p\sigma}} (0,T; L^{\kappa_p} (\Omega))}
 \leq S_0^3\,
\end{equation}
(where we have used the fact that $(\sigma q_\sigma)/(2p+1)$ equals
the index $\eta_{p\sigma}$ defined in~\eqref{e:index-notation}).
Hence,  by comparison in~\eqref{1-var}, we also conclude that
\begin{equation}
\label{e:3.18} \| \chi_t \|_{L^{\eta_{p\sigma}}} (0,T;
\sobneg{-2}{{\kappa_p}}) \leq S_0^4
\end{equation}
\end{subequations}
(see again~\eqref{e:index-notation} for the definition of
$\kappa_p$).
\paragraph{Sixth a priori estimate in the viscous case.}
Combining~\eqref{est:4-delta}, \eqref{est:3.14} and the
Poincar\'e-type inequality~\eqref{e:poinc-est}, we deduce an
estimate for $|w|^{p}w $ in $L^2 (0,T;V)$. Then, arguing in the same
way as for~\eqref{e:est-refined_1}, we have
\begin{equation}
\label{e:to-be-quoted}
 \| \alpha(w) \|_{L^{\rho_p} (0,T; L^{\kappa_p}
(\Omega))} \leq C_\delta,
\end{equation}
 the index $\rho_p$ being defined
in~\eqref{e:index-notation}.
 Now, in view of estimate~\eqref{e:stima1-delta}
for $\chi_t$ in $L^2 (0,T;H)$, a comparison in~\eqref{1-var-better}
yields an estimate for $A(\alpha (w))$ in $L^2 (0,T;H)$. By elliptic
regularity results, we finally conclude that
\begin{equation}
\label{e:3.19}
 \| \alpha(w) \|_{L^{\rho_p} (0,T; Z)} \leq S_\delta^7\,.
\end{equation}
\paragraph{Seventh a priori estimate in the non-viscous case.}
Our aim is now to show that
\begin{equation}
\label{useful2}
 \| \phi(\chi) \|_{L^{\sigma q_\sigma} (0,T;L^{6}(\Omega))} \leq S_0^5, \qquad \text{with $\sigma q_\sigma =
\frac{6-\sigma}{3-3\sigma}>2$\,.}
\end{equation}
Indeed, again recalling the embedding $V \subset L^6 (\Omega)$, we
observe that \eqref{e:to-be-cited3} yields an estimate for $w$ in
$L^{\sigma q_\sigma}(0,T; L^{6p+6}(\Omega))$. Then, taking into
account estimate~\eqref{est:2-deltazero} for $\chi$ in $L^\infty
(0,T; L^6 (\Omega))$, together with   the aforementioned elliptic regularity
argument, we find estimate~\eqref{useful2} by a comparison
in~\eqref{e:more-convenient_form}.
\paragraph{Seventh a priori estimate in the viscous case.}
 We combine estimate~\eqref{e:3.19}, the continuous embedding $Z \subset
 L^\infty(\Omega)$, and the growth condition~\eqref{e:conse1} to
 deduce  an estimate for
 $w$ in $L^{\rho_p (2p+1)} (0,T;L^\infty (\Omega))$,
 whence
\begin{equation}
\label{e:3.22} \| w\|_{L^{2p+2} (0,T; L^{\infty}(\Omega))} \leq
S_\delta^8\,.
\end{equation}
\begin{remark}
\upshape \label{solo-formale} Notice that all the
 a priori estimates   for the viscous Problem~\ref{p:2}
     are in fact rigorously justified   on
 system~\eqref{1-var-better}--\eqref{2-var-better}. This  has
 significant repercussions on the long-time analysis of
 Problem~\ref{p:2}. Indeed, this allows us to work with the semiflow
 associated with the solutions to Problem~\ref{p:2} (cf.~\eqref{e:gen-semiflow})
 and prove the existence of a global attractor in the sense of \cite{Ball97}.
 However, as pointed out in Remark~\ref{zeta}, if we  address
  further regularity
 properties of the attractor (e.g., \eqref{e:further-reg-att}),
 then we need additional estimates which cannot be performed
 directly on
 system~\eqref{1-var-better}--\eqref{2-var-better}, due to insufficient regularity of the solutions. Thus, we
  have to rely  on some
 approximation. On the one hand, this leads to a smoother
 attractor $\mathcal{A}$, but, on the other hand, we lose the concatenation
 property of the trajectories (cf.~\cite{rossi-segatti-stefanelli08, segatti06});
 moreover,
 only  trajectories which are limits of the approximation
 scheme will be attracted by the smoother attractor $\mathcal{A}$.

 We also point out that the viscous system~\eqref{1-var-better}--\eqref{2-var-better}
 cannot be used as an approximation for the non-viscous problem.
Indeed, it is not difficult to realize that  the fourth a priori
estimates~\eqref{e:more-convenient_form}--\eqref{e:stima3-bis}
(yielding a bound for $\phi(\chi)$ which plays a crucial role in the
ensuing calculations)
 are not compatible with the  term
$\delta\chi_t$ in~\eqref{2-var-better}.

This fact seems to suggest  the use of two different approximation
schemes for Problem~\ref{p:1} and Problem~\ref{p:2}, which would
lead to cumbersome and repetitious calculations.
 In order to circumvent this problem, we will construct in
 Appendix an approximation scheme depending on two distinct
 parameters and prove the existence of solutions to
 Problem~\ref{p:1} by passing to the limit in  three
 steps. Since the (rigorous) proof of existence for Problem~\ref{p:2} can be
 performed along the very same lines, we have chosen not to detail
 it in  Appendix.
\end{remark}
\subsection{Proof of Theorem \ref{th:1}}
\label{s:3.2} Let $\{ (\chi_n, w_n )\} $ be some sequence of
approximate solutions to Problem~\ref{p:1}. Due to
estimates~\eqref{e:stima1}, \eqref{est:2-deltazero},
\eqref{e:stima3}, \eqref{e:stima3-bis}, and~\eqref{e:est-refined},
applying standard compactness and weak compactness results
(see~\cite{simon}),  we find that there exists a pair $(\chi,w)$
with the regularities specified by~\eqref{reg-chi}--\eqref{reg-w0}
such that, along a (not relabeled) subsequence, the following
strong, weak, and weak$^*$ convergences hold as $n \to +\infty$:
\begin{align}
 \label{e:conv-chi-strong}
 &
\begin{aligned}
  \chi_n \to \chi \quad &\text{in \ $L^2
(0,T; W^{2-\eps,6} (\Omega)) \cap L^q (0,T;V) \cap \CC^0 ([0,T];
H^{1-\eps} (\Omega))$} \\ &  \text{for  every $\eps >0 $ and $1 \leq
q < +\infty$,}
\end{aligned}
\\
& \label{e:conv-chi-weak1}
 \chi_n \weaksto \chi \quad  \text{in \ $L^2
(0,T; W^{2,6} (\Omega)) \cap L^\infty (0,T;V)$,} \\
& \label{e:conv-chi-weak2}
  \chi_{n,t}
\weakto \chi_t \quad \text{in \ $L^{\eta_{p\sigma}} (0,T;
\sobneg{-2}{\kappa_p})$,}
\\
& \label{e:conv-w-strong} w_n  \weakto w \quad \text{in \ $L^2 (0,T;
V)$.}
\end{align}
Furthermore, there exists $\bar{\alpha} \in L^{\eta_{p\sigma}} (0,T;
L^{\kappa_p} (\Omega))  $ such that
\begin{equation}
\label{e:conv-alpha-weak} \alpha(w_n) \weakto \bar{\alpha} \quad
\text{in \ $L^{\eta_{p\sigma}} (0,T; L^{\kappa_p} (\Omega))$.}
\end{equation}
Now, estimate \eqref{useful2} in particular yields (recall that
$\sigma q_\sigma >2$) that
\begin{equation}
\label{e:cruc-pass1} \text{the sequence $\{ \phi(\chi_n)\}$ is
uniformly integrable in $L^2 (0,T;H)$.}
\end{equation}
  Furthermore, we have, up  to a further
subsequence,
\begin{equation}
\label{e:cruc-pass2} \phi(\chi_n(x,t)) \to \phi(\chi(x,t)) \qquad
\forae\, (x,t) \in \Omega \times (0,T)\,,
\end{equation}
which is a consequence of the continuity of $\phi$ and of the
pointwise convergence (up to a further subsequence)
 $\chi_n(x,t) \to \chi(x,t)$ a.e. in $\Omega \times (0,T)$ (cf.~\eqref{e:conv-chi-strong}).
Combining~\eqref{e:cruc-pass1} and~\eqref{e:cruc-pass2} and
 recalling the compactness criterion Theorem~\ref{t:ds}, we
conclude that
\begin{equation}
\label{e:strong-phi} \phi(\chi_n) \to \phi(\chi) \qquad \text{in \
$L^2 (0,T; H). $}
\end{equation}
Exploiting \eqref{e:conv-chi-strong}--\eqref{e:strong-phi}, one
easily concludes that the triplet $(\chi,w,\bar{\alpha})$ satisfies
\begin{align}
& \label{1-var-lim} \chi_t + A \bar{\alpha} =0 \qquad \text{in
$\sobneg{-2}{\kappa_p}$} \quad \aein \ (0,T)\,,
\\
& \label{2-var-lim} A\chi + \phi(\chi) =w \qquad \aein \ \Omega
\times (0,T)\,.
\end{align}
Finally, in order to prove that
\begin{equation}
\label{final-step} \bar{\alpha} (x,t) = \alpha (w(x,t)) \qquad
\forae\ (x,t) \in \Omega \times (0,T)\,,
\end{equation}
we test the equation approximating \eqref{2-var} by $w_n$ and
integrate in time. We thus have
$$
\begin{aligned}
\lim_{n \to +\infty}\int_0^T \int_{\Omega} |w_n|^2 &= \lim_{n \to
+\infty} \int_0^T \int_{\Omega}\phi(\chi_n) w_n + \lim_{n \to +\infty}
\int_0^T \int_{\Omega}\nabla \chi_n \cdot \nabla w_n \\ & = \int_0^T
\int_{\Omega}\phi(\chi) w+ \int_0^T \int_{\Omega}\nabla \chi
\cdot\nabla w\\ & =\int_0^T \int_{\Omega} |w|^2,
\end{aligned}
$$
where the second equality follows from
convergences~\eqref{e:conv-chi-strong}, \eqref{e:conv-w-strong},
and~\eqref{e:strong-phi},  and the last one from~\eqref{2-var-lim}.
Hence, we conclude that
$$
w_n \to w \ \  \text{in $L^2 (0,T;H)$, \ whence} \quad w_n \to w \ \
\aein \ \Omega \times (0,T)
$$
(the latter convergence holding up to a subsequence). By continuity
of $\alpha$, we also have $\alpha(w_n) \to \alpha (w)$ a.e. in
$\Omega \times (0,T)$. Estimate~\eqref{e:est-refined_1} (recall
\eqref{e:really-necessary}) and again Theorem~\ref{t:ds} yield, for
instance, that
$$
\alpha(w_n) \to \alpha(w) \quad \text{in $L^1 (0,T; L^1 (\Omega))$,}
$$
whence the desired equality~\eqref{final-step}.
 \fin
\subsection{Proofs of Theorem \ref{th:2} and Proposition~\ref{prop:regularized}}
\label{s:3.3}
\paragraph{Proof of Theorem \ref{th:2}.}
 Let $\{ (\chi_n, w_n )\} $ be some sequence of
approximate solutions to Problem~\ref{p:2}. Thanks to
estimates~\eqref{e:stima1}, \eqref{e:stima1-delta},
\eqref{est:5-delta}, \eqref{e:stima3-delta}, \eqref{e:3.19},
and~\eqref{e:3.22}, applying standard compactness and weak
compactness results (see~\cite{simon}),
 we find a triplet
$(\chi,w,\bar{\alpha})$ such
 that, along a (not relabeled)
subsequence, the following strong, weak, and weak$^*$ convergences
hold as $n \to +\infty$:
\begin{align}
 \label{e:conv-chi-strong-v}
 &
\begin{aligned}
  \chi_n \to \chi \quad &\text{in \ $L^2
(0,T; H^{2-\eps} (\Omega)) \cap L^q (0,T;V) \cap \CC^0 ([0,T];
H^{1-\eps} (\Omega))$} \\ &  \text{for  every $\eps >0 $ and $1 \leq
q < +\infty$,}
\end{aligned}
\\
& \label{e:conv-chi-weak-v}
\begin{aligned}
  \chi_n \weaksto \chi \quad &  \text{in \ $L^2
(0,T; Z) \cap L^\infty (0,T;V) \cap H^1 (0,T;H)$,}
\end{aligned}
\\
& \label{e:conv-w-strong-v} w_n  \weaksto w \quad \text{in \ $L^2
(0,T; V) \cap L^{2p+2} (0,T; L^{\infty}(\Omega))$.}
\end{align}
In particular, from~\eqref{e:conv-chi-weak-v}, we deduce that
\begin{equation}
\label{step-1} \mn(\chi_{n,t}) \weakto \mn(\chi_t) \qquad \text{in
$L^2 (0,T; Z)$.}
\end{equation}
Furthermore,  by \eqref{e:to-be-quoted}, there exists $\bar{\alpha}
\in L^{\rho_p} (0,T; L^{\kappa_p} (\Omega)) $ such that
\begin{equation}
\label{e:conv-alpha-weak-v} \alpha(w_n) \weakto \bar{\alpha} \quad
\text{in \ $L^{\rho_p} (0,T; L^{\kappa_p} (\Omega))$.}
\end{equation}
Now, up to a subsequence, by the last of~\eqref{e:conv-chi-strong-v}
and  by continuity of $\phi$, we have, for all $t \in [0,T]$,
\begin{equation}
\label{step0} \phi(\chi_n (\cdot,t)) \to \phi(\chi(\cdot,t)) \qquad
\aein \  \Omega\,.
\end{equation}
 On the other
hand, it follows from estimate~\eqref{e:useful-again} that
\begin{equation}
\label{step00}
  \text{the sequence $\{ \phi(\chi_n )\}$ is
uniformly integrable in $L^1 (0,T;L^1 (\Omega))$.}
\end{equation}
 Then, by~\eqref{step0}--\eqref{step00} and
Theorem~\ref{t:ds}, we conclude
 that, along the same subsequence  as in~\eqref{step0}, $\phi(\chi_n) \to
 \phi(\chi)$ in $L^1 (0,T; L^1 (\Omega))$. We
 then have, up to a subsequence,
 \begin{equation}
\label{step1} \phi(\chi_n (t)) \to \phi(\chi(t)) \qquad \text{in
$L^1 (\Omega)$}  \ \ \forae\ t \in (0,T)\,.
 \end{equation}
Next,  using~\eqref{est:3-delta}, we see that
 $\phi(\chi_n)$  is
 uniformly integrable in $L^{\nu} (0,T; L^1 (\Omega))$ for all $\nu \in [1,+\infty)$.
 Applying Theorem~\ref{t:ds}, from \eqref{step1},
 we deduce that
  \begin{equation}
\label{step2} \phi(\chi_n ) \to \phi(\chi) \qquad \text{in $L^{\nu}
(0,T; L^1(\Omega))$ \ for every $\nu \in [1,+\infty)$.}
 \end{equation}
Collecting~\eqref{e:conv-chi-strong-v}--\eqref{e:conv-alpha-weak-v}
and \eqref{step2}, we conclude that the triplet
$(\chi,w,\bar{\alpha})$ satisfies
\begin{align}
& \label{1-var-lim-v} \chi_t + A \bar{\alpha} =0 \qquad \aein \
\Omega \times (0,T)\,,
\\
& \label{2-var-lim-v} \delta \chi_t + A\chi + \phi(\chi) =w \qquad
\aein \  \Omega \times (0,T)\,.
\end{align}
It remains to show  that $\bar{\alpha} \equiv \alpha(w)$. To this
aim, we note that $\alpha$ defines a maximal monotone graph in the
duality $(L^{2p+2} (\Omega\times (0,T)), L^{\rho_p} (\Omega\times
(0,T)))$ (note that $\rho_p$ and $2p+2$ are conjugate exponents).
Taking into account relations \eqref{e:conv-w-strong-v}
and~\eqref{e:conv-alpha-weak-v}, and applying  a well-known result
from the theory of maximal monotone operators in Banach spaces
(see~\cite[Lemma~1.3, p.~42]{barbu}), it is then sufficient to prove
that
\begin{equation}
\label{3.40} \limsup_{n\to +\infty} \int_{0}^T \int_{\Omega}
\alpha(w_n) w_n \leq \int_{0}^T \int_{\Omega} \bar{\alpha} w\,.
\end{equation}
Now,
\begin{equation}
\label{a1}
\begin{aligned}
\int_{0}^T \int_{\Omega} \alpha(w_n) w_n & =\int_{0}^T \int_{\Omega}
\big(\alpha(w_n)-\mo(\alpha(w_n))\big)\, w_n +|\Omega| \int_{0}^T
\mo(\alpha(w_n))\, \mo(w_n)\\ & = -\int_{0}^T\int_{\Omega}
{w_n}\,{\mn(\chi_{n,t})} +|\Omega| \int_{0}^T \mo(\alpha(w_n))\,
\mo(w_n)\,,
\end{aligned}
\end{equation}
where the second equality follows from~\eqref{1-var-better}. Then,
using~\eqref{Aa} and \eqref{2-var-better}, we find the chain of
inequalities
\begin{equation}
\label{e:3.40}
\begin{aligned}
 \liminf_{n \to +\infty}
& \Big( \int_{0}^T \int_{\Omega}{w_n}\,{\mn(\chi_{n,t})}\Big)
\\
&  \geq  \liminf_{n \to +\infty} \delta \int_{0}^T \| \chi_{n,t}
\|_{V'}^2 + \lim_{n \to +\infty} \int_{0}^T \int_{\Omega} \chi_{n,t}
\chi_n  + \lim_{n \to +\infty} \int_{0}^T \int_{\Omega}\phi(\chi_n)
\mn(\chi_{n,t})\\ & \geq \delta \int_{0}^T \| \chi_{t} \|_{V'}^2 +
\int_0^T \int_{\Omega} \chi_t \chi + \int_{0}^T
\int_{\Omega}\phi(\chi) \mn(\chi_{t})\\ &= \int_{0}^T \int_{\Omega}
{w}\,{\mn(\chi_{t})} =-\int_{0}^T \int_{\Omega}
\big(\bar{\alpha}-\mo(\bar{\alpha})\big)\, w\,,
\end{aligned}
\end{equation}
where the second inequality follows from
convergences~\eqref{e:conv-chi-strong-v}
and~\eqref{e:conv-chi-weak-v} for $\chi_n$ and from
combining~\eqref{step-1} with~\eqref{step2}, while the subsequent
identities are due to~\eqref{1-var-lim-v}--\eqref{2-var-lim-v}. On
the other hand, it follows from~\eqref{e:conv-alpha-weak-v} that
\begin{equation}
\label{step3} \mo(\alpha(w_n)) \weakto \mo(\bar{\alpha}) \qquad
\text{in $L^{\rho_p} (0,T)$,}
\end{equation}
 whereas, from~\eqref{step2}, we gather
that
\begin{equation}
\label{step4} \mo(w_n)= \mo(\phi (\chi_n)) \to
\mo(\phi(\chi))=\mo(w) \qquad \text{in $L^{2p+2} (0,T)$.}
\end{equation}
Combining \eqref{step3}--\eqref{step4}, we conclude that
\begin{equation}
\label{e:3.41} \lim_{n \to +\infty} |\Omega| \int_{0}^T
\mo(\alpha(w_n))\, \mo(w_n)=
 |\Omega| \int_{0}^T \mo(\bar{\alpha})\, \mo(w)\,.
\end{equation}
Collecting~\eqref{a1}, \eqref{e:3.40},  and \eqref{e:3.41}, we infer
the desired~\eqref{3.40}.  Ultimately, we have proved that
\begin{equation}
\label{e:for-later-convenience}
 \alpha(w_n) \weakto \alpha(w) \ \
\text{in \ $L^{\rho_p} (0,T; L^{\kappa_p} (\Omega))$  \ \ and} \ \
\lim_{n\to +\infty} \int_{0}^T \int_{\Omega} \alpha(w_n) w_n =
\int_{0}^T \int_{\Omega} \alpha(w) w\,.
\end{equation}
 \fin 
\paragraph{Proof of Proposition~\ref{prop:regularized}.}\,
In order to prove that
system~\eqref{1-var-better}--\eqref{2-var-better} enjoys the
regularization in
time~\eqref{e:further-reg-chi}--\eqref{e:further-reg-w},   using the
Gagliardo-Nirenberg interpolation inequality we note that
 $L^2(0,T; Z) \cap L^\infty (0,T;V) \subset L^8 (0,T; W^{1,12/5}(\Omega)) $
 with continuous embedding. Therefore,
 regularity~\eqref{reg-chi-bis} for $\chi$ and standard Sobolev embeddings yield
\begin{equation}
\label{e:stichi} \| \chi\|_{L^8 (0,T; L^{12}(\Omega))} \leq C\,.
\end{equation}

Now, we test \eqref{2-var-better} by $A\chi_t$. 
Note that all the forthcoming computations are rigorous on the
ap\-pro\-xi\-ma\-tion scheme for Problem~\ref{p:2} which we will
detail in  Appendix.
 Elementary calculations yield
\begin{equation}
\label{e:elem1}
\begin{aligned}
    \frac{\dd}{ \dd
t}\left(\frac12 \int_{\Omega}|A\chi|^2 \right) +
\delta\int_{\Omega}|\nabla \chi_t|^2 = I_1 + I_2\,,
\end{aligned}
\end{equation}
with
\begin{align}
&
 \label{e:elem2}    I_1  :=  \int_{\Omega} \nabla w
\,\cdot\, \nabla \chi_t \leq \frac{\delta}2\int_{\Omega} |\nabla
\chi_t|^2 + \frac{1}{2\delta}\int_{\Omega} |\nabla w|^2\,,
\\
&  \label{e:elem3}
\begin{aligned}
 I_2   &  := -  \int_{\Omega}  \phi{'}(\chi) \left(\nabla\chi
 \cdot \nabla \chi_t\right)
\\ & \leq C_{\phi,3} \int_{\Omega}|\nabla \chi||\nabla
\chi_t| \left( 1+|\chi|^4\right)\\ &  \leq C \| \nabla \chi_t \|_{H}
\| \nabla \chi \|_{L^6(\Omega)} \left(\| \chi\|_{L^{12}(\Omega)}^4 +
1 \right)\\ &  \leq \frac{\delta}{4}\| \nabla \chi_t \|_{H}^2 + C
\left(\| \chi\|_{L^{12}(\Omega)}^8 + 1 \right) \| \chi \|_{Z}^2\,,
\end{aligned}
\end{align}
 where the second inequality follows from~\eqref{e:addphi}, the
 third one from the H\"{o}lder inequality, and the last one by taking
 into account the continuous embedding $Z \subset W^{1,6}(\Omega)$.

Collecting~\eqref{e:elem1}--\eqref{e:elem2}, taking into
account~\eqref{e:stichi},  and applying the uniform
 Gronwall Lemma (see \cite[Lemma~III.1.1]{temam}), we find for every $\tau>0$ an
estimate of the form \eqref{uniform-gronwall} for $\nabla \chi_t $
in $L^2 (\tau,T;H) $ and for $A\chi $ in $L^\infty (\tau,T;H)$,
whence
\[
\chi \in L^\infty (\tau,T;Z) \cap H^1 (\tau,T;V)\quad \text{for all
} 0<\tau<T\,.
\]
 Then, a comparison in~\eqref{1-var-better} also yields a bound
for $A(\alpha(w))$ in $L^{\rho_p}(\tau,T;V)$, whence an estimate for
$\alpha(w) $ in $L^{\rho_p}(\tau,T;H^3(\Omega))$, in view
of~\eqref{reg-w}.
 Thus, we conclude
\eqref{e:further-reg-chi}--\eqref{e:further-reg-w}, as well as
estimate \eqref{uniform-gronwall}. \fin

\section{Global attractor for Problem~\ref{p:2}}
\label{s:5}
\subsection{Proof of Proposition~\ref{prop:2.1}}
\label{ss:5.1} We need two preliminary lemmas. The first one
clarifies some properties of the energy
functional~$\ene$~\eqref{e:ene-funct}.
\begin{lemma}
\label{l:5.1} Assume~\eqref{hyp:5}--\eqref{e:hyp2-bis-visco}. Then,
the functional $\ene: \cx \to \R$ defined by~\eqref{e:ene-funct} is
bounded from below, lower-semicontinuous w.r.t. the $H$-topology,
and  satisfies the chain rule
\begin{equation}
\label{e:l5.1}
\begin{gathered}
\text{for all $v \in H^1(0,T;H)$ with $Av+ \phi(v)  \in
L^2(0,T;H)$,} \\
\text{ the map $t \in [0,T] \mapsto \ene(v(t))$ is absolutely
continuous, and}
\\
\frac{\mathrm{d}}{\mathrm{d}t} \ene(v(t)) = \int_{\Omega}  v_t(t)
\left(Av(t)+ \phi(v(t)) \right) \qquad \forae\ t \in (0,T)\,.
\end{gathered}
\end{equation}
\end{lemma}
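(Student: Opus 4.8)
\emph{Proof proposal.} The boundedness from below is immediate: by \eqref{phicappucciopos} one has $\widehat{\phi}\geq0$, hence $\ene(v)\geq\tfrac12\|\nabla v\|_H^2\geq0$ for every $v\in\cx$. For the lower semicontinuity with respect to $H$-convergence, the plan is as follows. Given $v_n\to v$ in $H$, I would assume $\ell:=\liminf_n\ene(v_n)<+\infty$ (otherwise nothing to prove) and pass to a subsequence along which $\ene(v_n)\to\ell$ and $\ene(v_n)\leq C$. Since $\widehat{\phi}\geq0$, this bounds $\{v_n\}$ in $V$, and, combined with $v_n\to v$ in $H$, yields (up to a further subsequence) $v\in V$, $v_n\weakto v$ in $V$, and $v_n\to v$ a.e.\ in $\Omega$; in particular $\nabla v_n\weakto\nabla v$ weakly in $L^1(\Omega)$. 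I would then apply the Ioffe-type lower semicontinuity result of Theorem~\ref{th-ioffe} to the integrand $f(x,u,\xi):=\tfrac12|\xi|^2+\widehat{\phi}(u)$ (non-negative, continuous, convex in $\xi$), with $u_n=v_n$ and $\xi_n=\nabla v_n$, obtaining $\ene(v)\leq\liminf_n\ene(v_n)$.

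The heart of the statement is the chain rule \eqref{e:l5.1}. The first step would be to upgrade the assumption $Av+\phi(v)\in L^2(0,T;H)$ to the separate regularities $v\in L^2(0,T;Z)$ and $\phi(v)\in L^2(0,T;H)$. Indeed, by \eqref{e:hyp2-bis-visco} the function $\beta(r):=\phi(r)+C_{\phi,2}r$ is non-decreasing, so that for a.a.\ $t$ the identity $Av(t)+\phi(v(t))=w(t)$ reads $Av(t)+\beta(v(t))=w(t)+C_{\phi,2}v(t)$, an elliptic (Neumann) equation with maximal monotone nonlinearity and right-hand side in $H$; by standard elliptic regularity for such equations (see, e.g., \cite{barbu}) one gets $v(t)\in Z$, $\beta(v(t))\in H$, together with $\|v(t)\|_Z+\|\phi(v(t))\|_H\leq C(\|w(t)\|_H+\|v(t)\|_H)$ --- the last estimate being obtained by testing the equation by $\beta(v(t))$ and using $\int_\Omega\nabla v\cdot\nabla\beta(v)=\int_\Omega\beta'(v)|\nabla v|^2\geq0$. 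Integrating in time gives the claimed bounds, and then \eqref{e:phi-add} together with $v(t)\in Z\subset L^\infty(\Omega)$ shows $\widehat{\phi}(v(t))\in L^1(\Omega)$ for a.a.\ $t$, so that $\ene(v(t))$ is finite a.e.

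Next I would split $\ene=\ene_1+\ene_2$, with $\ene_1(v):=\tfrac12\|\nabla v\|_H^2$ and $\ene_2(v):=\int_\Omega\widehat{\phi}(v)$, and handle the two pieces separately. For $\ene_1$: since $v\in L^2(0,T;Z)\cap H^1(0,T;H)$ and $A$ is self-adjoint with $(Av,v)_H=\|\nabla v\|_H^2$ on $Z$, a standard chain rule (see, e.g., \cite{temam}) gives that $t\mapsto\ene_1(v(t))$ is absolutely continuous with derivative $(Av(t),v_t(t))_H$. For $\ene_2$: by \eqref{e:lambda-convex} I would write $\widehat{\phi}(r)=g(r)-\tfrac{C_{\phi,2}}{2}r^2$ with $g$ convex and $g'=\beta$ non-decreasing, so that $\ene_2(v)=G(v)-\tfrac{C_{\phi,2}}{2}\|v\|_H^2$, where $G(v):=\int_\Omega g(v)$ is convex, proper and lower semicontinuous on $H$; moreover, by a standard identification (cf., e.g., \cite{barbu}), $\partial G$ is the realization in $H$ of the Nemytskii operator associated with $g'=\beta$, with domain $\{u\in H:\beta(u)\in H\}$. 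Since $\beta(v)=\phi(v)+C_{\phi,2}v\in L^2(0,T;H)$ provides an $L^2$-integrable selection $t\mapsto\beta(v(t))\in\partial G(v(t))$, the chain rule for convex functionals along absolutely continuous ($H^1$-in-time) curves (see \cite{barbu}) yields that $t\mapsto G(v(t))$ is absolutely continuous with derivative $(\beta(v(t)),v_t(t))_H$. Subtracting the absolutely continuous map $t\mapsto\tfrac{C_{\phi,2}}{2}\|v(t)\|_H^2$ (derivative $C_{\phi,2}(v(t),v_t(t))_H$) then gives that $t\mapsto\ene_2(v(t))$ is absolutely continuous with derivative $(\phi(v(t)),v_t(t))_H$. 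Adding the two contributions gives exactly \eqref{e:l5.1}.

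The main obstacle is this chain rule, and within it the $\ene_2$ term: because $\widehat{\phi}$ is only a quadratic perturbation of a convex function, one cannot apply a convex chain rule directly, and the decomposition above is unavoidable. That decomposition in turn hinges on the preliminary elliptic-regularity step upgrading the hypothesis ``$Av+\phi(v)\in L^2(0,T;H)$'' to the separate controls of $v$ in $L^2(0,T;Z)$ and of $\phi(v)$ in $L^2(0,T;H)$ --- the latter being precisely what makes $t\mapsto\beta(v(t))$ an admissible $L^2$-selection of $\partial G(v(\cdot))$, thus legitimizing the convex chain rule.
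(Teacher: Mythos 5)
Your proof is correct, and it reaches \eqref{e:l5.1} by a route that is organized differently from the paper's, although the two share the same two pillars: Ioffe's Theorem~\ref{th-ioffe} for the lower semicontinuity and the Br\'ezis chain rule \cite[Lemma~III.3.3]{brezis73} exploited through the $\lambda$-convexity \eqref{e:lambda-convex}. For the lower semicontinuity, the paper splits $\ene$ into the gradient part (handled by weak lower semicontinuity of the $V$-seminorm) and the potential part (handled by adding $\tfrac{C_{\phi,2}}{2}\int|v|^2$ and using convexity via Ioffe), while you apply Ioffe once to the full integrand $\tfrac12|\xi|^2+\widehat{\phi}(u)$, convex in $\xi$; both work, yours being marginally more compact. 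For the chain rule, the paper treats $\mathcal{E}_{\mathrm{cv}}(v)=\ene(v)+\tfrac{C_{\phi,2}}{2}\int_\Omega|v|^2$ (cf.~\eqref{funz-bar}) as a single convex functional on $H$ and invokes the Br\'ezis lemma once, whereas you first upgrade the hypothesis ``$Av+\phi(v)\in L^2(0,T;H)$'' to the separate controls $v\in L^2(0,T;Z)$ and $\phi(v)\in L^2(0,T;H)$ by elliptic regularity for $A+\beta$, and then apply the chain rule to the gradient functional and to the convex potential functional separately, subtracting the quadratic correction at the end. What your longer route buys is precisely the point the paper leaves to the citation: to use \cite[Lemma~III.3.3]{brezis73} one must exhibit an $L^2(0,T;H)$-selection of $\partial\mathcal{E}_{\mathrm{cv}}(v(\cdot))$, and verifying that $Av+\phi(v)+C_{\phi,2}v$ is such a selection amounts to the subdifferential identification that your elliptic-regularity step (and the standard characterization of the subdifferential of $u\mapsto\int_\Omega g(u)$) makes explicit; in the paper's application this is harmless because the solutions of Problem~\ref{p:2} already enjoy $\chi\in L^2(0,T;Z)$ and $\phi(\chi)\in L^2(0,T;H)$ separately, but under the bare hypothesis of the lemma your preliminary step is a genuine (and welcome) completion of the argument, at the price of a longer proof.
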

\begin{proof}
In order to prove the lower-semicontinuity property, we fix a
sequence $\{v_n \}$ converging to some $v $ in $H$ and assume,
without loss of generality, that $\sup_n \ene(v_n) <+\infty$.
 Since
$\widehat{\phi}$ is bounded from below, we conclude that $\{v_n\}$
is actually bounded in $V$, and thus $v_n \weakto v$ in $V$,
yielding $\textstyle \int_{\Omega} |\nabla v|^2  \leq \liminf_{n}
\int_{\Omega} |\nabla v_n|^2$. On the other hand,
\[
\begin{aligned}
\liminf_{n \to +\infty} \int_{\Omega} \widehat{\phi}(v_n) &  =
\liminf_{n \to +\infty} \int_{\Omega}
\left(\widehat{\phi}(v_n)+\frac{C_{\phi,2}}{2} |v_n|^2 \right) -
\frac{C_{\phi,2}}2\lim_{n \to +\infty} \int_{\Omega}|v_n|^2  \\ &
\geq \int_{\Omega}\left( \widehat{\phi}(v)+\frac{C_{\phi,2}}2
|v|^2\right) -\frac{C_{\phi,2}}2\int_{\Omega}|v|^2\,, \end{aligned}
\]
the latter inequality following from~\eqref{e:lambda-convex}  and, for
instance, from Ioffe's Theorem~\ref{th-ioffe}.
 Finally, to check the chain
rule~\eqref{e:l5.1}, we observe that the functional
\begin{equation}
\label{funz-bar} \mathcal{E}_{\mathrm{cv}} (v): = \mathcal{E}(v)  +
\frac{C_{\phi,2}}{2}\int_{\Omega} |v|^2 \qquad \text{for all $v \in
X$}
\end{equation}
is convex, thanks to~\eqref{e:lambda-convex}. Then, \eqref{e:l5.1}
follows from the chain rule for $ \mathcal{E}_{\mathrm{cv}}$,
see~\cite[Lemma~III.3.3]{brezis73}.
\end{proof}
\begin{lemma}
\label{l:5.2} Assume~\eqref{e:hyp1}. Then,
\begin{equation}
\label{e:5.2.2} \text{for all $w \in V \cap L^\infty (\Omega)$, there
holds} \ \ \nabla \alpha(w(x)) = \alpha'(w(x)) \nabla w(x) \ \
\forae\ x \in \Omega\,.
\end{equation}
\end{lemma}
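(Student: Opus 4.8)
The claim is a chain-rule statement for the Nemytskii operator $w \mapsto \alpha(w)$, to be justified for $w \in V \cap L^\infty(\Omega)$. The natural plan is a two-step approximation argument. First I would note that, since $w \in L^\infty(\Omega)$, say $\|w\|_{L^\infty(\Omega)} \leq M$, the relevant values of $\alpha$ and $\alpha'$ are confined to the compact interval $[-M,M]$; by \eqref{e:hyp1}, $\alpha$ is $C^1$ and hence $\alpha'$ is continuous, so $\alpha$ restricted to $[-M,M]$ is Lipschitz, with Lipschitz constant $L_M := \max_{|r|\leq M}\alpha'(r) = C_2(M^{2p}+1)$. This already gives $\alpha(w) \in V$: indeed, if $\{w_j\} \subset C^\infty(\overline\Omega)$ is a sequence with $w_j \to w$ in $V$ and $\|w_j\|_{L^\infty(\Omega)} \leq 2M$ for $j$ large (such a sequence exists by density, after truncation at level $2M$, which does not affect convergence since $|w|\leq M$ a.e.), then the classical chain rule gives $\nabla \alpha(w_j) = \alpha'(w_j)\nabla w_j$ a.e., and the right-hand side converges to $\alpha'(w)\nabla w$ in $L^2(\Omega;\R^3)$.

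For that last convergence I would argue as follows: $\nabla w_j \to \nabla w$ in $L^2(\Omega;\R^3)$, and $\alpha'(w_j) \to \alpha'(w)$ in measure (by continuity of $\alpha'$ and pointwise a.e.\ convergence of a subsequence of $w_j$) while remaining bounded by $\max_{|r|\leq 2M}\alpha'(r)$; hence $\alpha'(w_j)\nabla w_j \to \alpha'(w)\nabla w$ in $L^2$ by dominated convergence (after passing to a subsequence for the a.e.\ convergence, then using uniqueness of the limit to recover the full sequence). Meanwhile $\alpha(w_j) \to \alpha(w)$ in $L^2(\Omega)$ by the Lipschitz bound. Since $\nabla$ is closed from $L^2$ to $L^2$ (as a distributional operator), we conclude $\alpha(w) \in H^1(\Omega) = V$ and $\nabla \alpha(w) = \alpha'(w)\nabla w$ a.e.\ in $\Omega$, which is exactly \eqref{e:5.2.2}.

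An alternative, perhaps cleaner route is to invoke a standard result on composition in Sobolev spaces (e.g.\ Marcus--Mizel, or \cite[Thm.~A.1 type results]{}): if $f \in C^1(\R)$ with $f'$ bounded on bounded sets and $u \in H^1(\Omega) \cap L^\infty(\Omega)$, then $f(u) \in H^1(\Omega)$ with $\nabla f(u) = f'(u)\nabla u$. Here $f = \alpha$ qualifies by \eqref{e:hyp1}. I would probably still include the short approximation argument above rather than cite a black box, since it is elementary and self-contained.

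The only mild subtlety — which is the "main obstacle," though it is minor — is handling the polynomial growth of $\alpha'$: one must use the $L^\infty$-bound on $w$ to reduce to a Lipschitz situation, since without the bound $\alpha'(w)\nabla w$ need not even be integrable. The truncation of the approximating sequence $w_j$ at level $2M$ is the technical device that makes this precise. Everything else is routine density and dominated-convergence bookkeeping.
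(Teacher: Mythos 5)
Your proof is correct and follows essentially the same route as the paper: approximate $w$ by smooth functions, apply the classical chain rule to the approximants, and pass to the limit in both sides of the identity $\nabla\alpha(w_j)=\alpha'(w_j)\nabla w_j$ using the closedness of the distributional gradient. The only (harmless) difference is in the limit passage: you exploit the $L^\infty$-bound on $w$ to reduce to a Lipschitz situation and obtain convergence in $L^2(\Omega)$, while the paper keeps the polynomial growth of $\alpha'$ from~\eqref{e:hyp1}, deduces $\alpha(w_k)\to\alpha(w)$ and $\alpha'(w_k)\to\alpha'(w)$ in every $L^q(\Omega)$, $q<\infty$, and concludes convergence of the products merely in $L^{\rho}(\Omega)$ for $\rho\in[1,2)$, which is equally sufficient.
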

\begin{proof}
Since $\Omega$ is smooth, we can take a sequence $\{w_k \}\subset
\mathrm{C}^1 (\overline{\Omega})$ such that $w_k \to w$ in $V \cap
L^q (\Omega)$ for all $1\leq q<+\infty$. Clearly, for all $k \in \N$,
there holds
\begin{equation}
\label{e:clearl} \nabla \alpha(w_k(x)) = \alpha'(w_k(x)) \nabla
w_k(x) \qquad \forall\, x \in \Omega\,.
\end{equation}
Now, since $\alpha'(r)$  grows like $|r|^{2p}$ by~\eqref{e:hyp1}, we
conclude that $\alpha'(w_k)\to \alpha'(w) $ and $\alpha(w_k)\to
\alpha(w) $  in $L^q (\Omega)$ for all $1\leq q<+\infty$. Therefore,
$\nabla \alpha(w_k) = \alpha'(w_k) \nabla w_k \to \alpha'(w) \nabla
w $ in $L^{\rho}(\Omega)$ for all $\rho\in[1,2)$ and~\eqref{e:5.2.2}
follows.
\end{proof}
\paragraph{Proof of Proposition~\ref{prop:2.1}.}
Thanks to~Theorem~\ref{th:2}, the set $\sfl$ complies with the
existence axiom~\textbf{(P1)} in
Definition~\ref{def:generalized-semiflow}. The translation
property~\textbf{(P2)} is immediate to check. Concerning the
concatenation axiom, let $\chi_1$ and $\chi_2$ be two solutions to
Problem~\ref{p:2} on $(0,+\infty)$, satisfying $\chi_1(\tau) =
\chi_2 (0)$ for some $\tau \geq 0$, and let the functions $w_1$ and
$w_2$ be such that,  for $i=1,2$,  the pairs $(\chi_i,w_i)$ satisfy
equations~\eqref{1-var-better}--\eqref{2-var-better}, with
regularities~\eqref{reg-chi-bis} and~\eqref{reg-w}. Then, one easily
sees that the concatenations (cf.~\eqref{def:concaten}) $\tilde
\chi$ and $\tilde w$ of $\chi_1, \, \chi_2$ and $w_1,\, w_2$,
respectively, satisfy
equations~\eqref{1-var-better}--\eqref{2-var-better},  and still
enjoy regularities~\eqref{reg-chi-bis} and~\eqref{reg-w}, respectively (the
fact that $\chi_1(\tau) = \chi_2 (0)$ is crucial for the
time-regularity of $\tilde{\chi}$).

To prove that all solutions $\chi \in \sfl$ are continuous w.r.t.
the phase space topology~\eqref{e:pspace}, let us fix $\{t_n\}, t_0$
in $[0,+\infty)$, and show that
\begin{equation}
\label{e:continuity}
 t_n \to t_0 \ \Rightarrow \ \left( \| \chi(t_n)
-\chi(t_0)\|_{V} + \left\|\widehat{\phi}(\chi(t_n))-
\widehat{\phi}(\chi(t_0)) \right\|_{L^1(\Omega)} \right)\to 0\quad
\text{as $n \to +\infty$}\,.
\end{equation}
Indeed, thanks to regularity~\eqref{reg-chi-bis}, for all $T>0$, the
function $\chi:[0,T] \to V$ is continuous w.r.t. the weak
$V$-topology, hence
\begin{equation}
\label{e:weakV} \chi(t_n) \weakto \chi(t_0) \qquad \text{in $V$.}
\end{equation}
Therefore, by Lemma~\ref{l:5.1}, we have
\[
\liminf_{n\to +\infty} \mathcal{E}(\chi(t_n)) \geq
\mathcal{E}(\chi(t_0))\,.
\]
Combining this inequality with
  the continuity of the map $t \in [0,T] \mapsto
\mathcal{E}(\chi(t))$, one concludes that
\begin{subequations}
\label{e:converg}
\begin{align}
& \label{e:c1}
 \lim_{n \to +\infty} \int_{\Omega} |\nabla
\chi(t_n)|^2 = \int_{\Omega} |\nabla \chi(t_0)|^2,
\\
& \label{e:c2}
 \lim_{n \to +\infty}
\int_{\Omega} \widehat{\phi}(\chi(t_n))= \int_{\Omega}
\widehat{\phi}(\chi (t_0))\,.
\end{align}
\end{subequations}
Clearly, \eqref{e:weakV}, combined with \eqref{e:c1}, yields that
$\chi(t_n) \to \chi(t_0)$ in $V$. In order to prove the
additional convergence
\begin{equation}
\label{e:stronger-conv} \|\widehat{\phi}(\chi(t_n))-
\widehat{\phi}(\chi(t_0)) \|_{L^1 (\Omega)} \to 0 \quad \text{as $n
\to +\infty$}\,,
\end{equation}
we note that \eqref{e:continuity} implies, in particular, that
\begin{equation}
\label{e:ci-serve} \widehat{\phi}(\chi(\cdot, t_n)) \to
\widehat{\phi}(\chi(\cdot,t_0)) \qquad \aein\ \Omega\,.
\end{equation}
In view of~\cite[Lemma~4.2]{rocca-schimperna04}, \eqref{e:ci-serve},
combined with \eqref{e:c2} and the fact that $\widehat{\phi}$ takes
non-negative values,  yields~\eqref{e:stronger-conv}.

 The  energy identity~\eqref{e:enid} follows
by multiplying~\eqref{1-var-better} by $w$ (note that the latter
is an admissible test function, thanks to~\eqref{reg-w}),
\eqref{2-var-better} by $\chi_t$, adding the resulting
relations,  taking into account the chain rule~\eqref{e:l5.1} and
formula~\eqref{e:5.2.2}, and integrating in time.

 It remains to
prove the upper-semicontinuity with respect to the initial data. To this
aim, we will exploit~\eqref{e:enid}.  Thus,
 let us fix a sequence of solutions $\{\chi_n \} \subset \sfl$
 and $\chi_0 \in \cx$,
 with
\begin{equation}
\label{conv-zero}
  \dcx(\chi_n(0),\chi_0) \to 0  \ \text{as $n \to +\infty$, so that, in particular, $\ene(\chi_n(0)) \to
  \ene(\chi_0)$.}
  \end{equation}
 Identity~\eqref{e:enid} yields  that there exists a constant $C>0$ such that,
 for all $n \in \N$,
 \begin{equation}
 \label{aprio1}
  \delta\int_0^t \int_{\Omega} |\partial_t \chi_n|^2 + \int_0^t
\int_{\Omega} \alpha'(w_n)|\nabla w_n|^2 + \ene(\chi_n(t)) =
\ene(\chi_n(0)) \leq C \ \ \text{for all $t\geq 0$}.
\end{equation}
Arguing as in Section~\ref{s:3.1}, we obtain
estimates~\eqref{e:stima1}, \eqref{e:stima1-delta},
\eqref{est:5-delta}, \eqref{e:stima3-delta}, \eqref{e:3.19},
and~\eqref{e:3.22}  for the sequence $\{(\chi_n,w_n)\} $, on every
interval $(0,T)$. Therefore, with a diagonalization procedure, we
find a subsequence $\{(\chi_{n_k}, w_{n_k}) \}$ and  functions
$(\chi,w):(0,+\infty) \to \cx \times V$ for
which~\eqref{e:conv-chi-strong-v}--\eqref{e:conv-w-strong-v},
\eqref{step2}, and \eqref{e:for-later-convenience} hold on every
interval $(0,T)$,  for all $T>0$.
 Using all the aforementioned relations, we have
$\chi(0)=\chi_0$ and,  arguing as in Section~\ref{s:3.3}, we
conclude that $\chi \in \sfl$. In order to prove that
\begin{equation}
\label{e:conv-chink} \text{for all $t \geq 0$}, \ \  \left( \|
\chi_{n_k}(t) -\chi(t)\|_{V} + \left\|
\widehat{\phi}(\chi_{n_k}(t))- \widehat{\phi}(\chi(t)) \right\|_{L^1
(\Omega)} \right)\to 0 \ \ \ \text{as $k \to +\infty$,}
\end{equation}
we first obtain some \emph{enhanced} convergence for
the sequence $\{ w_{n_k}\}$. To this aim, we note that, for every
$T>0$, there holds
\[
\begin{aligned}
\mathsf{c}_\alpha \limsup_{k \to +\infty} \int_0^T \int_{\Omega}
|w_{n_k}|^{2p+2} & \leq \limsup_{k \to +\infty}   \int_0^T
\int_{\Omega}  \alpha(w_{n_k}) w_{n_k} - \liminf_{k \to +\infty}
\int_0^T \int_{\Omega} \Psi(w_{n_k}) \\ & \leq  \int_0^T
\int_{\Omega} \alpha(w) w -  \int_0^T \int_{\Omega} \Psi(w)  =
\mathsf{c}_\alpha \int_0^T \int_{\Omega} |w|^{2p+2}\,.
\end{aligned}
\]
Indeed, the first inequality follows from~\eqref{e:hyp-add-alpha},
the second one from the second convergence
in~\eqref{e:for-later-convenience}, and
from~\eqref{e:conv-w-strong-v}, together with the convexity of $\Psi$
(thanks to  Ioffe's Theorem~\cite{ioffe77}), and the third one
from~\eqref{e:hyp-add-alpha} again. Taking into account the fact that
\[
\liminf_{k \to +\infty} \int_0^T \int_{\Omega} |w_{n_k}|^{2p+2}  \geq
\int_0^T \int_{\Omega} |w|^{2p+2},
\]
due to~\eqref{e:conv-w-strong-v}, we have
\[
w_{n_k} \to w \qquad \text{in $L^{2p+2}(0,T; L^{2p+2} (\Omega))$ \ \
for all $T>0$}
\]
and, thus, finally,
\begin{equation}
\label{strong-w} w_{n_k} \to w \qquad \text{in measure in $\Omega
\times (0,T)$ for all $T>0$.}
\end{equation}
 As a consequence, for all $t \geq 0$,
\begin{equation}
\label{e:ioffe-again} \liminf_{k \to +\infty} \int_0^t \int_{\Omega}
\alpha'(w_{n_k}) |\nabla w_{n_k}|^2 \geq \int_0^t \int_{\Omega}
\alpha'(w) |\nabla w|^2\,,
\end{equation}
thanks to the convergence in measure~\eqref{strong-w}, the weak
 convergence~\eqref{e:conv-w-strong-v} for $\{ \nabla w_{n_k}\}$ in
 $L^2 (0,T; H)$ for all $T>0$, and again  Ioffe's Theorem~\ref{th-ioffe}.
 Hence, passing to the limit in  the energy identity~\eqref{aprio1} (written for the functions
 $(\chi_{n_k},w_{n_k})$),  we infer, for all $t \geq 0$,
\begin{equation}
\label{e:elem-arg}
\begin{aligned}
 \delta\int_0^t \int_{\Omega} |\partial_t \chi|^2  & + \int_0^t
\int_{\Omega} \alpha'(w)|\nabla w|^2 + \ene(\chi(t)) \\ & \leq
\liminf_{k \to +\infty} \left(  \delta\int_0^t \int_{\Omega}
|\partial_t \chi_{n_k}|^2 + \int_0^t \int_{\Omega}
\alpha'(w_{n_k})|\nabla w_{n_k}|^2 + \ene(\chi_{n_k}(t))  \right)
\\ & \leq
\limsup_{k \to +\infty} \left(  \delta\int_0^t \int_{\Omega}
|\partial_t \chi_{n_k}|^2 + \int_0^t \int_{\Omega}
\alpha'(w_{n_k})|\nabla w_{n_k}|^2 + \ene(\chi_{n_k}(t))  \right)
\\
&  = \lim_{k \to +\infty} \ene(\chi_{n_k}(0))= \ene(\chi_0) =
\delta\int_0^t \int_{\Omega} |\partial_t \chi|^2 + \int_0^t
\int_{\Omega} \alpha'(w)|\nabla w|^2 + \ene(\chi(t))\,,
\end{aligned}
\end{equation}
where the first inequality follows
from~\eqref{e:conv-chi-strong-v}--\eqref{e:conv-chi-weak-v},
\eqref{e:ioffe-again}, and the fact that $\ene$ is
lower-semicontinuous w.r.t. the $H$-topology, the  third one
from~\eqref{aprio1}, the fourth one from~\eqref{conv-zero}, and the
last equality from  the \emph{energy identity}~\eqref{e:enid}
satisfied by all solutions in $\mathcal{S}$. With an elementary
argument, we deduce from~\eqref{e:elem-arg} that, for all $t>0$,
\[
\int_0^t \int_{\Omega} |\partial_t \chi_{n_k}|^2 \to \int_0^t
\int_{\Omega} |\partial_t \chi|^2, \quad \text{whence} \quad
\chi_{n_k} \to \chi \ \ \text{in $H^1(0,t;H)$,}
\]
as well as
\[
\ene(\chi_{n_k}(t))  \to \ene(\chi(t)).
\]
 Arguing in the same way as
 throughout~\eqref{e:weakV}--\eqref{e:ci-serve} and again
 invoking~\cite[Lemma~4.2]{rocca-schimperna04}, we
 obtain~\eqref{e:conv-chink}.
  This concludes the
proof.
\fin 
\subsection{Proof of Theorem~\ref{th:4}}
\label{ss:5.2}
\paragraph{Eventual boundedness.}
In order to check that
 $\sfl$ is eventually bounded, we fix  a ball  $B(0,R)$ centered at $0$
   of radius $R$ in $\cx$, some initial datum
$\chi_0 \in B_\cx(0,R)$, namely
 satisfying (recall that we can assume that $\widehat{\phi}$ is a positive function)
\begin{equation}
  \label{e:in-a-ball}
   \|\chi_0\|_V+\int_{\Omega}\widehat{\phi}(\chi_0) \leq
 R,
\end{equation}
  and consider a generic trajectory
      $\chi \in \sfl$
   starting from $\chi_0$.
 Recalling the energy identity~\eqref{e:enid}, we
 find, for all $t \geq 0$,
\begin{equation}
\label{e:eb1} \int_{\Omega}\widehat{\phi}(\chi(t)) \leq
\ene(\chi(t)) \leq \ene(\chi_0) \leq R, \qquad \int_{\Omega}|\nabla
\chi(t)|^2 \leq 2R\,.
\end{equation}
 Now,
taking into account the fact that $m(\chi(t)) = m(\chi_0)$
 for all $t \geq 0$ (cf. \eqref{e:constant-m-value}), we deduce
 from \eqref{e:eb1} a bound for $\| \chi \|_{L^\infty
 (0,+\infty;V)}$.
  Hence,
 there
exists $R'>0$ such that $\dcx(\chi(t),0) \leq R'$ for all $t \geq
0$. Since $\chi_0$ is arbitrary, we conclude that
 the evolution of the ball $B_\cx(0,R)$ is
contained in the ball $B_\cx(0,R')$.
\paragraph{Compactness.}
In order to  verify that $\sfl$ is compact, we  consider  a sequence
$\{\chi_n\}\subset\sfl$ such that $\{\chi_n(0)\}$ is bounded in
$\cx$. We write the energy identity~\eqref{aprio1} and, as in the
proof of Proposition~\ref{prop:2.1}, deduce that there exist a
 subsequence
$\{(\chi_{n_k}, w_{n_k}) \}$ and  functions $(\chi,w):(0,+\infty)
\to \cx \times V$ for which
convergences~\eqref{e:conv-chi-strong-v}--\eqref{e:conv-w-strong-v},
\eqref{step2}, and \eqref{e:for-later-convenience} hold on every
interval $(0,T)$  for all $T>0$. However, we cannot  prove that
\begin{equation}
\label{e:conv-chink-comp}   \left( \| \chi_{n_k}(t) -\chi(t)\|_{V} +
\left\|\widehat{\phi}(\chi_{n_k}(t))- \widehat{\phi}(\chi(t))
\right\|_{L^1(\Omega)} \right)\to 0 \ \ \ \text{for all $t>0$},
\end{equation}
arguing in the same way
 as throughout~\eqref{e:conv-chink}--\eqref{e:elem-arg}, for, in
this case, we do not have the convergence of the initial
energies $\ene(\chi_{n_k}(0))$ at our disposal. Then, we rely on the following
procedure (see also \cite{rossi-segatti-stefanelli08, segatti06} for
the use  of an analogous argument).

First, we apply  Helly's compactness principle (with respect to
the~pointwise convergence) for monotone functions to the functions
$t \mapsto \ene(\chi_{n_k}(t))$, which are non-increasing in view of
the energy identity~\eqref{aprio1}. Thus, up to a (not relabeled)
subsequence, there exists a non-increasing function $\mathscr{E}:
[0,+\infty) \to \R$ such that
\begin{equation}
\label{helly} \mathscr{E}(t):= \lim_{k \to +\infty}
\ene(\chi_{n_k}(t)) \qquad \text{for all $t \geq 0$}.
\end{equation}
By the lower-semicontinuity of $\ene$ (w.r.t. the $H$-topology), we find

\begin{equation}
\label{helly-ineq} \ene(\chi(t)) \leq \mathscr{E}(t) \qquad
\text{for all $t \geq 0$.}
\end{equation}
On the other hand, \eqref{e:conv-chi-strong-v} ensures that, up to a
further extraction, for almost all $s \in (0,t)$,
\begin{equation}
\label{e:enhanced-conv} \chi_{n_k}(s) \to \chi(s) \ \  \text{in
$H^{2-\eps}(\Omega)$ for all $\eps>0$, whence} \ \ \chi_{n_k}(s) \to
\chi(s) \ \ \text{in $H^1 (\Omega) \cap L^\infty(\Omega) $.}
\end{equation} Thus, in particular,
\begin{equation}
\label{e:pointwise} \widehat{\phi}(\chi_{n_k}(\cdot,s)) \to
\widehat{\phi}(\chi(\cdot,s)) \qquad \aein \ \Omega\,.
\end{equation} Moreover, for every $\mathcal{O} \subset \Omega$, there
holds
\begin{equation}
\label{e:unifinte}
\begin{aligned}
 \int_{\mathcal{O}}
|\widehat{\phi}(\chi_{n_k}(s))|  & \leq
|\mathcal{O}||\widehat{\phi}(0)| +\frac{C_{\phi,2}}2
\int_{\mathcal{O}} |\chi_{n_k}(s)|^2 + \int_{\mathcal{O}}
|{\phi}(\chi_{n_k}(s))| |\chi_{n_k}(s)| \\ &  \leq C
\left(|\mathcal{O}|+
\int_{\mathcal{O}}|{\phi}(\chi_{n_k}(s))|\right)\,,
\end{aligned}
\end{equation} where the first inequality follows
from~\eqref{e:phi-add} and the second one
from~\eqref{e:enhanced-conv}. Notice that the right-hand side
of~\eqref{e:unifinte} tends to zero as $|\mathcal{O}| \to 0$, since
the sequence $\{ {\phi}(\chi_{n_k}(s)) \}$ is uniformly integrable
in $L^1 (\Omega)$ thanks to~\eqref{step2}. Hence, \eqref{e:unifinte}
yields that $\{ \widehat{\phi}(\chi_{n_k}(s)) \}$ is itself
uniformly integrable in $L^1 (\Omega)$. Combining this
with~\eqref{e:pointwise}, in view of Theorem~\ref{t:ds} we conclude
that $\widehat{\phi}(\chi_{n_k}(s)) \to \widehat{\phi}(\chi(s))$ in
$L^1 (\Omega)$. Finally, we have shown that
 there exists a negligible set $\mathscr{N} \subset (0,+\infty)$ such
that
\begin{equation}
\label{e:convene} \mathscr{E}(s) = \lim_{k \to +\infty}
\ene(\chi_{n_k}(s)) = \ene(\chi(s)) \qquad \forae\, s \in
(0,+\infty) \setminus \mathscr{N}\,.
\end{equation}

We are now in a position to carry out the argument
for~\eqref{e:conv-chink-comp} (which bypasses the lack of
convergence of the initial data in the phase
space~\eqref{e:pspace}), using the fact that the energy
identity~\eqref{aprio1} holds for all $t>0$. Indeed, for every fixed
$t>0$ and for all $s \in (0,t) \setminus \mathscr{N}$, we can
 pass to the limit in the energy identity~\eqref{aprio1}, written
for the sequence $(\chi_{n_k}, w_{n_k})$ on the interval $(s,t)$.
Note  indeed that
convergences~\eqref{e:conv-chi-strong-v}--\eqref{e:conv-w-strong-v}
and \eqref{step2} for $(\chi_{n_k}, w_{n_k})$   hold on $(s,t)$.
 Proceeding as above, we then deduce once more
that
\[
\lim_{k \to +\infty} \int_s^t \int_{\Omega} \alpha(w_{n_k}) w_{n_k} =
\int_s^t \int_{\Omega} \alpha(w) w\,,
\]
whence $w_{n_k} \to w$ in $L^{2p+2}(s,t;L^{2p+2}(\Omega))$.
 Therefore,
 repeating the very
same passages  as in~\eqref{e:elem-arg} and  relying
on~\eqref{e:convene}, we find
\[
\begin{aligned}
 \delta\int_s^t \int_{\Omega} |\partial_t \chi|^2  & + \int_s^t
\int_{\Omega} \alpha'(w)|\nabla w|^2 + \ene(\chi(t)) \\ & = \lim_{k
\to +\infty} \left(\delta\int_s^t \int_{\Omega} |\partial_t
\chi_{n_k}|^2 + \int_s^t \int_{\Omega} \alpha'(w_{n_k})|\nabla
w_{n_k}|^2 + \ene(\chi_{n_k}(t))  \right)\,,
\end{aligned}
\]
which gives
\[
\mathscr{E}(t) = \lim_{k \to +\infty} \ene(\chi_{n_k}(t)) =
\ene(\chi(t)) \quad\text{for all $t>0$},
\]
and, finally, \eqref{e:conv-chink-comp}.

\paragraph{Lyapunov function and rest points.} We now verify that
$\ene$ acts as a Lyapunov functional for
$\sfl$. Actually, $\ene$ clearly is continuous  on $\cx$ and
decreasing along all solutions, thanks to  the energy
identity~\eqref{e:enid}. Furthermore, assume that, along some $\chi \in
\sfl$, the map $t \in [0,+\infty) \mapsto \ene(\chi(t))$ is constant.
Then, in view of~\eqref{e:enid},
 we find $\nabla w \equiv 0$ and
 $\chi_t \equiv 0$ a.e. in
$(0,+\infty)$, so that $\chi(t) \equiv \chi(0) $  for all $t \in
[0,+\infty)$. Analogously, we immediately find that $\bar{\chi} \in
\cx$ is a rest point for $\sfl$ if  and only if it satisfies the
stationary system
\begin{subequations}
\label{e:sub}
\begin{align}
& \label{e:sub1} A (\alpha(\bar{w})) =0 \quad \aein  \ \Omega\,,
\\
& \label{e:sub2} A \bar{\chi}+ \phi(\bar{\chi}) =\bar{w} \quad \aein
\
 \Omega\,.
\end{align}
\end{subequations}
\paragraph{Conclusion of the proof.} We apply
Theorem~\ref{thm:ball1} and
Remark~\ref{rem:restriction_to_invariant_set} with the choice
$\mathcal{D}:=\mathcal{D}_{m_0} $ for some $m_0>0$ (cf.~\eqref{e:fixed-mean-value}). Thanks
to~\eqref{e:constant-m-value} (recall the second a priori estimate
in Section~\ref{s:3.1}),  for all $\chi_0 \in \mathcal{D}_{m_0} $,
every solution starting from the initial datum $\chi_0$ remains in
$\mathcal{D}_{m_0} $,  so that the first condition in~\eqref{e:saab}
is satisfied. To check the second one, we fix some $\bar{\chi} \in
\rest$ with $|m(\bar{\chi})| \leq m_0 $. It follows
from~\eqref{e:sub1} and~\eqref{e:hyp1} that $\nabla \bar{w} \equiv
0$, so that $\bar{w}$ is constant in $\Omega$.
 Hence, we test~\eqref{e:sub2}  by $\bar{\chi} - m(\bar{\chi})$.
Since $\bar{w}= m(\bar{w})$, we infer  that
\begin{equation}
\label{est:rest-points}
 \| \nabla \bar{\chi} \|_H^2 + \int_{\Omega} \phi(\bar{\chi})
 (\bar{\chi}-m(\bar{\chi})) \leq 0\,.
\end{equation}
On the other hand, \eqref{lim-infty-phi} ensures that
estimate~\eqref{e:2.2.14} holds, so that there exist constants
$\mathcal{K}_{m_0}, \,\mathcal{K}_{m_0}^1>0$, only depending on
$m_0$, such that
\begin{equation}
\label{est-aggiunta} \int_{\Omega}|\phi(\bar{\chi})| \leq
\mathcal{K}_{m_0} \int_{\Omega} \phi(\bar{\chi})
 (\bar{\chi}-m(\bar{\chi})) + \mathcal{K}_{m_0}^1\,.
\end{equation}
Collecting~\eqref{est:rest-points} and~\eqref{est-aggiunta}, we
deduce that
\[
\| \nabla \bar{\chi} \|_H^2 +
\frac1{\mathcal{K}_{m_0}}\int_{\Omega}|\phi(\bar{\chi})|  \leq
\frac{\mathcal{K}_{m_0}^1}{\mathcal{K}_{m_0}}\,,
\]
whence, in particular,
\[
|m(\bar{w})|=|m(\phi(\bar{\chi}))| \leq
\frac{\mathcal{K}_{m_0}^1}{|\Omega|}.
\]
Taking into account the fact that $\nabla \bar{w}=0$ (so that $\bar{w}$ is a
constant) and that $|m(\bar{\chi})| \leq m_0 $, we conclude that
\begin{equation}
\label{e:est-rest} \exists\,\mathcal{K}_{m_0}^{2}>0\,: \ \ \forall\,
\bar{\chi} \in \rest\cap\mathcal{D}_{m_0}\ \ \| \bar{\chi} \|_{V} +
| \bar{w} | \leq \mathcal{K}_{m_0}^2\,.
\end{equation}
Thus, a comparison in~\eqref{e:sub2} and the standard elliptic
regularity estimate (cf.~also the calculations developed
throughout~\eqref{e:more-convenient_form}--\eqref{e:stima3-bis}),
yield
\begin{equation}
\label{e:later}
 \exists\,\mathcal{K}_{m_0}^{3}>0\,:  \ \ \forall\,
\bar{\chi} \in \rest\cap\mathcal{D}_{m_0} \ \ \| {\phi}(\bar{\chi})
\|_{L^6 (\Omega)}+ \| \bar{\chi}\|_{W^{2,6}(\Omega)} \leq
\mathcal{K}_{m_0}^3\,,
\end{equation}
whence, in particular, an estimate for $\bar{\chi}$ in $L^\infty
(\Omega)$. Then, using~\eqref{e:phi-add},  we readily infer that
\begin{equation}
\label{e:est-rest2} \exists\,\mathcal{K}_{m_0}^{4}>0\,: \ \
\forall\, \bar{\chi} \in \rest\cap\mathcal{D}_{m_0} \ \ \|
\widehat{\phi}(\bar{\chi}) \|_{L^6 (\Omega)}  \leq
\mathcal{K}_{m_0}^4\,.
\end{equation}
Finally, \eqref{e:est-rest} and \eqref{e:est-rest2} yield that
$\rest \cap\mathcal{D}_{m_0} $ is bounded in the phase space $\cx$,
and the existence of the global attractor follows from
Theorem~\ref{thm:ball1}.

In fact,  with the same calculations   as in the above lines, joint
with a boot-strap argument, one easily proves that
\begin{equation}
\label{rest}
 \forall\, p \in [1,+\infty)\,: \ \ \exists\,C_p>0 \ \
\bar{\chi} \in \rest\cap\mathcal{D}_{m_0} \quad \|
\bar{\chi}\|_{W^{2,p}(\Omega)} + \|
\widehat{\phi}(\bar{\chi})\|_{L^p(\Omega)} \leq C_p.
\end{equation}
  Then,
estimate~\eqref{e:enhanced-regularity}  is a straightforward
consequence of~\eqref{e:additional} and~\eqref{rest}.
 \fin

\label{s:6}

\section{Proof of Theorems \ref{th:3} and \ref{th:3.2}}
\label{ss:6.1}  \paragraph{Proof of Theorem \ref{th:3}.} Within this
proof,
 we denote
by $c_\delta$ a positive constant depending on $\delta>0$ and on
quantities~\eqref{e:only-depe}.   Referring to the notation of the
statement of Theorem~\ref{th:3}, let us set $\uchio:=
\chi_{0}^{1}-\chi_{0}^{2}$, $ \uchi:=\chi_1 -\chi_2 $, and $\uuu:=
w_1-w_2$. The pair $ (\uchi,\uuu) $ obviously satisfies
  \begin{eqnarray}
  &&
   \uchi_t  + A(\alpha (w_1))-  A(\alpha (w_2)) =0
 \quad  \aein \ \Omega \times (0,T),
\label{unodifb}\\
&&
 \label{duedifb}
\delta \uchi_t +A \uchi + \chi_{1}^3-\chi_{2}^3  -\uchi = \uuu \quad
\aein \ \Omega \times (0,T).
\end{eqnarray}
Following the proof of~\cite[Prop.~2.1]{rossi05},
 we test \eqref{unodifb} by
$ \mathcal{N}\left(\uuu - \mo (\uuu) \right)$, \eqref{duedifb} by  $
\mathcal{N}( \uchi_t)+ \uchi$,   add the resulting equations, and
integrate over $(0,t)$, $ t \in (0,T)$. We refer to the proof
of~\cite[Prop.~2.1]{rossi05} for all the detailed computations,
leading to (cf.~\cite[(3.51)]{rossi05})
\begin{equation}
\label{basic-cont-dep}
 \begin{aligned}
\int_0^t \| \uuu \|_{H}^2    + \delta \int_0^t \|\mn (\uchi_t)
\va^2+ \delta \|\uchi(t)  \|_{H}^2
 + \int_0^t \|
\nabla \uchi\|_{H}^2
   \leq C\left(\|\uchio\|_H^2  + \int_0^t \|\uchi\|_H^{2} \right).
\end{aligned}
\end{equation}
 An easy
application of Gronwall's lemma to the function $t \mapsto
\|\uchi(t)\|_H^{2} $ entails
\begin{equation}
\label{eq:stima-cont-dep-h} \| \uchi \|_{C^0 ([0,t];H) \cap L^2
(0,t;V)}+ \| \uchi_t\|_{L^2 (0,t;V')}+
 \| \uuu \|_{L^2 (0,t;H)}\leq c_\delta  \| \uchio\|_H.
\end{equation}
Furthermore,  exploiting~\eqref{e:hyp7-bis} and the
above~\eqref{eq:stima-cont-dep-h},
 it follows from the H\"older inequality that
\begin{equation}
\label{b}
\begin{aligned}
&\| \phi(\chi_1) -\phi(\chi_2) \|_{L^2 (0,t;H)}^2 \\ & \leq C
\int_0^t \int_\Omega |\uchi|^2 \left(\chi_1^2 +\chi_2^2  +1\right)^2
\\
&\leq C \int_0^t \left(\| \chi_1  \|_{L^6 (\Omega)}^4 + \| \chi_2
\|_{L^6 (\Omega)}^4 \right) \| \uchi \|_{L^6 (\Omega)}^2 + C
\int_0^t \int_\Omega |\uchi|^2
\\
&\leq C \left(\| \chi_1 \|_{L^{\infty}(0,T;L^{6}(\Omega))}^4 + \|
\chi_2 \|_{L^{\infty}(0,T;L^{6}(\Omega))}^4 +1\right) \| \uchi
\|_{L^2 (0,t;V)}^2 \leq c_\delta \| \uchio \|_H^2.
\end{aligned}
\end{equation}
Next, we test~\eqref{duedifb}
 by $\uchi_t $ and integrate in time to obtain
 \begin{equation}
 \label{e:6.5}
\begin{aligned}
\frac\delta2\int_0^t \| \uchi_t  \|_H^2  + \frac12 \|
\nabla(\uchi(t)) \|_H^2 \leq \frac12 \|\nabla\uchio \|_{H}^2
+c_\delta \left(\int_0^t \| \uuu \|_{H}^2 +\int_0^t \|\phi(\chi_1)
-\phi(\chi_2)\|_H^2 + \int_0^t \| \uchi\|_{H}^2 \right)\,.
\end{aligned}
 \end{equation}
 In view of~\eqref{eq:stima-cont-dep-h}--\eqref{e:6.5},  we
readily infer
 the continuous dependence estimate~\eqref{contdepV}
for $\uchi$ in $C^0 ([0,t];V) \cap H^1 (0,t;H)$. Then, the estimate
for $\| \uchi \|_{L^2 (0,t;Z)}$ follows from
\eqref{eq:stima-cont-dep-h}--\eqref{e:6.5}   by a comparison
argument. \fin 
\paragraph{Proof of Theorem~\ref{th:3.2}.} Referring to the notation
of the proof of Theorem~\ref{th:3}, we again test \eqref{unodifb} by
$ \mathcal{N}\left(\uuu - \mo (\uuu) \right)$, \eqref{duedifb} by  $
\mathcal{N}( \uchi_t)+ \uchi$,   add the resulting equations, and
integrate over $(0,t)$, $ t \in (0,T)$. Developing the same
calculations as in the above lines, we  note that the chain of
inequalities \eqref{b} is now trivial, since under the present
assumptions the functions  $\chi_1$ and $\chi_2$ are estimated in
$L^\infty (0,T;Z)$ (see Proposition~\ref{prop:regularized}). On the
other hand, the following term:
\begin{equation*}
I :=\int^t_0 \int_\Omega m(\uuu)\left(\alpha(w_1) -\alpha(w_2)\right),
\end{equation*}
which was easily estimated in the proof of Theorem~\ref{th:3}, now
needs to be carefully handled
 because of the (at most) quadratic controlled growth of
$\alpha^\prime$.  Indeed, observe that
\begin{equation*}
\begin{aligned}
\vert I\vert &\le \int_0^t \Vert m(\uuu)\Vert_{L^\infty(\Omega)}
\Vert\alpha(w_1) -\alpha(w_2)\Vert_{L^1(\Omega)}
\\
&\le C\int_0^t \left(\Vert m(\uuu)\Vert_{L^1(\Omega)}\int_\Omega
(1+\vert w_1\vert^{2p} +\vert w_2\vert^{2p})\uuu\right)
\\
&\le C \int_0^t \left(\Vert \phi(\chi_1)
-\phi(\chi_2)\Vert_{L^1(\Omega)} (1+\Vert
w_1\Vert^{2p}_{L^\infty(\Omega)} +\Vert
w_2\Vert^{2p}_{L^\infty(\Omega)})\Vert \uuu\Vert_{L^1(\Omega)}
\right)
\\
&\le C \int_0^t \left(\Vert\uchi \Vert_{L^1(\Omega)} (1+\Vert
w_1\Vert^{2p}_{L^\infty(\Omega)} +\Vert
w_2\Vert^{2p}_{L^\infty(\Omega)})\Vert \uuu\Vert_{L^1(\Omega)}
\right)
\\
&\le \varrho \int_0^t\Vert \uuu\Vert^2_{H} + C_\varrho \int_0^t
(1+\Vert w_1\Vert^{4p}_{L^\infty(\Omega)} +\Vert
w_2\Vert^{4p}_{L^\infty(\Omega)}) \Vert\uchi \Vert^2_{H}
\end{aligned}
\end{equation*}
for some $\varrho \in (0,1)$ and $C_{\varrho}>0$.
 This modification gives, in place of
\eqref{basic-cont-dep},
$$
 \begin{aligned}
&(1-\varrho)\int_0^t \| \uuu \|_{H}^2    + \delta \int_0^t \|\mn
(\uchi_t) \va^2+ \delta \|\uchi(t)  \|_{H}^2
 + \int_0^t \|
\nabla \uchi\|_{H}^2\\
   &\leq C\left(\|\uchio\|_H^2  +
   \int_0^t (1+\Vert w_1\Vert^{4p}_{L^\infty(\Omega)}
+\Vert w_2\Vert^{4p}_{L^\infty(\Omega)}) \Vert\uchi
\Vert^2_{H}\right).
\end{aligned}
$$
Thus,  recalling \eqref{reg-w}, we can use Gronwall's lemma to
deduce \eqref{eq:stima-cont-dep-h}. Estimate \eqref{e:6.5} can be
obtained by arguing as in the proof of Theorem~\ref{th:3}, hence the
result.\fin


\appendix
\section{Appendix}
\noindent
We propose the following approximate system for \emph{both}
Problem~\ref{p:1} and Problem~\ref{p:2}:
\begin{align}
  & \label{1-var-better-mudelta}
   \chi_t + A (\alpha_M(w)) =0 \qquad \aein \  \Omega \times (0,T)\,,\\
  &  \label{2-var-better-mudelta}
   \delta \chi_t+ A\chi + \phi_{\app}(\chi) =w \qquad \aein \ \Omega \times (0,T)\,,
\end{align}
depending on  the parameters $\delta,\, M, \, \app >0$, where
\begin{equation}\label{approalfa}
  \alpha_M(r)=\left\{\begin{array}{lll}
   \alpha(-M) + C_1(r-M) &\quext{if }\,r<- M,
  \\
     \alpha(r)& \quext{if }\,|r|\le M,\\
     \alpha(M) + C_1(r-M) &\quext{if }\,r> M,
   \end{array}
   \right.
\end{equation}
 $C_1$ being the same constant as in \eqref{e:hyp1},
 and
\begin{equation}\label{approphi}
  \phi_{\app}(r)=
\left\{
  \begin{array}{lll}
       \phi(r) &\quext{if }\,|\phi(r)|\le\frac1{\app},\\
     \frac1{\app}\sign(r) &\quext{otherwise}.
   \end{array}
   \right.
\end{equation}
It is immediate to check that, for any choice of the approximation
parameters $M$ and $\app$, the functions $\alpha_M$ and
$\phi_{\app}$ are Lipschitz continuous on $\R$  and that
\begin{equation}
\label{e:uniformly-cpt}
\begin{aligned}
&
 \alpha_M \to \alpha \qquad \text{uniformly
on compact subsets of $\RR$ as $M \nearrow +\infty$,}
\\
&
 \phi_\mu \to \phi \qquad \text{uniformly
on compact subsets of $\dom(\phi)$ as $\mu \searrow 0$.}
\end{aligned}
\end{equation}
 Of
course, the Lipschitz constants  of $\alpha_M$ and $\phi_\mu$
explode as $M \nearrow +\infty$ and $\mu \searrow 0$, respectively.
Let us also point out that, by construction,
\begin{equation}
\label{e:coercivity} \alpha_{M}'(r) \geq C_1>0 \qquad \text{for all
$r \in \R$, $M
>0,$}
\end{equation}
which yields that the inverse $\rho_M : \R \to \R$ of $\alpha_M$ is
Lipschitz continuous, with
\begin{equation}
\label{e:impo-conse} |\rho_M(x) -\rho_M(y)| \leq \frac{1}{C_1}|x-y|
\quad \text{for all $x,y \in \R$, $M
>0.$}
\end{equation}
What is more, relying on convergence \eqref{e:uniformly-cpt} of
$\phi_\app$ to $\phi$, one can also check that, for $\mu>0$
sufficiently small (say $0<\mu\leq \mu_*$), \eqref{e:2.2.14} and
\eqref{hyp:3} hold on this approximate level as well, i.e.,
\begin{equation}
\label{e:2.2.14-approx}
\begin{aligned}
 \forall\, m \in \dom(\phi)=(a,b)\ \  \exists\, C_m,\
C_m'>0 \, : \ \ &\forall\, 0<\mu\leq \mu_* \quad  \forall\, r \in
(a-m,b-m) \\ & |\phi_\app(r+m)| \leq C_m \phi_\app(r+m)r +C_m'\,,
\end{aligned}
\end{equation}
as well as
\begin{equation}
\label{hyp:3-appr} \ \ \exists\, C>0\, : \ \ \forall\, 0<\mu\leq
\mu_* \quad  \quad \forall\, r \in (a,b) \quad
|\phi_\app(r)|^{\sigma} \leq C \left( \widehat{\phi_\app}(r)
+1\right)\,,
\end{equation}
with $\sigma \in (0,1)$ as in \eqref{hyp:3}, in particular, complying
with the compatibility condition \eqref{hyp:4}.

It was proved in~\cite[Thm.~2.1]{rossi05} that, for every
$\delta,\,  M, \, \app>0$, there exists a unique pair
$(\chimd,\wmd)$, with
\begin{equation}\label{initial-regularity}
 \begin{aligned}
  & \chimd \in L^2 (0,T;Z) \cap L^\infty (0,T;V) \cap H^1 (0,T;H),\\
  &\wmd \in L^2 (0,T;V),
 \end{aligned}
\end{equation}
solving the Cauchy problem for
system~\eqref{1-var-better-mudelta}--\eqref{2-var-better-mudelta},
supplemented with some initial datum $\chi^0 \in V$.
\paragraph{Problem $\mathbf{P}_{\delta,\app}$.} In what follows, we approximate the initial datum
$\chi_0 \in V$ in~\eqref{hyp:initial-datum} by a sequence
\begin{equation}\label{basta}
 \{\chi_{0,\mu}\} \subset H^4(\Omega) \quad \text{with} \quad \chi_{0,\mu}
  \weakto \chi_0 \ \ \text{in $V$} \ \ \text{and} \ \ \sup_{\mu>0}\|
  \widehat{\phi}_{\app}(\chi_{0,\mu})\|_{L^1(\Omega)}<+\infty
\end{equation}
(for example, we can construct $\{ \chi_{0,\mu}\}$ by
applying (twice) the
elliptic regularization procedure developed in the proof
of~\cite[Prop.~2.6]{bcgg}).

For every $\delta,\,  M, \, \app>0$, we call
$\mathbf{P}_{\delta,M,\app}$ the initial and boundary value problem
obtained by  supplementing  the PDE system
\eqref{1-var-better-mudelta}--\eqref{2-var-better-mudelta} with the
initial condition
\begin{equation}\label{app-init}
  \chi(0)=\chi_{0,\mu} \quad \text{in $H^4(\Omega)$}.
\end{equation}
In the following Section \ref{ss:a.1}, we will  prove some
further regularity of the approximate solutions. In this way, we
will  justify, on the level of the approximate Problem
$\mathbf{P}_{\delta,M,\app}$,   the estimates formally performed in
Section~\ref{s:3.1}. Hence, in Section~\ref{ss:a.2}, we will develop
the rigorous proof of Theorem~\ref{th:1} by relying on the
aforementioned estimates and by passing to the limit in Problem
$\mathbf{P}_{\delta,M, \app}$, first as $\delta\searrow 0$ for
$M,\,\app>0$ fixed, then as $M \nearrow +\infty$ for $\app>0$ fixed,
and, finally, as $\app \searrow 0$.

 Furthermore, it would be possible to give
a rigorous proof of Theorem~\ref{th:2} by passing to the limit in
Problem $\mathbf{P}_{\delta,M,\app}$ first as $M \nearrow +\infty$ for
$\app>0$ fixed, and then as $\app \searrow 0$. However, we are not
going to enter into the details of the latter procedure, which
follows the very same lines as the one for Theorem~\ref{th:1}.

\begin{notation}
\upshape In what follows, we denote by $C_{\delta,M,\app}$
various constants (which can differ from occurrence to occurrence,
even in the same line),  depending on the parameters
$\delta$, $M$, and $\app$, and such that $C_{\delta,M,\app} \nearrow
+\infty$ as either $\delta \searrow 0$, or $M \nearrow +\infty$, or
$\app \searrow 0$. The symbols  $C_{\delta,\mu}$,  $C_{M,\mu}$, and
$C_{\mu}$ have an analogous meaning.
\end{notation}

\subsection{Enhanced regularity estimates on the approximate problem}
\label{ss:a.1}

\paragraph{First estimate.}
We note that $\wmd \in L^2 (0,T;V)$ and that, since $\phi_\app$
is a Lipschitz continuous function, $\chimd \in L^\infty(0,T;V)$
(cf.~\eqref{initial-regularity}) implies $\phi_\app (\chimd)  \in
L^\infty(0,T;V)$.
 Thus, by comparison in~\eqref{2-var-better-mudelta},
 we have   $\delta \partial_t \chimd +
 A\chimd \in L^2 (0,T;V)$. Hence, testing
\eqref{2-var-better-mudelta} by $A(\partial_t \chimd)$ and using
the fact that $\chimd(0)= \chi_{0,\app} \in Z$, we deduce the estimate
\begin{equation}\label{f1}
  \|\nabla \partial_t \chimd\|_{L^2 (0,T;H)}
   + \| A \chimd\|_{L^\infty (0,T;H)}
  \leq C_{\delta,\app},
\end{equation}
whence
\begin{equation}\label{f1-bis}
  \chimd\in L^\infty (0,T;Z) \cap H^1 (0,T;V).
\end{equation}
\paragraph{Second estimate.}
Since $\alpha_M$ is Lipschitz continuous and $w \in L^2(0,T;V)$, we
have
  $\alpha_M(\wmd) \in
L^2(0,T;V)$.
Estimate~\eqref{f1} and a comparison
in~\eqref{1-var-better-mudelta} yield a bound for
$A(\alpha_M(\wmd))$ in  $L^2 (0,T;V)$, whence
\[
  \alpha_M(w) \in L^2(0,T;H^3(\Omega)) \subset L^2(0,T;W^{1,\infty}(\Omega)).
\]
 Recalling   \eqref{e:impo-conse} and using the fact that $w=\rho_M (\alpha_M(w))$,
   we readily deduce the estimate
\begin{equation}\label{regosig4}
  \|\wmd \|_{L^2(0,T;W^{1,\infty}(\Omega))} \leq C_{\delta,M,\app}.
\end{equation}
%
\paragraph{Third estimate.}
Using a parabolic regularity argument in
\eqref{2-var-better-mudelta} and relying on regularity
\eqref{basta} for the approximate initial datum $\chi_{0,\app}$,
we deduce that
\begin{equation}
\label{cf-later}
  \| \partial_t \chimd \|_{L^2 (0,T;W^{1,3+\epsilon}(\Omega))} + \| A
  \chimd\|_{L^2 (0,T;W^{1,3+\epsilon}(\Omega))} \leq C_{\delta,M,\mu},
\end{equation}
where $\epsilon>0$ is a suitable number.
More precisely, since
$\chi_{0,\mu}\in H^4(\Omega)\subset W^{3,6}(\Omega)$,
the above formula holds for any $\epsilon\in(0,3]$
(cf.~inequality \eqref{hieber-pruss} below for a
justification). Thus, by interpolation, we obtain that
$\nabla \chimd$ belongs to  $ H^{1/2}
(0,T; W^{1,3+\epsilon}(\Omega))$ and,
recalling the continuous embedding
$W^{1,3+\epsilon}(\Omega) \subset L^\infty (\Omega)$,
we conclude that
\begin{equation}\label{regosig5}
  \| \nabla\chimd\|_{L^\infty (0,T;L^\infty(\Omega))} \leq
   C_{\delta,M,\app}.
\end{equation}

\paragraph{Fourth estimate.} Notice that,
for almost all $t \in (0,T)$, the function
 $\nabla(|\wmd(t)|^p \wmd(t))= (p+1)|\wmd(t)|^p  \nabla \wmd(t)$ belongs to
 $L^2(\Omega)$, thanks to~\eqref{initial-regularity}
 and~\eqref{regosig4}. Hence, for a.a. $t \in (0,T)$, we can
test~\eqref{2-var-better-mudelta} by $|\wmd(t)|^p \wmd(t)$, which yields
\begin{equation}
\label{f3}
\begin{aligned}
\int_{\Omega} &  |\wmd(t)|^{p+2} \\ & = \int_{\Omega} \nabla
\chimd(t) \cdot \nabla(|\wmd(t)|^p \wmd(t)) + \int_{\Omega}
\phi_\app(\chimd(t)) |\wmd(t)|^p \wmd(t) + \delta \int_{\Omega}
\partial_t \chimd(t) |\wmd(t)|^p \wmd(t)
\\ &
\begin{aligned}
= (p+1)\int_{\Omega}  |\wmd(t)|^p \nabla \chimd(t) \cdot  \nabla
\wmd(t)  &  + \int_{\Omega} \phi_\app(\chimd(t)) |\wmd(t)|^p
\wmd(t) \\ & - \delta(p+1) \int_{\Omega}
\alpha_M'(\wmd(t))|\wmd(t)|^p | \nabla \wmd(t)|^2,
\end{aligned}
\end{aligned}
\end{equation}
the second equality following from
equation~\eqref{1-var-better-mudelta}. We estimate the second term
on the right-hand side of the above equality by using the bound
for $\phi_\app(\chimd)$ in $L^\infty (0,T; L^\infty (\Omega))$, due
to~\eqref{f1-bis} and the Lipschitz continuity of $\phi_\app$. We
deal with the first integral term as follows:
\begin{equation}
\label{e:f2}
\begin{aligned}
  \left|\int_{\Omega}  |\wmd(t)|^p \nabla \chimd(t) \cdot  \nabla \wmd(t)  \right|
   & \le \big\| |\wmd(t)|^{p/2}\nabla \wmd(t)\big\|_{L^2(\Omega)}
    \big\| |\wmd(t)|^{p/2} \big\|_{L^2(\Omega)} \big\|\nabla \chimd(t)\big\|_{L^\infty}
  \\ &  \le \varrho \io |w(t)|^p |\nabla w(t)|^2
    + C_{\delta,M,\mu} \int_{\Omega}| w(t) |^p
    \end{aligned}
\end{equation}
for some suitable positive  constant $\varrho$, where we have also
used~\eqref{regosig5}.  Now, recalling~\eqref{e:coercivity}, we
estimate the last summand on the right-hand side of~\eqref{f3} by
\[
-\delta(p+1) \int_{\Omega} \alpha_M'(\wmd(t))|\wmd(t)|^p | \nabla
\wmd(t)|^2 \leq -\delta(p+1)C_1 \int_{\Omega} |\wmd(t)|^p |\nabla
\wmd(t)|^2,
\]
and we move the above term to the left-hand side of~\eqref{f3}.
Then, we combine the latter inequality with~\eqref{e:f2}, in which
we choose
 $\varrho=\frac{\delta (p+1)C_1}{4}$. We thus obtain, for a.a.
 $t \in (0,T)$,
\begin{equation}\label{co31}
\int_{\Omega} |\wmd(t)|^{p+2}
   +\frac34\delta(p+1)C_1 \int_{\Omega} |\wmd(t)|^p |\nabla \wmd(t)|^2
   \le C_{\delta,M,\app} \left(\int_{\Omega}|\wmd(t)|^{p+1} + \int_{\Omega}| w(t)
   |^p\right)\,.
\end{equation}
Thus, we finally infer that
\begin{equation}\label{regosig6}
  w \in L^\infty(0,T;L^p(\Omega)) \quad \text{for all $p\in[1,\infty)$,}
\end{equation}
whence, by the Lipschitz continuity of $\alpha_M$,
\begin{equation}\label{regosig7}
  \alpha_M(w) \in L^\infty(0,T;L^p(\Omega)) \quad \text{for all $p\in[1,+\infty)$.}
\end{equation}
\subsection{Rigorous proof of Theorem~\ref{th:1}}
\label{ss:a.2}

Within this section, for all $\delta,\,\app>0$, we
will denote by $\{(\chimud,\wmud)\}_{\delta,M,\app}$
the family of solutions to Problem~$\mathbf{P}_{\delta,M,\app}$.

\medskip

%
%
\noindent \underline{First step}. For  fixed $\app,M>0 $, we pass to
the limit in Problem~$\mathbf{P}_{\delta,\app}$ as $\delta\searrow
0$. We then perform the same calculations as in Section~\ref{s:3.1}
(cf. \eqref{est:1}--\eqref{e:stima1}, \eqref{est:2-deltazero},
\eqref{est:3}--\eqref{e:stima3}). Also relying on
\eqref{e:2.2.14-approx}--\eqref{hyp:3-appr},  we conclude that
\begin{equation}\label{ff1}
 \begin{aligned}
 & \exists\, C>0\,:  \ \ \forall\, \delta,\, M, \, \mu>0 \quad
  \|\chimud\|_{L^\infty (0,T;V)} + \| \wmud\|_{L^2 (0,T;V)}
   + \| \widehat{\phi}_\app (\chimud)\|_{L^\infty (0,T;L^1 (\Omega))} \\
  & \mbox{}~~~~~~~~~~
   + \delta^{1/2} \| \partial_t \chimud \|_{L^2 (0,T;L^2(\Omega))}
   + \| (\alpha_M'(\wmud))^{1/2} \nabla \wmud \|_{L^2 (0,T;L^2(\Omega))}
   \leq C.
 \end{aligned}
\end{equation}
Recalling the definition of  $\phi_\app$ and its  Lipschitz
continuity, we also have
\begin{equation}\label{ff1-bis}
 \begin{aligned}
  \exists\, C_\mu>0\,:  \ \ \forall\, \delta,\, M>0\quad
  \| \phi_\app (\chimud)\|_{L^\infty (0,T;V)\cap L^\infty (0,T;L^\infty
  (\Omega))} \leq C_\mu.
 \end{aligned}
\end{equation}
In the same way, estimate \eqref{ff1} for $\wmud$ and  the Lipschitz
continuity of $\alpha_M$  yield
\begin{equation}\label{est-intermediate}
  \exists\, C_{M}>0\,:  \ \ \forall\, \delta,\, \app>0 \quad
  \|\alpha_M(\wmud)\|_{L^2 (0,T;V)} \leq C_{M}.
\end{equation}
 Next, a comparison in
\eqref{2-var-better} and the maximal parabolic regularity result
from~\cite{hieber-pruss} yield
\begin{equation}
\label{hieber-pruss} c(\delta)\int_0^T \| \partial_t \chimud\|_{L^6
(\Omega)}^2 + \int_0^T \| A \chimud\|_{L^6 (\Omega)}^2  \leq C
\int_0^T \| \ell_{\delta,M,\app}\|_{L^6 (\Omega)}^2,
\end{equation}
for some $c(\delta)$ such that $c(\delta)\to 0$ as $\delta \to 0$, where we have set
\[
 \ell_{\delta,M,\app} = \wmud -\phi_{\mu}(\chimud) -A\chi_{0,\app}.
\]
In view of estimates \eqref{ff1} for $\wmud$,  \eqref{ff1-bis} for
$\phi_\app (\chimud)$ in $L^\infty (0,T;V)$, and \eqref{basta} for
$\{\chi_{0,\app}\}$, we conclude that
\[
\|  \ell_{\delta,M,\app} \|_{L^2 (0,T;L^6(\Omega))} \leq C_\mu.
\]
Therefore, \eqref{hieber-pruss} gives
\begin{equation}
 \label{inghippo}
  \exists\, C_\mu>0\,: \ \ \forall\, \delta,\, M>0 \quad \|\chimud\|_{L^2
  (0,T;W^{2,6}(\Omega))} \leq C_\app.
\end{equation}
On the other hand, estimate \eqref{est-intermediate} and a
comparison in~\eqref{1-var-better-mudelta} imply
\begin{equation}
  \label{est-chit} \exists\, C_{M}>0\,:  \ \ \forall\, \delta, \, \app >0\quad
    \|\partial_t \chimud\|_{L^2 (0,T;V')} \leq C_{M}.
\end{equation}

On behalf of the above estimates and arguing in the very same way as
in Section~\ref{s:3.2}, we see that, for every fixed $M>0$ and
$\mu>0$, there exist a sequence $\delta_k \searrow 0$ (for notational
simplicity, we do not highlight its dependence on the parameters $M$
and $\mu$) and functions $(\chi_{M,\app},w_{M,\app},
\bar{\alpha}_{M,\app})$ such that the sequence
$\{(\chi_{\delta_k,M,\mu},w_{\delta_k,M,\mu}) \}$ converges to
$(\chi_{M,\app},w_{M,\app})$, as $k \to +\infty$,  in the sense
specified by~\eqref{e:conv-chi-strong}--\eqref{e:conv-chi-weak1},
\eqref{e:conv-w-strong}, as well as
\[
 \begin{aligned}
  & \partial_t \chi_{\delta_k,M,\mu} \weakto \partial_t \chi_{M,\app} \quad
   \text{in $L^2 (0,T;V')$,}\\
  & \delta_k^{1/2} \partial_t \chi_{\delta_k,M,\mu} \weakto 0 \quad
   \text{in $L^2(0,T;H)$,}\\
  & \alpha_M(w_{\delta_k,M,\mu}) \weakto   \bar{\alpha}_{M,\app} \quad
  \text{in $L^2 (0,T;V)$.}
\end{aligned}
\]
Next, arguing similarly to
the (formal) proof of Theorem~\ref{th:1}, we
conclude that
\begin{equation}
\label{post}
  \phi_\app (\chi_{\delta_k,M,\mu}) \to \phi_\app (\chi_{M,\app}) \quad
   \text{in $L^2 (0,T;H)$.}
\end{equation}
Finally,  we use \eqref{post}  in the very same way as in
Section~\ref{s:3.2} to infer
\[
\bar{\alpha}_{M,\app} =\alpha_M (w_{M,\app})
\]
and
\[
  \alpha_M(w_{\delta_k,M,\mu}) \to   \alpha_M (w_{M,\app}) \quad
   \text{in $L^2 (0,T;H)$.}
\]
Therefore, we conclude that the pair $(\chi_{M,\app},w_{M,\app})$
is a solution to the PDE system
\begin{align}
  & \label{1-var-better-mu}
   \chi_t + A (\alpha_M(w)) =0 \qquad \aein \  \Omega \times (0,T)\,,\\
 &
 \label{2-var-better-mu} A\chi + \phi_{\app}(\chi)
  = w \qquad \aein \ \Omega \times (0,T)\,,
\end{align}
supplemented with the initial condition~\eqref{app-init}.
\bigskip
\\
\underline{Second step}. We now take the limit $M\nearrow +\infty$ in
(the Cauchy problem for)
\eqref{1-var-better-mu}--\eqref{2-var-better-mu}.
Estimates~\eqref{ff1} and \eqref{ff1-bis} hold for the sequence of
solutions $\{(\chi_{M,\app},w_{M,\app})\}_M$ as well.
 Furthermore,  using a lower-semicontinuity argument, we also deduce
 from \eqref{inghippo} that
\begin{equation}
\label{bor} \|\chi_{M,\app}\|_{L^2 (0,T;W^{2,6}(\Omega))}\leq
C_\mu\quad \text{for all $M>0.$}
\end{equation}

Now, we point out that  \eqref{ff1} entails
\begin{equation}
\label{stima_g} \int_0^T \int_{\Omega} \alpha_M'(w_{M,\app})|\nabla
w_{M,\app} |^2 \leq C \quad \text{for all $M>0$}.
\end{equation}
Let us denote by $\mathcal{T}_M$ the truncation operator at
level $M$ and define
\begin{equation}
\label{def-taum} \tau_M:=\mathcal{T}_M(w_{M,\app} ):= \left\{
\begin{array}{lll}  -M  &\text{if } w_{M,\app} <-M,
\\
  w_{M,\app} &\text{if } |w_{M,\app}| \leq M,
 \\
  M & \text{if } w_{M,\app} > M,
\end{array}
\right. \qquad \text{a.e.}\ t\in \Omega \times (0,T)
\end{equation}
(to simplify, we omit the index $\mu$ in the notation for $\tau_M$).
 For later use, we also introduce $\forae\ t \in (0,T)$ the sets
\begin{equation}
\label{def-om} \begin{cases} \mathcal{A}_M:= \left\{(x,t)\in \Omega
\times (0,T)\, : \ |w_{M,\app}(x,t)| \leq M \right\},
\\
 \mathcal{O}_M:= \left\{(x,t)\in \Omega \times (0,T)\, : \
|w_{M,\app}(x,t)|> M \right\},
\\
 \mathcal{O}_M^t:= \left\{x\in \Omega\, : \ (x,t) \in \mathcal{O}_M
\right\}.
\end{cases}
\end{equation}
From \eqref{stima_g}, we also infer
\[
\int_0^T \int_{\Omega} \alpha'(\tau_M)|\nabla \tau_M |^2 \leq C
\quad \text{for all $M>0$},
\]
whence, in view of \eqref{e:hyp1},
\begin{equation}
\label{stima_h} \| |\tau_M|^p\, \nabla \tau_M \|_{L^2 (0,T;H)} \leq
C
 \quad \text{for all $M>0$}.
\end{equation}
Now, in order to reproduce estimates
\eqref{e:to-be-cited3}--\eqref{e:3.18} in the present approximate
setting, we  test \eqref{2-var-better-mu} by $|\tau_{M}(t)|^p
\tau_{M}(t)$ for a.e. $t \in (0,T)$.
Clearly,
\[
\int_{\Omega} w_{M,\app}(t)|\tau_{M}(t)|^p \tau_{M}(t)
\ge\int_{\Omega} | \tau_{M}(t)|^{p+2},
\]
so that we have (cf. also \eqref{f3})
\begin{equation}\label{stima_j}
 \begin{aligned}
  \int_{\Omega}   | \tau_{M}(t)|^{p+2}
   & \le (p+1)\int_{\Omega} | \tau_{M}(t)|^p \,\nabla  \chi_{M}(t) \cdot  \nabla \tau_{M}(t)
   + \int_{\Omega} \phi_\app (\chi_{M,\app}(t))|\tau_{M}(t)|^p \tau_{M}(t)\\
  & \leq (p+1) \| | \tau_{M}(t) |^p \,\nabla \tau_{M}(t) \|_H
     \| \nabla \chi_{M}(t)\|_H
      + C_\mu \int_{\Omega} | \tau_M(t)|^{p+1} \\
  & \leq C \||\tau_{M}(t)|^p \,\nabla \tau_{M}(t) \|_{H}
   + \frac12 \int_{\Omega} | \tau_M (t)|^{p+2}
   + C_\mu,
 \end{aligned}
\end{equation}
where the second inequality follows from estimate \eqref{ff1-bis} and
the last one from \eqref{ff1} and
Young's inequality. Therefore, combining
\eqref{stima_h} and \eqref{stima_j}, we find an estimate for $\|
\tau_M\|_{L^{p+2}(\Omega)}^{p+2}$ in $L^2 (0,T)$ with some constant
$C_\mu>0$  which is independent of $M>0$. On behalf
of~\eqref{def-taum}--\eqref{def-om}, from the latter bound, we infer
(recall that $|\cdot|$ also denotes the Lebesgue measure)
\begin{equation}
\label{stima_m}
\begin{aligned}
C_\mu \geq \int_{0}^T \left( \int_{\mathcal{O}_M^t} |M|^{p+2}\, \dd
x\right)^2 \, \dd t = M^{2p+4} \int_0^T |\mathcal{O}_M^t|^2 \, \dd t
\geq \frac{ M^{2p+4}}{T}|\mathcal{O}_M|^2  \quad \text{for all
$M>0$},
\end{aligned}
\end{equation} where the last inequality is a direct consequence of Jensen's
inequality. Next, we apply the nonlinear Poincar\'e inequality
\eqref{e:poinc-gen} to $|\tau_M|^p \tau_M$, thus obtaining
(cf.~\eqref{e:poinc-est})
\[
 \||\tau_M|^p \tau_M  \|_{V} \leq K \left( \| \nabla
(|\tau_M|^{p} \tau_M) \|_{H} + \left|
\mo(\tau_M)\right|^{p+1}\right)\,.
\]
In view of \eqref{stima_h} and of the definition of $\tau_M$, we
find an estimate for $|\tau_M|^p \tau_M $ in $L^2 (0,T;V)$, again
with some constant $C_\app$ which is independent of $M>0$. Hence, using the fact that
$V \subset L^6 (\Omega)$ and the growth condition \eqref{e:hyp1} for
$\alpha$, we conclude that
\begin{equation}
\label{stima_l} \| \alpha(\tau_M)\|_{L^{\rho_p}(0,T; L^{\kappa_p}
(\Omega))} \leq C_\mu  \quad \text{for all $M>0$}
\end{equation}
(where the indexes
$\rho_p$ and $\kappa_p$ are as in \eqref{e:index-notation}: in
particular, $1<\rho_p<2$). Therefore, we have
\[
\begin{aligned}
\int_0^T \int_{\Omega} |\alpha_M (w_{M,\app})|^{\rho_p}  & \leq
\iint_{\mathcal{A}_M} |\alpha (\tau_{M})|^{\rho_p} + 2^{\rho_p-1}
\iint_{\mathcal{O}_M}|\alpha(M)|^{\rho_p}
 + 2^{\rho_p-1} C_1^{\rho_p}
\iint_{\mathcal{O}_M}| w_{M,\app} - M|^{\rho_p}\\
&  \leq
 2^{\rho_p-1} \int_0^T \int_{\Omega} |\alpha(\tau_M)|^{\rho_p}+
C\| w_{M,\app}\|_{L^{\rho_p}(0,T; L^{\rho_p}(\Omega))}^{\rho_p} +
 CM^{\rho_p} |\mathcal{O}_M| \\
 & \leq C_\mu + C + C \frac{M^{\rho_p}}{M^{p+2}},
 \end{aligned}
\]
where the first inequality follows from  the very definition
\eqref{approalfa} of $\alpha_M$, the second one from trivial
calculations, and the last one from estimates \eqref{ff1} for
$w_{M,\app}$, \eqref{stima_m} for $|\mathcal{O}_M|$, and
\eqref{stima_l} for $\alpha(\tau_M)$. Note that, since $\rho_p<2$,
we have
 $M^{\rho_p}/ M^{p+2} \to 0 $ as $M \to +\infty$.

Altogether, we find
\begin{equation}
\label{stima_n} \|\alpha_M (w_{M,\app})\|_{L^{\rho_p}(0,T;
L^{\rho_p}(\Omega))} \leq C_\mu \quad \text{for all $M>0$,}
\end{equation}
 which yields, by comparison in \eqref{1-var-better-mu},
\begin{equation}
\label{stima_p} \|
\partial_t \chi_{M,\app} \|_{L^{\rho_p} (0,T;W^{-2,\rho_p}(\Omega))}
 \leq C_\mu \quad \text{for all $M>0$,}
\end{equation}
$W^{-2,\rho_p}(\Omega)$ denoting here the standard negative order
Sobolev space.

Collecting estimates \eqref{ff1}, \eqref{ff1-bis}, \eqref{bor}, and
\eqref{stima_n}--\eqref{stima_p}, we then argue in the same way as
in Section \ref{s:3.2}. Thus,  we conclude that there exist a
subsequence $M_k \nearrow +\infty$  as $k \to +\infty$ (whose
dependence on the index $\mu>0$ is not highlighted) and functions
$(\chi_\app,w_\app)$ fulfilling~\eqref{reg-chi}--\eqref{reg-w0} such
that  the functions $(\chi_{M_k,\app}, w_{M_k,\app})$ converge, as
$k \to +\infty,$ to $(\chi_\app,w_\app)$ in the same sense as
in \eqref{e:conv-chi-strong}-\eqref{e:conv-chi-weak1} and
\eqref{e:conv-w-strong}, while, in place of
\eqref{e:conv-chi-weak2}, we only have
\[
 \dt\chi_{M_k,\app}
  \weakto \dt\chi_{\app} \quad
   \text{in \ $L^{\rho_p} (0,T;W^{-2,\rho_p}(\Omega))$,}
\]
which is, anyway, sufficient for what follows.
Furthermore,
there exists some $\bar{\alpha}_\mu \in L^{\rho_p}(0,T;
L^{\rho_p}(\Omega))$ such that
\[
\alpha_M (w_{M_k,\app}) \weakto \bar{\alpha}_\mu  \quad \text{in
$L^{\rho_p}(0,T; L^{\rho_p}(\Omega))$.}
\]
Again, we prove that
\begin{equation}
\label{post-2}
  \phi_\app (\chi_{M_k,\mu}) \to \phi_\app (\chi_{\app}) \quad
   \text{in $L^2 (0,T;H)$}
\end{equation}
and, proceeding as in Section~\ref{s:3.2}, with \eqref{post-2} we
show that $\bar{\alpha}_\mu  =\alpha(w_\mu)$ and
\[
\alpha_{M_k}(w_{M_k,\mu}) \to \alpha(w_\app) \qquad \text{in $L^1
(0,T; L^1 (\Omega))$.}
\]
Having this, we  conclude that the pair $(\chi_\app,w_\app)$ is
solution to the PDE system
\begin{align}
  & \label{1-var-better-mu-bis}
   \chi_t + A (\alpha(w)) =0 \qquad \aein \  \Omega \times (0,T)\,,\\
 &
 \label{2-var-better-mu-bis} A\chi + \phi_{\app}(\chi)
  = w \qquad \aein \ \Omega \times (0,T)\,,
\end{align}
supplemented with the initial condition~\eqref{app-init}.

\medskip

\noindent%
\underline{Third step}. Finally, we take the limit $\mu\searrow 0$
in (the Cauchy problem for)
\eqref{1-var-better-mu-bis}--\eqref{2-var-better-mu-bis}.
Estimate~\eqref{ff1}, with $\alpha_M$ replaced by $\alpha$,
 holds for the sequence
$\{(\chi_\app,w_\app)\}_\app$ for a constant $C>0$
which is \emph{independent} of the parameter $\mu>0$.

 Furthermore, using the fact that system
\eqref{1-var-better-mu-bis}--\eqref{2-var-better-mu-bis} has  the
same structure as~\eqref{1-var-better}--\eqref{2-var-better}, we
argue as in \eqref{e:more-convenient_form}--\eqref{e:stima3-bis} and
 conclude that
\[
\exists\, C>0 \ \ \forall\, \mu>0\, : \quad \|\chi_{\app}\|_{L^2
(0,T;W^{2,6}(\Omega))} +
 \| \phi_\app
(\chi_\app)\|_{L^2 (0,T; L^6(\Omega))}  \leq C.
\]
 From the bound for
 $(\alpha'(w_\app))^{1/2} \nabla
w_\app$ in $ L^2(0,T;H)$
(which follows from \eqref{stima_g} by applying
once more Ioffe's theorem), developing the very same calculations as
throughout~\eqref{e:crucial0}--\eqref{e:3.18}, we find
\begin{equation}\label{giulio11}
\begin{aligned}
 \exists\, C>0 \ \ \forall\, \mu>0\,: \quad \|
\partial_t \chi_\app \|_{L^{\eta_{p\sigma}}} (0,T;
\sobneg{-2}{{\kappa_p}}) &  +
 \| \alpha(w_\app) \|_{L^{\eta_{p\sigma}} (0,T; L^{\kappa_p}
 (\Omega))} \\ & +  \| \phi_\app(\chi_\app) \|_{L^{\sigma q_\sigma} (0,T;L^{6}(\Omega))} \leq
 C
 \end{aligned}
\end{equation}
(where the indexes $\eta_{p\sigma}$ and $q_\sigma$ are as in
\eqref{e:index-notation} and \eqref{e:crucial}, respectively).

Thanks to the above estimates, we conclude that  there exist a
vanishing sequence $\app_k \searrow 0$ and functions $(\chi,w)$
satisfying~\eqref{reg-chi}--\eqref{reg-w0} such that
$(\chi_{\app_k}, w_{\app_k})$ converges to $(\chi,w)$ in the
topologies of~\eqref{e:conv-chi-strong}--\eqref{e:conv-alpha-weak}.
We then pass to the limit  as $k \to +\infty$ in~\eqref{basta} and,
also in view of~\eqref{app-init}, infer that $\chi$ complies with
the initial condition~\eqref{e:init-cond}.
 Furthermore, we deduce
 from the strong convergence of $\chi_{\mu_k}$ to $\chi$ in $L^2
(0,T;H)$ that $\chi_{\mu_k} \to \chi$ almost everywhere in $\Omega
\times (0,T)$. Using the uniform convergence~\eqref{e:uniformly-cpt}
of $\{\phi_{\mu_k}\}$ to $\phi$, we infer that
\[
 \phi_{\mu_k} (\chi_{\mu_k}(x,t)) \to \phi(\chi(x,t)) \quad \foraa\, (x,t)
 \in \Omega \times (0,T).
\]
Then, taking into account
 the uniform integrability of $\{ \phi_\app
(\chi_{\mu_k})\}$ in $L^2 (0,T;H)$ (which follows from
\eqref{giulio11}, noting that $\sigma q_\sigma>2$), in view of
Theorem~\ref{t:ds} we obtain
 \begin{equation}
\label{step-fundamental}
  \phi_{\mu_k}(\chi_{\mu_k}) \to
\phi(\chi) \qquad \text{in $L^2 (0,T;H)$.}
\end{equation}
  Then,  we again argue as
in Section~\ref{s:3.2} and use \eqref{step-fundamental}
 to  prove that
\[
\alpha(w_{\mu_k}) \to \alpha(w) \qquad \text{in $L^1 (0,T; L^1
(\Omega))$.}
\]
Having this, we  conclude that the pair $(\chi,w)$ is solution to
Problem~\ref{p:1}, which finishes the proof. \fin



\end{document}